\documentclass[11pt]{amsart}
\usepackage{amssymb}
\usepackage{amsfonts}
\usepackage{amsmath}
\usepackage{amsthm}
\usepackage{dsfont}
\usepackage{mathtools}
\usepackage{geometry}
\usepackage[colorlinks=true,linkcolor=blue,urlcolor=blue]{hyperref}
\mathtoolsset{showonlyrefs}
\usepackage{natbib}
\usepackage{float}
\usepackage{comment}
\newtheorem{theorem}{Theorem}[section]
\newtheorem{remark}[theorem]{Remark}
\newtheorem{definition}[theorem]{Definition}

\newtheorem{lemma}[theorem]{Lemma}
\newtheorem{proposition}[theorem]{Proposition}
\newtheorem{corollary}[theorem]{Corollary}

\newcommand{\ud}{\mathrm d}
\newcommand{\cA}{\mathcal A}

\newcommand{\cC}{\mathcal C}
\newcommand{\cD}{\mathcal D}
\newcommand{\cF}{\mathcal F}
\newcommand{\cG}{\mathcal G}
\newcommand{\cJ}{\mathcal J}
\newcommand{\cL}{\mathcal L}
\newcommand{\cT}{\mathcal T}
\def\R{{\mathbb R}}

\def\e{{\mathrm e}}
\def\P{{\mathsf P}}

\def\E{{\mathsf E}}
\newcommand{\N}{\mathbb N}
 
\DeclareMathOperator*{\essinf}{ess\,inf}
\renewcommand{\mid}{|}
 
\geometry{hmargin=3.0cm, vmargin=2.5cm}
\numberwithin{equation}{section}
\usepackage{enumerate}

\title[A quickest detection problem with false negatives]{A quickest detection problem with false negatives}
\author[T.\ De Angelis, J.\ Garg, Q.\ Zhou]{Tiziano De Angelis \and Jhanvi Garg \and Quan Zhou}
\keywords{quickest detection,  progressive enlargement of filtrations, recursive optimal stopping, optimal multiple stopping, free boundary problems}
\thanks{{\em Mathematics Subject Classification 2020}: 60G40, 35R35, 60G35}
\address{T.\ De Angelis: School of Management and Economics, Dept.\ ESOMAS, University of Torino, Corso Unione Sovietica, 218 Bis, 10134, Torino, Italy; Collegio Carlo Alberto, Piazza Arbarello 8, 10122, Torino, Italy.}
\email{\href{mailto:tiziano.deangelis@unito.it}{tiziano.deangelis@unito.it}}
\address{J.\ Garg: Department of Statistics, Texas A\&M University, 3143 TAMU, College Station, TX 77843.}
\email{\href{mailto:gargjhanvi@stat.tamu.edu}{gargjhanvi@stat.tamu.edu}}
\address{Q.\ Zhou: Department of Statistics, Texas A\&M University, 3143 TAMU, College Station, TX 77843.}
\email{\href{mailto:quan@stat.tamu.edu}{quan@stat.tamu.edu}}

\date{\today}
\numberwithin{equation}{section}

\usepackage{enumerate}
\begin{document}
\begin{abstract}
We formulate and solve  a variant  of the quickest detection problem  which features false negatives. A standard Brownian motion acquires a drift at an independent exponential random time which is not directly observable. Based on the observation in continuous time of the sample path of the process, an optimizer must detect the drift as quickly as possible after it has appeared. The optimizer can inspect the system multiple times upon payment of a fixed cost per inspection. If a test is performed on the system {\em before} the drift has appeared then, naturally, the test will return a negative outcome. However, if a test is performed {\em after} the drift has appeared, then the test may fail to detect it and return a false negative with probability $\epsilon\in(0,1)$. The optimization ends when the drift is eventually detected. The problem is formulated mathematically as an optimal multiple stopping problem, and it is shown to be equivalent to a recursive optimal stopping problem. Exploiting such connection and free boundary methods we find explicit formulae for the expected cost and the optimal strategy. We also show that when $\epsilon = 0$ our expected cost  is an affine transformation of the one in Shiryaev's classical optimal detection problem  with a rescaled model parameter.
\end{abstract}

\maketitle

\section{Introduction}
 \subsection{Classical quickest detection}

The classical quickest detection problem can be stated as follows. Let $B$ be a Brownian motion and $\theta$ be an independent exponentially distributed random variable. An optimizer observes the sample path of a stochastic process $X$ whose evolution is described by
\begin{equation}\label{eq:X-dynamics}
 \ud X_t=\left\{
\begin{array}{ll}
\ud B_t, & \text{ for } t\in[0,\theta),\\
\mu \ud t +\ud B_t, & \text{ for }  t\in[\theta,\infty).
\end{array}
\right.
\end{equation}
Here $\mu\in\R$ is known to the observer but $\theta$ and $B$ are not directly observable,  and the aim is to detect the appearance of the drift $\mu$ as quickly as possible.
This problem was originally motivated by the operation of radar systems, where $X$ models the real-time signal emitted by the radar (cf.\ Shiryaev \cite{shiryaev2010quickest} for a detailed historical account of the genesis of this problem and its various formulations).
More generally, {by Donsker's theorem}, model \eqref{eq:X-dynamics} can be viewed as a continuous-time approximation  {to the partial sums} of a discrete-time sequence of independent observations whose mean changes at an unknown time point.
{For that reason, \eqref{eq:X-dynamics} is often treated as a canonical model in the quickest detection literature.
Numerous applications have been found} in real-world change-point detection scenarios; see, e.g., the monographs by Poor and Hadjiliadis  \cite{poor2008quickest} and by Tartakovsky {\rm et al.} \cite{tartakovsky2014sequential}, {and the recent work by Johnson and Pedersen~\cite{johnson2025bayesian} on the application to epidemic models.}

Mathematically, the observer has access to the filtration $(\cF^X_t)_{t\ge 0}$ generated by the process $X$, i.e., $\cF^X_t=\sigma(X_s,\ s\le t)$.
In the classical setting, the observer chooses {\em one} $\cF^X_t$-stopping time $\tau$ in order to minimize a cost function
\begin{equation}\label{eq:classical-cost}
   \E[  \alpha 1_{\{ \tau<\theta \}} + \beta  (\tau-\theta)^+]
=  \alpha   \P(\tau<\theta) +  \beta\E[(\tau-\theta)^+],
\end{equation}
where $\P$ is a suitable probability measure on a suitable sample space $(\Omega, \cF)$,  $\alpha>0$ measures the cost of  a false alarm, and $\beta > 0$ measures the cost of delay in detection.
The literature on this class of problems is very vast and it dates back to Shiryaev's work in the 1960s (e.g., \cite{shiryaev1961a}, \cite{shiryaev1963}), where the problem above was formulated and solved for the first time. The finite-horizon version of the problem was studied by Gapeev and Peskir \cite{GP2006}, general one-dimensional diffusions were considered by Gapeev and Shiryaev \cite{gapeev2013bayesian} and, in particular, quickest detection for Bessel and Ornstein-Uhlenbeck processes was studied
by Peskir with Johnson~\cite{johnson2017quickest} and with Glover~\cite{PG2022}, respectively.
Recently, Peskir and Ernst \cite{ernst2022quickest} studied the detection of a drift in a coordinate of a multi-dimensional Brownian motion. Further references on this subject and the related ``disorder problem'' can be found in \cite[Chapter VI]{peskir2006optimal} and in Shiryaev's survey paper \cite{shiryaev2010quickest}.

\subsection{Motivating examples for our study}
To motivate our study and explain how it extends the classical quickest detection problem, it is convenient to think of the process $X$ as a signal emitted by a system and of $\theta$ as the time at which the operating mode of the system changes (e.g., because of a mechanical failure or because of an external disturbance).
The interpretation of the payoff in \eqref{eq:classical-cost} is as follows: at time $\tau$ the appearance of a drift is ``declared''; if $\tau<\theta$ the cost of a ``false alarm'' is $\alpha$, whereas if $\tau>\theta$ the cost of a ``delayed alarm'' is $\beta(\tau-\theta)$.
In order to calculate such costs, some sort of test must be performed on the system that reveals without error whether the drift has appeared or not.
Upon observing a negative outcome from the test,
in some applications one resumes monitoring the system until the next stopping decision is made
(e.g., in the radar operation problem discussed in~\cite{shiryaev2010quickest}).

In reality, a ``perfect'' inspection of the system is often impossible, particularly when the underlying disorder is challenging to identify, due to the possibility of {\em false negatives} resulting from the test.
In those cases, it makes sense to allow multiple (costly) inspections of the system.
Problems of this kind naturally arise in the healthcare context, where we may think of $X$ as an indicator of the general health condition of an individual and of $\theta$ as the time of the onset of some disease. The appearance of the drift $\mu$ in the dynamics of $X$ models the appearance of some symptoms, which may be difficult to detect at first or perhaps could be mistaken for some normal tiredness or stress. Indeed, it is well known
that the detection of complex diseases, like cancer in early stages, may require multiple tests and false negatives are not uncommon
(see, e.g., Bartlett {\rm et al.}~\citep{bartlett2021false} and Verbeek {\rm et al.}~\citep{verbeek2018acceptable}). Another example  is suggested by the extensive use of COVID-19 tests that we witnessed in recent years.
In that case, early symptoms could vary wildly across different individuals, and  it is not easy to tell them apart from those of a normal cold. Testing was key but the false negative rate of the PCR test can be as high as 10\%, according to Kanji {\rm et al.}~\citep{kanji2021false}.
{It is worth noticing that in our model the process $X$ is observed continuously and for free, whereas inspections are costly. This simplification is in line with the fact that many individuals have in their homes simple devices that allow to check several health indicators on a daily basis (e.g., body temperature, blood pressure, blood oxygen levels, etc.) Such devices can be purchased for a nearly negligible cost, and they last for decades --- thus we can think of them as ``for free''. On the contrary, COVID tests are single-use and generally not available for free. Likewise, testing for cancer or other complex diseases requires expensive equipment which is only available in hospitals or specialized clinics.}

{Quickest detection with imperfect inspections}
could also cover situations in which a device positioned in a remote location sends signals in continuous time to headquarters {(e.g., a measuring device installed on a satellite in space). The disorder time corresponds to the mechanical failure of one or more components in the device and it may be detected via a change in the form of the signal.}
{However, the latter may also indicate a temporary malfunctioning of the device (due to, e.g., a so-called ``single-event upset'', caused by ionizing particles striking the circuits of the device)}.
{An inspection corresponds to a remote reboot of the device and subsequent observation of the signal after the reboot. In the case of a temporary malfunctioning the reboot would suffice to restore the normal operating mode---the disorder has not occurred yet. If instead the device suffers from a faulty component, the reboot may still be successful and it may take some time before the headquarters detect new anomalies after the reboot---hence the inspection results in a false negative}.
{Thus, remote inspections are subject to error and may need to be repeated several times before the systematic disorder can be confirmed.}

It turns out that the mathematical literature has not yet considered quickest detection problems in such situations, and this work represents the first attempt in this direction.
For simplicity, we do not consider false positives in our  analysis, but in Section~\ref{sec:disc} we point to several generalizations of our model.
This simplification is not overly restrictive, because in many applications where a statistical hypothesis testing procedure is used to determine $1_{\{\tau < \theta\}}$, the false positive rate is controlled at a minimum level (for example, in the aforementioned work on COVID-19 testing \cite{kanji2021false}, the false positive rate is assumed to be zero).

\subsection{Our model and mathematical contribution}
To account for false negatives, we propose a new variant of the quickest detection problem.
We describe it informally here and we will rigorously formulate the problem in Section~\ref{sec:setting}.
We still assume the observed system's dynamics be modeled by the process $X$ given in~\eqref{eq:X-dynamics}.
Unlike the classical formulation, we assume that at the chosen stopping time $\tau$, the optimizer performs an imperfect inspection on the system. The outcome of such inspection is modeled as a binary random variable $Z \in \{0, 1\}$,  where $Z=1$ means that the drift is detected and $Z=0$ means otherwise. The conditional distribution of $Z$ is given according to  $\P(  Z = 0  \mid  \theta > \tau ) = 1$ and $\P( Z = 0  \mid  \theta \leq \tau  ) =  \epsilon$, where $\epsilon \in [0, 1)$ is referred to as the false negative rate and assumed known.
{Each inspection has a fixed cost,} and thus the optimizer wants to wisely choose when to perform the inspection.
{Without loss of generality, we set the cost of each inspection to $1$, as it will be clear from our objective function~\eqref{eq:our-cost} that any other value would merely rescale the parameter $\beta$.}
Upon  observing $Z = 0$ (which will be referred to as a {\em negative} outcome), the optimizer suitably adjusts her posterior belief of the presence of a drift, before resuming the observation of the system and choosing the next stopping time.
If $Z = 1$ (i.e., a {\em positive} outcome),  then the drift is successfully detected, and the optimization ends.
Letting $N$ denote the number of tests needed to detect the drift and $\tau_N$ denote the $N$th stopping time, the cost function to be minimized in our problem is
\begin{equation}\label{eq:our-cost}
\E[  N +   \beta(\tau_N - \theta)^+  ].
\end{equation}
Compared to the classical cost function \eqref{eq:classical-cost}, which penalizes false alarms directly through the false alarm probability,
our cost~\eqref{eq:our-cost} penalizes them using $\E[N]$.
{This enables us to explicitly model the effect of false negative outcomes from the inspections and to characterize how the optimal strategy depends on the false negative rate $\epsilon$  (via the positioning of the inspection threshold and the size of the jump in the posterior process).}
One key difference between our model formulation and the classical one is that in the classical quickest detection the optimization ends at the first time the alarm is declared, irrespective of whether or not the drift has actually appeared.
In our formulation, instead, the optimization ends when the drift is detected for the first time, as a result of one of many possible inspections.
Other versions of the problem,  for example those that
combine ``inspecting the system'' and ``declaring the alarm,'' are discussed in Section~\ref{sec:disc}.
Although in this work we primarily focus on how to minimize the cost function~\eqref{eq:our-cost}, in Section~\ref{sec:disc} we also discuss how to further generalize this problem to allow for false positives.

At the mathematical level the problem is formulated as an optimal multiple stopping problem (in the spirit of Kobylanski {\rm et al.}\ \cite{kobylanski2011optimal})
combined  with a progressive enlargement of filtrations  occurring at the stopping times when the inspections are performed.
Moreover, at the stopping times when the system's inspection returns a negative outcome, {\em jumps} are introduced to the posterior's dynamics.
To solve the problem, we show its equivalence to a recursive optimal stopping problem (somewhat in the spirit of Colaneri and De Angelis \cite{colaneri2021class}). The recursive problem has a natural link to a free boundary problem with recursive boundary conditions at the (unknown) optimal stopping boundary. We are able to solve explicitly the free boundary problem and fully identify both the optimal sequence of stopping times and the optimal cost in the quickest detection problem.  Our results are summarized in Theorem~\ref{thm:main}.

The structure of the mathematical problem is completely different from the one in the classical quickest detection framework. As a result, we obtain optimal strategies that cannot be derived simply by iterating the ones from Shiryaev's original work.
However, when $\epsilon = 0$, we are able to characterize a linear relationship between our expected cost  and that in Shiryaev's classical optimal detection problem, up to a scaling of the cost coefficient $\beta$, and show the equivalence between the two optimal policies.
We would like to emphasize that the explicit solution of a recursive optimal stopping problem is a significantly more challenging task than the solution of its non-recursive counterpart, and we believe it has never been accomplished in the literature. In particular,  Colaneri and De Angelis \cite{colaneri2021class} and Carmona and Touzi \cite{carmona2008swing} (see \cite[Section 4]{carmona2008swing}) deal, for different reasons, with optimal stopping problems with recursive structure but neither of them obtains an explicit formula for the optimal strategy or for the value function.

Recent work by Bayraktar {\rm et al.}\ \cite{BEG2022} considers a closely related quickest detection problem with costly observations. Unlike our setting, the optimizer in \cite{BEG2022} does not observe the sample path of $X$ in continuous time but only at discrete (stopping) times, upon paying a fixed cost per observation (in Bayraktar and Kravitz \cite{bayraktar2015quickest} the optimizer can make $n$ discrete observations at no cost).
Based on such discrete observations of the sample path, the optimizer must choose when to declare the appearance of the drift and, at that point, the optimization terminates and it is revealed whether $\theta$ has occurred or not. The key conceptual difference is that in our setup the ``observation'' of the system is for free but the ``inspection'' of the presence of a drift is costly, with limited accuracy and it can be repeated multiple times. In the setup of \cite{BEG2022} the observation of the system is costly and its close inspection can be performed only once but with perfect accuracy. The methodology in \cite{BEG2022} relies on an iterative construction of a fixed point for the cost function. A  characterization of an  optimal sequence of observation times is given in terms of a continuation region that cannot be found explicitly and whose geometry is investigated only numerically.
For completeness, we also mention that quickest detection problems with costly observations of the increments of $X$ were considered by
Antelman and Savage \cite{antelman1965surveillance}, Balmer \cite{B75} and Dalang and Shiryaev \cite{DS15}. The cost depends on the length of the observation, and the problems take the form of stochastic control with discretionary stopping; that leads to a framework completely different from ours.

{Finally, we observe that the solution to our problem  resembles the multi-stage sequential testing rules studied in Shiryaev~\cite{shiryaev1961a}, which are also surveyed in Shiryaev~\cite{shiryaev2010quickest}.
Sequential testing can be viewed as a special case of quickest detection where $\theta$ equals either $0$ or $\infty$. In the multi-stage setting, the same stopping rule  (designed for the single-stage problem) is applied repeatedly, and the stopping boundary can be chosen to optimize a long-term performance criterion, such as the expected detection delay subject to a fixed mean time to the first false alarm. The reset of the posterior process in this context is part of the problem formulation, hence exogenous. Our problem is fundamentally different:
the objective function \eqref{eq:our-cost} is not a concatenation of single-stage optimizations, because from outset our agent takes into account the occurrence of false negatives and she looks for a stopping rule that minimizes ``globally'' the cost criterion. That leads to  a recursive structure of the problem in which each negative inspection generates {\em endogenously} a downward jump in the posterior, and the optimization continues within the same problem.}

\subsection{Structure of the paper}

Our paper is organized as follows. In Section~\ref{sec:setting} we introduce the mathematical model in its optimal multiple stopping formulation, with a  progressively  enlarging filtration, and we describe the dynamics of the posterior process (which includes jumps). In Section~\ref{sec:mainresults} we state our main result (Theorem~\ref{thm:main}) containing the explicit form of the value function of our problem and the optimal multiple stopping rule.
In Section~\ref{sec:recursive}, we define a suitable recursive optimal stopping problem and we show the equivalence between the original optimal multiple stopping problem and the recursive one. In Section~\ref{sec:solution} we obtain the explicit solution of the recursive problem via the solution of a free boundary problem with recursive boundary conditions at the free boundary.
In Section~\ref{sec:solQDP} we give the formal proof of our main result. We discuss properties of the optimal strategy and illustrate its dependence on various model parameters also with the support of a detailed numerical study.  Section~\ref{sec:disc} contains a short discussion about possible generalizations of our model.  The paper is completed by a technical Appendix.

\section{Setting}\label{sec:setting}

We consider a probability space $(\Omega, \cF,\P_\pi)$ sufficiently rich to host a Brownian motion $(B_t)_{t\ge 0}$, a random variable $\theta\in[0,\infty)$ and an infinite sequence of i.i.d.\ random variables $(U_k)_{k=1}^\infty$ with $U_1\sim U(0,1)$, the uniform distribution on $[0,1]$. The process $(B_t)_{t\ge 0}$ and the random variables $\theta$ and $(U_k)_{k=1}^\infty$ are mutually independent. The law of $\theta$ is parameterized by $\pi\in[0,1]$ and we have $\P_\pi(\theta=0)=\pi$ and $\P_\pi(\theta>t|\theta>0)=\e^{-\lambda t}$ for $\lambda>0$. In keeping with the standard approach to quickest detection problems, for any $A\in\cF$
\[
\P_\pi(A)=\pi \P^0(A)+(1-\pi)\int_0^\infty \lambda \e^{-\lambda s}\P^s(A)\ud s,
\]
where $\P^t(A):=\P_\pi(A|\theta=t)$. Given a sub-$\sigma$-algebra $\cG\subset\cF$ and any two events $A,B\in\cF$ we denote $\P_\pi(A|\cG,B)=\P_\pi(A\cap B|\cG)/\P_\pi(B|\cG)$.

On this space we consider the stochastic dynamics
\begin{align*}
X_t = x +\mu(t-\theta)^+ + \sigma B_t, \quad\text{for } t\ge 0,
\end{align*}
where $x\in\R$, $\mu\in\R$, $\sigma>0$ and $(\,\cdot\,)^+$ denotes the positive part. The filtration generated by the sample paths of the process is denoted by $\cF^X_t=\sigma(X_s,\,s\le t)$, and it is augmented with $\P_\pi$-null sets.

Given random times $\rho\le \tau$, we denote open and closed intervals respectively by $(\!(\rho,\tau)\!)$ and $[\![\rho,\tau]\!]$. Given a filtration $(\cG_t)_{t\ge 0}$, we denote by $\cT(\cG_t)$ the class of stopping times for that filtration.
Let $\N$ be the set of natural numbers excluding zero. We say that a sequence $(\tau_n)_{n=0}^\infty$ of random times is {\em increasing} if:
\begin{itemize}
\item[(i)] $\tau_0=0$ and $\tau_n$ is $\P_\pi$-a.s.\ finite for all $n\in\N$,
\item[(ii)] $\tau_n\le \tau_{n+1}$ for all $n\in\N$ and
\begin{align}\label{eq:taun-lim}
\lim_{n\to\infty}\tau_n=\infty,\quad\text{$\P_\pi$-a.s}.
\end{align}
\end{itemize}
Given an increasing sequence $(\tau_n)_{n=0}^\infty$ we introduce random variables $(Z_n)_{n=0}^\infty$ which will be used to model the output of the tests that the optimizer runs on the system for the detection of $\theta$. More precisely, we set $Z_0=0$ and, for $k\in\N$ and $\epsilon\in[0,1)$, we define
\begin{align}\label{eq:Zk}
Z_k=
\left\{
\begin{array}{ll}
1, & \text { if $\theta\le\tau_{k}$ and $U_k\in[0, 1-\epsilon]$}, \\[+4pt]
0, & \text {  otherwise.}
\end{array}
\right.
\end{align}
It is important to notice that the sequence $(Z_n)_{n=0}^\infty$ depends on the increasing sequence $(\tau_n)_{n=0}^\infty$, so we should actually write  $Z_{n}=Z_n(\tau_n)$. However, we prefer to keep a simpler notation as no confusion shall arise.

Given an increasing sequence $(\tau_k)_{k=0}^\infty$ and the associated sequence $(Z_k)_{k=0}^\infty$, we define the processes
\[
C_t:=\sum_{i=0}^\infty 1_{\{t\ge \tau_i\}}\quad\text{and}\quad Y_t:=\sum_{i=0}^\infty Z_i 1_{\{t\ge \tau_i\}},
\]
and their natural filtration $\cF^{C,Y}_t=\sigma((C_s,Y_s),s\le t)$. The process $C$ is the counter of the number of tests, whereas the process $Y$ acts as a counter of the number of {\em positive} tests.  Letting $\cG^0_t=\cF^X_t$, we define recursively a  progressive  enlargement of the initial filtration by  setting
\[
\cG^{k}_t:=\cG^{k-1}_t\vee\sigma(Z_k 1_{\{s\ge \tau_k\}},s\le t)\vee\sigma(1_{\{s\ge \tau_k\}},s\le t),\quad\text{for $k\in\N$}.
\]
Notice that $(\cG^\infty_t)_{t\ge 0}$ is the filtration generated by the triplet $(C_t,X_t,Y_t)_{t\ge 0}$ with $\cG^\infty_t=\cF^X_t\vee\cF^{C,Y}_t$. Moreover, $\cG^k_t=\cF^X_t\vee\cF^{C,Y}_{t\wedge\tau_k}$ for every $k\in\N$.

By construction, each $\tau_k$ is a $(\cG^k_t)_{t\ge 0}$-stopping time. However, for the formulation of our optimization problem we further restrict the class of increasing sequences as explained in the next definition.

\begin{definition}[{\bf Admissible sequence}]\label{def:admis} We say that an increasing sequence $(\tau_n)_{n=0}^\infty$ of random times is {\em admissible}  if $\tau_k\in\cT(\cG^{k-1}_t)$ for each $k\in\N$. We denote by $\cT_*$ the class of admissible sequences of random times.

\end{definition}

Now we are ready to state the quickest detection problem we are interested in. Given an admissible sequence $(\tau_n)_{n=0}^\infty\in\cT_*$, the associated sequence $(Z_n)_{n=0}^\infty$ and the process $(Y_t)_{t\ge 0}$, we set
\begin{align}\label{eq:KY}
N_Y\big((\tau_n)_{n=0}^\infty\big)=N_Y:=\inf\{n\in\N : Y_{\tau_n}=1\}.
\end{align}
Here, $N_Y$ is the number of system inspections until the drift is detected and
\[
\tau_{N_Y}:=\inf\{t\ge 0:Y_t=1\}
\]
is the time at which the drift is detected. It is clear that on $\{N_Y=j\}$ we have $\tau_{N_Y}=\tau_j$. The cost function associated to a sequence $(\tau_n)_{n=0}^\infty\in\cT_*$ is defined as
\begin{align*}
\cJ_\pi\big((\tau_n)_{n=0}^\infty\big):=\E_\pi\Big[N_Y+\beta(\tau_{N_Y}-\theta)^+\Big],
\end{align*}
where $\beta>0$ is a given constant.
The interpretation is as follows: the optimizer pays one unit of capital each time she inspects the system; moreover, she also pays a penalty proportional to the delay with which the drift is detected. The problem ends upon drift detection.

The value function of the optimization problem reads
\begin{align}\label{eq:Vhat-TKY}
\hat V(\pi):= \inf_{(\tau_n)_{n=0}^\infty\in\cT_*}\cJ_\pi\big((\tau_n)_{n=0}^\infty\big).
\end{align}
In order to study this problem, our first step will be to produce a Bayesian formulation of the cost function. This will be done in the next subsection. Before moving on, some comments on the problem formulation are in order.

\begin{remark}[{\bf Intuition about problem formulation}]\label{rem:intuition}
\

\begin{itemize}
\item[(a)] Definition~\ref{def:admis} embodies the idea that along an admissible sequence $(\tau_n)_{n=0}^\infty$, the decision to stop and perform a test on the system can only depend on the observed trajectory of the signal $X$ and on the outputs of {\em past} tests.

\item[(b)] The processes $(C_t)_{t\ge 0}$ and $(Y_t)_{t\ge 0}$ are non-decreasing and constant on every random interval $[\![\tau_k,\tau_{k+1})\!)$. The process $(C_t)_{t\ge 0}$ increases by one unit at each stopping time in the admissible sequence. Instead, the process $(Y_t)_{t\ge 0}$ remains constant  and equal to zero until the first positive test and then again constant between any two subsequent positive tests. Clearly $Y_{\tau_j}\in\cG^{j}_{\tau_j}$ for each $j\in\{0\}\cup\N$.

\item[(c)] Notice that
$\{Z_k=0,\theta\le\tau_k\}=\{U_k>1-\epsilon,\theta\le\tau_k\}$. Since $U_k$ is uniformly distributed and independent of $\cG^{k-1}_\infty$  and $\theta$,  we have
\begin{equation}\label{eq:false.negative}
\P_\pi(Z_k=0\ |\ \cG^{k-1}_{\tau_k},\theta\le \tau_k)=\P_\pi(U_k>1-\epsilon)=\epsilon.
\end{equation}
The above shows that  $\epsilon\in[0,1)$ is precisely the probability that when the optimizer tries to detect the drift at the (stopping) time $\tau_k$, the test returns a false negative.
\end{itemize}

\end{remark}

\subsection{Bayesian formulation}\label{sec:bayesian}
The content of this section is motivated by the following simple observation: the cost function can be expressed as
\begin{align}\label{eq:payoff0}
\begin{aligned}
\E_\pi\Big[N_Y+\beta(\tau_{N_Y}-\theta)^+\Big]  &=\E_\pi\Big[\sum_{j=1}^{N_Y}\Big(1+\beta\int_{\tau_{j-1}}^{\tau_{j}}1_{\{\theta\le s\}}\ud s\Big)\Big] \\
&=\E_\pi\Big[\sum_{j=0}^{\infty}1_{\{Y_{\tau_j}=0\}}\Big(1+\beta\int_{\tau_{j}}^{\tau_{j+1}}1_{\{\theta\le s\}}\ud s\Big)\Big].
\end{aligned}
\end{align}
By definition of an admissible sequence $(\tau_j)_{j=0}^\infty$, we have $\{\tau_{j}\le s\le \tau_{j+1}\}\in\cG^{j}_{s} $ and $Y_{\tau_j}\in\cG^{j}_{\tau_j}$. Then, by tower property we have
\begin{align}\label{eq:sum}
\begin{aligned}
\E_\pi\Big[1_{\{Y_{\tau_j}=0\}}\int_{\tau_{j}}^{\tau_{j+1}}1_{\{\theta\le s\}}\ud s\Big]&=\E_\pi\Big[\int_{\tau_{j}}^{\tau_{j+1}}\P_\pi \big(\theta\le s,\, Y_{\tau_j}=0\big|\cG^j_{s}\big) \ud s\Big] \\
&=\E_\pi\Big[1_{\{Y_{\tau_j}=0\}}\int_{\tau_{j}}^{\tau_{j+1}}\Pi^j_s  \ud s\Big],
\end{aligned}
\end{align}
where in the final expression we set
\begin{align*}
\Pi_{s}^{j} = \P_\pi\big(\theta \leq s | \cG_s^{j}\big),\quad \text{for $s\in[\![\tau_j,\tau_{j+1})\!)$}.
\end{align*}
The latter process is the posterior process after the $j$th test has been performed on the system.  Plugging \eqref{eq:sum} into \eqref{eq:payoff0} we arrive at our Bayesian formulation of the problem:
\begin{align}\label{eq:Vhat}
\hat V(\pi)=\inf_{(\tau_n)_{n=0}^\infty\in\cT_*}\E_\pi\Big[\sum_{j=0}^\infty1_{\{Y_{\tau_j}=0\}}\Big(1+\beta\int_{\tau_{j}}^{\tau_{j+1}}\Pi^j_s\ud s\Big)\Big].
\end{align}

In order to be able to solve the problem in its Bayesian formulation, we need more explicit characterization of the dynamics of the posterior processes $(\Pi^k_t)_{t\ge 0}$ for all $k\in\N\cup\{0\}$.
Let us define the family of posteriors $\{\Pi^0_\tau,\,\tau\in\cT(\cG^0_t)\}$, where $\Pi_\tau^0 :=  \P_\pi (\theta \leq \tau  |  \cG^0_\tau )$.
This family forms a so-called $T$-sub-martingale system \cite[Ch.\ 2.11]{elkaroui1981} and it is easy to check that $t\mapsto \E_\pi[\Pi_t^0]$ is right-continuous. Therefore the family can be aggregated in a c\`adl\`ag sub-martingale process \cite[Prop.\ 2.14]{elkaroui1981}, which we still denote by $(\Pi_t^0)_{t\ge 0}$ with a slight abuse of notation.
It is actually well known that $\Pi_t^0$ satisfies the stochastic differential equation (SDE) \cite[Sec.\ 22]{peskir2006optimal}
\begin{align}\label{eq:SDEpi0}
\Pi^0_t =\pi+\int_0^t \lambda (1 - \Pi^0_s) \ud s + \frac{\mu}{\sigma}
\int_0^t\Pi^0_s (1 - \Pi^0_s) \ud W_s,
\end{align}
where
\begin{align}\label{eq:W}
W_t=\frac{1}{\sigma}\Big(X_t-\mu\int_0^t\Pi^0_s\ud s\Big)
\end{align}
is a standard Brownian motion with respect to $(\cF^X_t,\P_\pi)$.

Next, we are going to consider the families of posteriors $\{\Pi^k_\tau,\,\tau\in\cT(\cG^k_t)\}$, where $\Pi_\tau^k :=  \P_\pi (\theta \leq \tau  |  \cG^k_\tau )$, for any $k\in\N$. By the same arguments as above we can aggregate each family into a stochastic process $(\Pi^k_t)_{t\ge 0}$.  Clearly $(\Pi^k_{t\vee\tau_k})_{t\ge 0}$ is a $(\cG^k_{t\vee\tau_k})_{t\ge 0}$-adapted process and $1_{\{Y_{\tau_k}>0\}}\Pi_t^k=1_{\{Y_{\tau_k}>0\}}$ for $t \in [\![\tau_{k}, \infty)\!)$. Mimicking arguments from \cite[Sec.\ 22]{peskir2006optimal}, we obtain the dynamics of the posterior $\Pi^k$. The result is not entirely trivial due to the adjustment of the posterior process following each attempt to detect the drift. In order to account for the latter feature, we must introduce \begin{align}\label{eq:def-g}
g(\pi):=\frac{\epsilon \pi}{1-(1-\epsilon)\pi},\quad\text{for } \pi\in[0,1].
\end{align}
When necessary we write $g=g_\epsilon$ to emphasise the dependence on $\epsilon$.
\begin{proposition}\label{prop:dynamics}
Let $(\tau_k)_{k=0}^\infty$ be an admissible sequence of stopping times. Then, for $k\in\N$ and $t\ge 0$,
\begin{align}\label{eq:Pik}
\Pi^k_{\tau_k+t} =\Pi^{k}_{\tau_k}+\int_{\tau_k}^{\tau_k+t} \lambda (1 - \Pi^k_s) \ud s +  \frac{\mu}{\sigma} \int_{\tau_k}^{\tau_k+t}\Pi^k_s (1 - \Pi^k_s) \ud W^k_s,
\end{align}
with
\[
\Pi^{k}_{\tau_k}=g(\Pi^{k-1}_{\tau_k})1_{\{Y_{\tau_k}=0\}}+1_{\{Y_{\tau_k}>0\}},
\]
and where, for $s\ge 0$
\[
W^k_{\tau_k+s}-W^k_{\tau_k}=\frac{1}{\sigma}\int_{\tau_k}^{\tau_k+s} \big(\ud X_u-\mu\Pi^k_u\ud u\big)
\]
is a $((\cG^{k}_{\tau_k+s})_{s\ge 0},\P_\pi)$-Brownian motion.
\end{proposition}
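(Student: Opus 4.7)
The plan is to treat the two claims separately: first the jump value $\Pi^k_{\tau_k}$, then the SDE on $[\tau_k,\infty)$. On the event $\{Y_{\tau_k}>0\}$, the definition of the sequence $(Z_j)$ forces $\theta\le\tau_k$, so $\Pi^k_{\tau_k}=1$ trivially, and \eqref{eq:Pik} degenerates to the identity $\Pi^k_{\tau_k+t}\equiv 1$ since the factor $(1-\Pi^k)$ kills both the drift and the diffusion coefficients. On the complementary event $\{Y_{\tau_k}=0\}$, the identity $\Pi^k_{\tau_k}=g(\Pi^{k-1}_{\tau_k})$ is exactly \eqref{eq:Pi.k.tau.k}, already derived in the excerpt via Bayes' rule, \eqref{eq:false.negative} and \eqref{eq:YkGk}.

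For the SDE on $\{Y_{\tau_k}=0\}$, my strategy is to reduce to a shifted instance of the classical Shiryaev filter. The key conditional law to establish is that, given $\cG^k_{\tau_k}$ and $\{Y_{\tau_k}=0\}$, the random time $\theta$ is distributed as a mixture with mass $\Pi^k_{\tau_k}$ on $\{\theta\le\tau_k\}$ and, on the complement, satisfies $\P_\pi(\theta>\tau_k+s\,|\,\cG^k_{\tau_k},\,\theta>\tau_k)=\e^{-\lambda s}$. The latter follows from the memoryless property of the exponential distribution together with the facts that $\tau_k$ is a $(\cG^{k-1}_t)$-stopping time and $(U_j)_{j\le k}$ is independent of $\theta$ and of $B$. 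Combined with the strong Markov property of $B$, which yields independence of the post-$\tau_k$ Brownian increments $(B_{\tau_k+t}-B_{\tau_k})_{t\ge 0}$ from $\cG^k_{\tau_k}$, this means that, conditionally on $\cG^k_{\tau_k}$, the pair formed by the future data stream $(X_{\tau_k+t}-X_{\tau_k})_{t\ge 0}$ and the residual disorder time $(\theta-\tau_k)^+$ has the same joint law as in Shiryaev's classical setup, with the prior $\pi$ replaced by the $\cG^k_{\tau_k}$-measurable random variable $\Pi^k_{\tau_k}$.

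Once this conditional reduction is in place, the derivation of \eqref{eq:Pik} is a direct application of the innovations argument in \cite[Sec.~22]{peskir2006optimal}: one represents $\Pi^k_{\tau_k+t}$ through a Bayes-type formula involving the likelihood ratio built from the increments of $X$ on $[\tau_k,\tau_k+t]$, applies It\^o's formula to read off the drift and diffusion coefficients, and then identifies the innovations process $W^k_{\tau_k+s}-W^k_{\tau_k}=\sigma^{-1}\int_{\tau_k}^{\tau_k+s}(\ud X_u-\mu\Pi^k_u\ud u)$ as a $((\cG^k_{\tau_k+s})_{s\ge 0},\P_\pi)$-Brownian motion via L\'evy's characterisation: its continuous martingale character follows from filtering, and its quadratic variation is $\sigma^2 s$ because $\sigma B$ is the continuous martingale part of $X$.

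The main obstacle, as hinted at by the authors, is justifying this ``restart'' step in the progressively enlarged filtration $(\cG^k_t)_{t\ge 0}$, i.e.\ showing that the extra information in $\cG^k_{\tau_k}$ beyond $\cF^X_{\tau_k}$ (the outcomes $Z_1,\dots,Z_k$ of the previous tests) does not break the Markovian filtering structure. This is ensured by the construction of $(Z_j)$: each $Z_j$ is a deterministic function of $\tau_j$, $\theta$ and the independent uniform $U_j$, so conditioning on $\{Z_1=\dots=Z_k=0\}$ merely updates the marginal conditional distribution of $\theta$ — successively through the maps $\Pi^{j-1}_{\tau_j}\mapsto g_\epsilon(\Pi^{j-1}_{\tau_j})$ — without introducing any spurious dependence between $\theta$ and the future increments of $B$. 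With this conditional independence in hand, the classical Shiryaev filtering derivation carries over from time $\tau_k$ onward with the initial condition $\Pi^k_{\tau_k}$ in place of $\pi$, yielding \eqref{eq:Pik}.
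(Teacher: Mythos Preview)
Your proposal is correct and shares the paper's high-level architecture: establish the jump value $\Pi^k_{\tau_k}$ by Bayes' rule, derive the SDE via the likelihood-ratio/innovations method, and identify $\widetilde W^k$ as Brownian through L\'evy's characterisation. The execution differs. You argue by \emph{conditional reduction}: after checking that, conditionally on $\cG^k_{\tau_k}$ and $\{Y_{\tau_k}=0\}$, the residual disorder time and the future increments of $X$ reproduce the classical Shiryaev structure with prior $\Pi^k_{\tau_k}$, you invoke the classical derivation verbatim. The paper instead works \emph{directly} in the enlarged filtration: it writes $\Pi^k_{t\vee\tau_k}$ and $1-\Pi^k_{t\vee\tau_k}$ as integrals of Radon--Nikodym derivatives $\ud\P^s/\ud\P_\pi|_{\cG^k_{t\vee\tau_k}}$, passes to the likelihood ratio $\Phi^k=\Pi^k/(1-\Pi^k)$, computes it explicitly via the Girsanov density $M_t=\exp(\mu\sigma^{-2}(X_t-X_0)-\tfrac12\mu^2\sigma^{-2}t)$, and applies It\^o's formula. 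For the martingale property of $\widetilde W^k$ the paper gives a short explicit step you only allude to: by tower property one replaces $\mu\Pi^k_r\,\ud r$ with $\mu 1_{\{\theta\le r\}}\,\ud r$ inside $\E_\pi[\,\cdot\,|\cG^k_{\tau_k+s}]$, turning the integrand into $\sigma\,\ud B_r$, and independence of post-$(\tau_k+s)$ Brownian increments from $\cG^k_{\tau_k+s}$ concludes. Your route is conceptually cleaner and isolates why the enlargement is harmless; the paper's is more self-contained and sidesteps the need to formalise the ``restart'' reduction that you correctly flag as the delicate point.
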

\begin{proof}
See Section~\ref{sec:proof.dynamics} in Appendix.
\end{proof}

\begin{remark}We make some observations concerning the dynamics of $\Pi^k$.
\begin{itemize}
\item[(a)] It is shown in the proof of the proposition above that (cf.\ \eqref{eq:gmeaning})
\[
\P_\pi( \theta \leq \tau_k |\cG^{k-1}_{\tau_k}, Z_k=0)=g_\epsilon\big(\Pi^{k-1}_{\tau_k}\big).
\]
Therefore, the function $g_\epsilon(\pi)$ represents the posterior probability that a drift has actually appeared, despite the observation of a negative outcome from a test with prior equal to $\pi$.
\item[(b)] Notice that $0<g_\epsilon(\pi)<\pi$ for every $\epsilon\in(0,1)$,  meaning that the posterior makes a downward jump after each negative test. The size of the jump increases with decreasing $\epsilon$. That is, when there is a small probability of a false negative, a negative test is very informative and it produces a large downward shift in the posterior process. Vice versa, when the probability of a false negative is large, a negative test does not provide much information and it does not significantly change the posterior.
\item[(c)] Recall that $\cG^\infty_t=\cF^X_t\vee\cF^{C,Y}_{t}$ and $\cG^k_t=\cF^X_t\vee\cF^{C,Y}_{t\wedge\tau_k}$ for $t\in[0,\infty)$, for every $k\in\N\cup\{0\}$. Notice also that $t\mapsto\cF^{C,Y}_t$ is constant for $t\in[\![\tau_k,\tau_{k+1})\!)$. Setting
\begin{align}\label{eq:Pi}
\Pi_{t} = \P_\pi\big(\theta \leq t | \cG^\infty_t\big), \quad \text{for $t \in [0, \infty)$},
\end{align}
we have $\Pi_t = \Pi_t^k$ for $t\in[\![\tau_k,\tau_{k+1})\!)$. So the process $(\Pi_t)_{t\ge 0}$ summarizes in a compact form the collection of all posteriors $\{(\Pi^k_t)_{t\ge 0},k\in\N\cup\{0\}\}$,  but in this representation we do not see explicitly the admissible sequence $(\tau_k)_{k=0}^\infty$.
\end{itemize}
\end{remark}

\begin{figure}
\centerline{\includegraphics[width=0.85\linewidth]{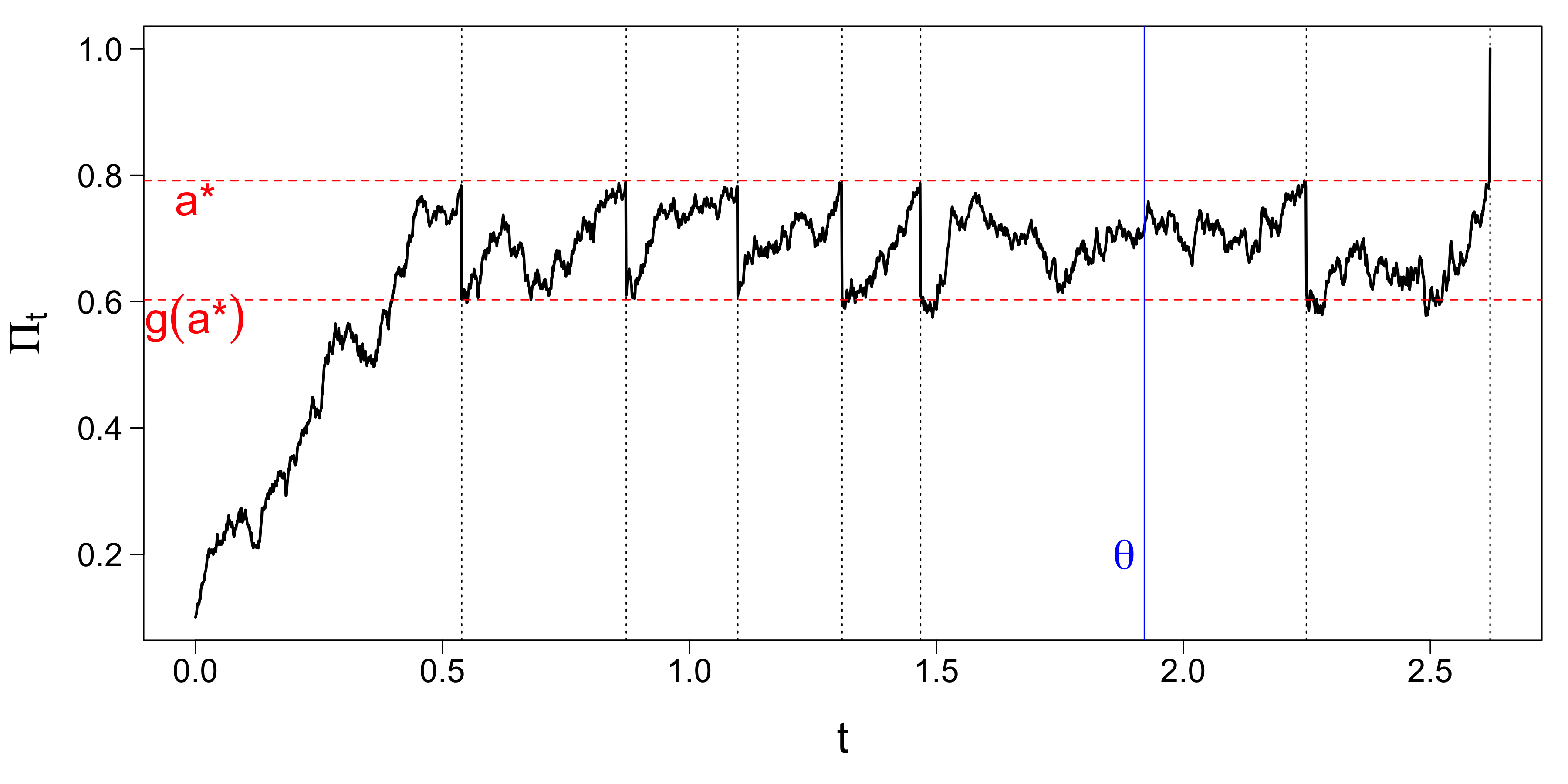}}
\caption{A numerical example of the optimally stopped posterior probability process
    for parameters set to $\lambda = 2$, $\beta = 1.5$, $\mu=1$, $\sigma = 1$, $\epsilon = 0.4$, $\pi = 0.1$.
    The optimizer performs a test on the system whenever the posterior probability hits $a^* = 0.792$.  If the test returns a negative outcome the new posterior becomes $g(a^*)=0.603$. For this simulated example, $\theta = 1.92$ (indicated by the straight vertical line), and in total the observer performs 7 tests (indicated by the vertical dotted lines). In particular, the drift is detected after one false negative. It may be worth noticing that the realization of $\theta$ is about 3 standard deviations away from the mean. That is why several inspections happen before the drift has actually appeared.
    } \label{fig:path}
\end{figure}

\subsection{Main results}\label{sec:mainresults}
The problem in \eqref{eq:Vhat} can be read as an optimal multiple stopping problem of a kind which formally fits within the framework of \cite{kobylanski2011optimal}.
For the reader's convenience, we summarize in this section the main results of the paper and we will later provide a proof in a number of steps presented in subsequent sections.

In order to give a formal statement we need to introduce some notation. Let $\gamma=\mu^2/(2\sigma^2)$ and $\rho =  \lambda /\gamma$, and define the functions
\begin{align}
h(\pi) &=  \rho \left\{ \log \frac{\pi}{1 - \pi}  - \frac{1}{\pi} \right\},\label{eq:def.h}\\
\psi(\pi) &=- \frac{\beta}{\gamma} \e^{- h (\pi)} \int_0^\pi  \frac{   \e^{    h (x)   }}{   x (1 - x)^2 }\ud x, \label{eq:def.psi}
\end{align}
and
\begin{equation}\label{eq:def-Psi}
    \Psi(\pi)=\int_0^\pi\psi(x)\ud x.
\end{equation}
See Remark~\ref{rmk:hitting-Psi} for the interpretation of $\Psi$.
For any $\epsilon \in (0, 1)$, the inverse function $g^{-1} = g^{-1}_\epsilon$ is given by
\begin{equation} \label{eq:def.ginv}
g^{-1}_\epsilon (q) =  \frac{q}{\epsilon +  q(1 - \epsilon)},  \quad \text{ for } q \in [0, 1].
\end{equation}
When $\epsilon = 0$, we define $g^{-1}_0 (q) = \lim_{\epsilon \downarrow 0} g^{-1}_\epsilon(q)$; that is $g^{-1}_0(q) = 0$ if $q = 0$ and $g^{-1}_0(q) = 1$ otherwise.
Clearly, $g(\pi)<\pi$ for $\pi\in(0,1)$ whereas  $g^{-1}(q)> q$ for $q\in(0,1)$. For $k\in\N\cup\{0\}$, let $g^k(q) = (g \circ \ldots \circ g)(q)$ and $g^{-k}(q)=(g^{-1}\circ\ldots\circ g^{-1})(q)$ denote the $k$-fold composition of $g$ and $g^{-1}$ respectively with the convention $g^0(q):=q$ for $q\in(0,1)$. For any $q\in(0,1)$, we will show in Remark~\ref{rem:g-1} that  $g^{-k}(q)<g^{-(k+1)}(q)$ for all $k\in\N\cup\{0\}$ with $g^{-k}(q)\uparrow 1$ as $k\to\infty$.
Finally, for bounded measurable $v:[0,1]\to(0,\infty)$ we denote
\begin{align}\label{eq:cA}
(\cA v)(\pi) := 1 + [1 - (1 - \epsilon) \pi ] v (  g(\pi) )
\end{align}
and we let $\cA^k v=[\cA\circ\ldots\circ\cA]v$ be the $k$-fold application of the operator $\cA$ to $v$.

\begin{theorem}\label{thm:main}
The value function $\hat V$ of the quickest detection problem reads
\begin{align}\label{eq:hatVsol}
\hat V(\pi)=
\left\{
\begin{array}{ll}
\Psi(\pi)+C, & \pi\in[0,a^*),\smallskip\\
\big[\cA^k (\Psi + C)\big] (\pi), & \pi\in [g^{-(k-1)}(a^*),g^{-k}(a^*)),\, k\in\N,
\end{array}
\right.
\end{align}
where the constant $C$ is given by
\begin{equation}
    C = \frac{1}{(1-\epsilon) a^*} + \frac{1 - (1 - \epsilon)a^* }{ (1 - \epsilon)a^* }\Psi( g(a^*) )  - \frac{1}{(1 - \epsilon) a^*} \Psi(a^*),
\end{equation}
and $a^*$ is the unique solution in $(0,1)$ of
\begin{align}
 1 + \Psi(g(a))  - g(a) \psi(g(a))   - \Psi(a)   + a \psi(a) = 0.
\end{align}

The sequence $(\tau^*_n)_{n=0}^\infty$ given by:
\begin{align*}
\tau^*_0=0\quad\text{and}\quad\tau^*_{n}=\bar\tau_n 1_{\{Y_{\tau^*_{n-1}}=0\}}+(\tau^*_{n-1}+1) 1_{\{Y_{\tau^*_{n-1}}>0\}},\quad\text{for $n\in\N$,}
\end{align*}
with
$\bar \tau_{n} = \inf\{t \geq  \tau^*_{n-1}:  \Pi^{n-1}_{t}\ge a^*\}$ is admissible and optimal for $\hat V$, in the sense that $\hat V(\pi)=\cJ_\pi((\tau^*_n)_{n=0}^\infty)$ for all $\pi\in[0,1]$.
\end{theorem}

The theorem gives an explicit formula for the value function of our problem (up to determining $a^*$ as the unique root of an algebraic equation) and an optimal strategy for the stopper. It is optimal to perform a test on the system each time the posterior process reaches the upper boundary $a^*$. If the test returns a negative value, the posterior is adjusted to the new value $g(a^*)$ according to the dynamics in Proposition~\ref{prop:dynamics}. The drift is detected at $\tau^*_{N_Y}$, and the subsequent stopping times are simply chosen after each time unit. The latter choice is arbitrary, and it is only needed to guarantee $\tau^*_n\to\infty$ as $n\to\infty$. Using the dynamics in \eqref{eq:Pik}, in Fig.~\ref{fig:path} we give a numerical example for the process $(\Pi_t)_{t\ge 0}$ defined in \eqref{eq:Pi} when the optimizer implements the optimal strategy from the theorem above.

The proof of the theorem is obtained in several steps in the following three sections and it is formally presented in the proof of Theorem~\ref{cor:V}. In the process we also obtain continuous differentiability of the value function $\hat V$ on $(0,1)$ and its continuity on $[0,1]$. These properties are also stated in Theorem~\ref{cor:V}.

The strategy of proof relies on two main steps: first, in the next section we show that the problem in \eqref{eq:Vhat-TKY} is equivalent to a recursive optimal stopping problem (cf.\ Proposition~\ref{prop:recur}); then, in Section~\ref{sec:solution} we solve the recursive problem with free boundary methods. Finally, in Theorem~\ref{cor:V} we obtain the results stated in the theorem above.

We end this section with two useful observations.
\begin{remark} \label{rem:g-1}
For any $\epsilon \in [0, 1)$ and  $a \in (0, 1)$ let $J_0=[0,a)$ and $J_k=[g^{-(k-1)}(a),g^{-k}(a))$. We observe that $\cup_{k=0}^\infty J_k=[0,1)$. If $\epsilon = 0$, the claim is trivial, since we have $J_1 = [a, 1)$.
If $\epsilon \in (0, 1)$, we need to show that $g^{-k}(a) \rightarrow 1$ as $k \rightarrow \infty$.
Since $g^{-1} (q) \in (q, 1)$ for any $q \in (0, 1)$, the sequence $(g^{-k}(a))_{k\in\N}$ is increasing. Then, $L  := \lim_{k \rightarrow \infty} g^{-k}(a)$ exists and it must satisfy $L  = g^{-1}(L)$. Solving the equation we get $L = 1$.

\end{remark}
\begin{remark}\label{rmk:hitting-Psi}
Let $\cL$ denote the infinitesimal generator of the process $\Pi_t^0$, which has dynamics given by~\eqref{eq:SDEpi0}. That is,
\begin{equation}\label{eq:inf-generator}
    (\cL f)(\pi):=\lambda(1-\pi)f'(\pi)+\gamma\pi^2(1-\pi)^2f''(\pi),
\end{equation}
where $\gamma=\mu^2/ (2 \sigma^2)$ and $f$ is a sufficiently smooth function.
Let $\Pi_t^0$ have initial value $\Pi_0^0 = \pi$.
Since the function $\Psi$ given by~\eqref{eq:def-Psi} satisfies  $(\cL\Psi)(\pi)+\beta\pi=0$, a routine argument using It\^{o}'s lemma shows that, for any $ a \in [\pi, 1)$,
\begin{equation}\label{eq:hitting-Psi}
  \Psi(\pi) - \Psi(a)
  = \E \Big[\beta\int_0^{\tau_a} \Pi_t^0 \ud t\Big],
\end{equation}
where  $\tau_a=\inf\{t\ge 0 \colon \Pi_t^0\ge a\}$.
This provides an alternative approach to quickly computing the value function $\hat{V}$ for the optimal strategy.  Assume $\pi < a^*$. By the calculations in \eqref{eq:YkGk} we have, for $j\in\N$,
\begin{align*}
\P_\pi(Y_{\tau^*_j}=0)&=\E_\pi\Big[1_{\{Y_{\tau^*_{j-1}}=0\}}\P_\pi\big(Z_j=0\big|\cG^{j-1}_{\tau^*_j}\big)\Big]\\
&=\E_\pi\Big[1_{\{Y_{\tau^*_{j-1}}=0\}}\big(1\!-\!(1\!-\!\epsilon)\Pi^{j-1}_{\tau^*_j}\big)\Big]=\big(1\!-\!(1\!-\!\epsilon)a^*\big)\P_\pi\big(Y_{\tau^*_{j-1}}=0\big).
\end{align*}
Iterating the argument we deduce $\P_\pi(Y_{\tau^*_j}=0)=(1\!-\!(1\!-\!\epsilon)a^*)^j$, i.e., at each optimal stopping time $\tau_j^*$  the optimization ends with probability $(1 - \epsilon) a^*$. A routine calculation using \eqref{eq:hitting-Psi} and tower property yields that, for $\pi<a^*$,
\begin{align}\label{eq:hatV-alter}
\begin{aligned}
    \hat{V}(\pi) &=  \sum_{j=0}^\infty \P_\pi\big(Y_{\tau^*_j}=0 \big)
+ \E_\pi \Big[   \beta\int_0^{\tau^*_{1}}\Pi^0_s\ud s \Big]+ \E_\pi \Big[ \sum_{j=1}^\infty 1_{\{Y_{\tau^*_j}=0\}}  \beta \int_{\tau^*_{j}}^{\tau^*_{j+1}}\Pi^j_s\ud s \Big] \\
&=  \frac{1}{(1-\epsilon) a^*} + \left\{ \Psi(\pi) - \Psi(a^*) \right\}+ \E_\pi \Big[ \sum_{j=1}^\infty 1_{\{Y_{\tau^*_j}=0\}}  \E_\pi\Big[\beta \int_{\tau^*_{j}}^{\tau^*_{j+1}}\Pi^j_s\ud s\Big|\cG^\infty_{\tau^*_j} \Big] \Big]\\
&=  \frac{1}{(1-\epsilon) a^*} + \left\{ \Psi(\pi) - \Psi(a^*) \right\} +  \frac{1 - (1 - \epsilon)a^* }{ (1 - \epsilon)a^* }
 \left\{ \Psi( g(a^*) )  -  \Psi(a^*)\right\},
 \end{aligned}
\end{align}
where we used that on $\{Y_{\tau^*_j}=0\}$ it holds
\[
\E_\pi\Big[\beta \int_{\tau^*_{j}}^{\tau^*_{j+1}}\Pi^j_s\ud s\Big|\cG^\infty_{\tau^*_j} \Big]=\E_{g(a^*)}\Big[\beta\int_0^{\tau_{a^*}}\Pi^0_t\ud t\Big].
\]
The right-hand side of \eqref{eq:hatV-alter} coincides with the expression given in Theorem~\ref{thm:main}.
In particular, one needs on average $[(1-\epsilon)a^*]^{-1}$ inspections to eventually detect the drift.
\end{remark}

\section{A recursive problem and its equivalence with the quickest detection}\label{sec:recursive}

In this section we introduce a recursive problem and we prove its equivalence to the Bayesian formulation  in \eqref{eq:Vhat}. The recursive problem is stated here:
\medskip

\noindent{\bf Problem [R].}
{\em Let $(\bar \Omega,\bar \cF,\bar \P)$ be a probability space equipped with a Brownian motion $(\bar W_t)_{t\ge 0}$ and let $\bar \Pi_0$ be a random variable independent of $\bar W$. Let $(\bar \Pi_t)_{t\ge 0}$ be the solution of
\begin{align}\label{eq:Pibar}
\bar \Pi_t =\bar \Pi_0+\int_{0}^t \lambda (1 - \bar \Pi_s) \ud s + \frac{\mu}{\sigma} \int_{0}^t\bar \Pi_s (1 - \bar \Pi_s) \ud \bar W_s,
\end{align}
and denote its natural filtration by $(\bar \cF_t)_{t\ge 0}$ (augmented with the $\bar \P$-null sets). Find a bounded measurable function $V: [0, 1] \rightarrow (0, \infty)$ which satisfies the recursive formula
\begin{equation}\label{eq:def.v}
V(\pi) = \inf_{\tau\in\cT(\bar \cF_t)} \bar \E_\pi \Big[\beta\int_0^\tau \bar \Pi_t\ud t  + \cA V \big(\bar \Pi_\tau\big)\Big],
\end{equation}
where
$\cT(\bar\cF_t)$ is the class of a.s.\ finite stopping times for the filtration $(\bar \cF_t)_{t\ge 0}$ and
$\bar\E_\pi[\cdot]$ is the expectation under the measure $\bar \P_\pi(\,\cdot\,)=\bar \P(\,\cdot\,|\bar\Pi_0=\pi)$. \hfill$\square$}

\medskip

Now we prove existence of a solution of {\bf Problem [R]}.

\begin{proposition}There exists a solution of {\bf Problem [R]}.

\end{proposition}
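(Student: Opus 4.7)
The plan is to recast \textbf{Problem [R]} as a fixed-point equation for the operator
\begin{equation*}
(\cJ v)(\pi) := \inf_{\tau\in\cT(\bar \cF_t)}\bar \E_\pi\Big[1+\beta\int_0^\tau\bar \Pi_t\,\ud t+\bigl(1-(1-\epsilon)\bar \Pi_\tau\bigr)v\bigl(g_\epsilon(\bar \Pi_\tau)\bigr)\Big],
\end{equation*}
acting on the cone of non-negative bounded Borel functions on $[0,1]$. This operator is monotone in $v$ and preserves uniform bounds. The idea is to produce a fixed point by monotone iteration from above, starting from an explicit upper bound built from an admissible strategy.

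First, fix $a\in(0,1)$ and consider the threshold strategy $\sigma_a$: wait until $\bar \Pi$ first reaches the level $a$, inspect, and upon a negative outcome restart from the new posterior $g(a)<a$. Denote by $\bar V_a(\pi)$ its cost. From \eqref{eq:Pibar} one checks $\bar \E_\pi[1-\bar \Pi_t]=(1-\pi)\e^{-\lambda t}$, so by Markov's inequality $\bar \P_\pi(T_a>t)\le(1-\pi)\e^{-\lambda t}/(1-a)$ where $T_a:=\inf\{t\ge 0:\bar \Pi_t\ge a\}$, hence $\bar \E_\pi[T_a]\le 1/(\lambda(1-a))$ uniformly in $\pi\in[0,a]$. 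Combined with the fact that the number of inspections under $\sigma_a$ is geometric with success probability $(1-\epsilon)a$, this yields a constant $M=M(a,\epsilon,\lambda,\beta)$ with $\bar V_a(\pi)\le M$ for every $\pi\in[0,1]$; when $\pi\in(a,1]$ a finite initial burst of instantaneous inspections driven by the iterates $g^k(\pi)\downarrow 0$ reduces to the previous case and contributes only a bounded additive term. Using $\sigma_a$ as a candidate in the infimum defining $\cJ\bar V_a$ gives $\cJ\bar V_a\le\bar V_a$.

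Now set $V_0:=\bar V_a$ and $V_{n+1}:=\cJ V_n$. Monotonicity of $\cJ$ and the bound above produce a non-increasing sequence bounded below by $0$, so the pointwise limit $V_\infty$ exists and is uniformly bounded by $M$. To identify $V_\infty$ with a fixed point of $\cJ$ I would argue in two directions. For every admissible $\tau$ with $\bar \E_\pi[\tau]<\infty$ the bound $V_{n+1}(\pi)\le\bar \E_\pi[\,1+\beta\int_0^\tau\bar \Pi_t\,\ud t+(1-(1-\epsilon)\bar \Pi_\tau)V_n(g(\bar \Pi_\tau))\,]$ has integrand dominated by $1+\beta\tau+M$, so dominated convergence as $n\to\infty$ followed by an infimum over $\tau$ yields $V_\infty\le\cJ V_\infty$. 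Conversely, $V_n=\cJ V_{n-1}\ge\cJ V_\infty$ by monotonicity, so $V_\infty\ge\cJ V_\infty$. The two inequalities together give the recursive formula \eqref{eq:def.v}, and Borel measurability of $V_\infty$ is inherited from that of each $V_n$ via the standard optimal stopping theory invoked after \eqref{eq:def.vk}.

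The main obstacle is the verification that $\bar V_a$ is genuinely uniformly bounded: naive strategies easily fail, as witnessed by inspecting at every step, whose cost equals $\sum_{k\ge 0}\prod_{j=0}^{k-1}(1-(1-\epsilon)g^j(\pi))=\sum_{k\ge 0}(1-\pi)/(1-g^k(\pi))$ by the identity $(1-(1-\epsilon)\pi)(1-g(\pi))=1-\pi$, and diverges for $\pi<1$ since $g^k(\pi)\downarrow 0$. Keeping the threshold $a$ bounded away from $1$ is precisely what enforces a uniform lower bound $(1-\epsilon)a>0$ on the per-inspection detection probability and a uniform upper bound on the expected waiting time; once this boundedness is secured, the remaining monotonicity and dominated convergence arguments are routine.
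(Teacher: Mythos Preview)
Your approach is correct and is essentially the paper's: both iterate the monotone operator $\cJ$ downward from the cost of a threshold-$a$ strategy (the paper calls it $w_1$ and builds it explicitly from the ODE solution $\varphi_a$, but it is the same function as your $\bar V_a$) and identify the decreasing pointwise limit as a fixed point. The execution differs only in that you bound $\bar V_a$ probabilistically via the exponential tail of $T_a$ and the geometric law of the inspection count, whereas the paper uses an explicit geometric-series estimate on the intervals $[g^{-(k-1)}(a),g^{-k}(a))$; for the inequality $V_\infty\le\cJ V_\infty$ the paper fixes a $\delta$-optimal $\sigma_\delta$ and applies monotone convergence, while you fix $\tau$ and use dominated convergence---note that your restriction to $\bar\E_\pi[\tau]<\infty$ is not quite what you need, since the correct dominating function is $1+\beta\int_0^\tau\bar\Pi_t\,\ud t+M$ and the inequality is trivial whenever this has infinite expectation.
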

\begin{proof}
Let us start by assuming that we can construct a sequence $(w_n)_{n\in\N}$ for functions $[0,1]\to[0,\infty)$ such that $w_1$ is bounded, $w_n\ge w_{n+1}\ge 0$ for all $n\in\N$ and
\begin{align}\label{eq:WN}
w_{n+1}(\pi):=\inf_{\sigma\in\cT(\bar \cF_t)}\bar \E_\pi\Big[1+\beta\int_{0}^{\sigma}\bar \Pi_t\ud t+\big(1\!-\!(1\!-\!\epsilon)\bar \Pi_{\sigma}\big)w_{n}\big(g(\bar\Pi_{\sigma})\big)\Big],
\end{align}
for $\pi\in[0,1]$. Then we can define the limit
\[
w_\infty(\pi):=\lim_{n\to\infty}w_n(\pi),\quad \pi\in[0,1].
\]
Since $w_{n}\ge w_\infty$ for $n\in\N$, then
\begin{align*}
w_{n+1}(\pi)\ge \inf_{\sigma\in\cT(\bar \cF_t)}\bar \E_\pi\Big[1+\beta\int_{0}^{\sigma}\bar \Pi_t\ud t+\big(1\!-\!(1\!-\!\epsilon)\bar \Pi_{\sigma}\big)w_{\infty}\big(g(\bar\Pi_{\sigma})\big)\Big].
\end{align*}
Letting $n\to\infty$, the latter yields
\begin{align*}
w_\infty(\pi)\ge \inf_{\sigma\in\cT(\bar \cF_t)}\bar \E_\pi\Big[1+\beta\int_{0}^{\sigma}\bar \Pi_t\ud t+\big(1\!-\!(1\!-\!\epsilon)\bar \Pi_{\sigma}\big)w_{\infty}\big(g(\bar\Pi_{\sigma})\big)\Big].
\end{align*}
For the reverse inequality we notice that, for any $\delta>0$ there is $\sigma_\delta\in\cT(\bar \cF_t)$ such that
\begin{equation}
\begin{aligned}
\inf_{\sigma\in\cT(\bar\cF_t)}&\bar \E_\pi\Big[1+\beta\int_{0}^{\sigma}\bar \Pi_t\ud t+\big(1\!-\!(1\!-\!\epsilon)\bar \Pi_{\sigma}\big)w_{\infty}\big(g(\bar\Pi_{\sigma})\big)\Big]\\
\ge& \bar \E_\pi\Big[1+\beta\int_{0}^{\sigma_\delta}\bar \Pi_t\ud t+\big(1\!-\!(1\!-\!\epsilon)\bar \Pi_{\sigma_\delta}\big)w_{\infty}\big(g(\bar\Pi_{\sigma_\delta})\big)\Big]-\delta\\
=&\bar \E_\pi\Big[1+\beta\int_{0}^{\sigma_\delta}\bar \Pi_t\ud t+\big(1\!-\!(1\!-\!\epsilon)\bar \Pi_{\sigma_\delta}\big)\lim_{n\to\infty}w_{n-1}\big(g(\bar\Pi_{\sigma_\delta})\big)\Big]-\delta\\
=&\lim_{n\to\infty}\bar \E_\pi\Big[1+\beta\int_{0}^{\sigma_\delta}\bar \Pi_t\ud t+\big(1\!-\!(1\!-\!\epsilon)\bar \Pi_{\sigma_\delta}\big)w_{n-1}\big(g(\bar\Pi_{\sigma_\delta})\big)\Big]-\delta\\
\ge &\lim_{n\to\infty} w_n(\pi)-\delta=w_\infty(\pi)-\delta,
\end{aligned}
\end{equation}
where the second equality is by the monotone convergence theorem.
Now, by arbitrariness of $\delta$ we conclude that $w_\infty$ is a solution of {\bf Problem [R]}.

It remains to construct the sequence $(w_n)_{n\in\N}$ with the properties above. First of all, we notice that if $V$ is a solution to {\bf Problem [R]}, then it must be $V(1)=1+\epsilon V(1)$, hence $V(1)=(1-\epsilon)^{-1}$. With this insight we set
\[
w_n(1)=(1-\epsilon)^{-1},\quad\text{for all $n\in\N$}.
\]
It suffices that $w_1\ge 0$ and $w_2\le w_1$, for then $0\le w_3\le w_2$ by \eqref{eq:WN} and we conclude by induction that $0\le w_{n+1}\le w_n$ for all $n\in\N$. In order to construct a suitable $w_1$ let us
fix an arbitrary $a\in(0,1)$ and define
\[
\varphi_a(\pi)=\varphi(\pi):=\bar \E_\pi\Big[\beta\int_0^{\tau_a}\bar \Pi_t\ud t\Big],
\]
with $\tau_a=\inf\{t\ge 0:\bar \Pi_t\ge a\}$.
As shown in~\eqref{eq:hitting-Psi}, $\varphi$ can be expressed by
\begin{align*}
\varphi(\pi)=\left\{
\begin{array}{ll}
\Psi(\pi) - \Psi(a), & \pi\in[0,a),\\
0,& \pi\in[a,1],
\end{array}
\right.
\end{align*}
where we recall the function $\psi$ defined in \eqref{eq:def.psi}.
Next, let us set
\[
c(a):=\frac{1+\varphi\big(g(a)\big)}{(1-\epsilon)a},
\]
and let us define $w_1$ piecewise as follows. For $\pi\in[0,a]$, we let
\begin{equation*}
w_1(\pi)=1+\varphi(\pi)+\big(1-(1-\epsilon)a\big)c(a).
\end{equation*}
Notice that $w_1(g(a))=c(a)$. Recall $g^{-1}$ from \eqref{eq:def.ginv} and, for $\pi\in(a,g^{-1}(a)]$, let
\begin{equation*}
w_1(\pi)=1+\big(1-(1-\epsilon)\pi\big)w_1\big(g(\pi)\big).
\end{equation*}
Then $w_1$ is continuous at $a$. We continue recursively and define
\begin{equation}\label{eq:def-w1}
    w_1(\pi)=1+\big(1-(1-\epsilon)\pi\big)w_1\big(g(\pi)\big)
\end{equation}
on each interval $(g^{-k}(a),g^{-(k+1)}(a)]$, for $k\in\N$. By this construction we have that $w_1$ is continuous at all points $g^{-k}(a)$, $k\in\N$. That can be proven by induction. Indeed, since $w_1$ is continuous at $a=g^0(a)$, assuming that $w_1$ is continuous at $g^{-k}(a)$, we have
\begin{align*}
\lim_{\pi\uparrow g^{-(k+1)}(a)}w_1(\pi)&=1+\big(1-(1-\epsilon)g^{-(k+1)}(a)\big)\lim_{\pi\uparrow g^{-(k+1)}(a)}w_1\big(g(\pi)\big)\\
&=1+\big(1-(1-\epsilon)g^{-(k+1)}(a)\big)\lim_{\pi\uparrow g^{-k}(a)}w_1\big(\pi\big)\\
&=1+\big(1-(1-\epsilon)g^{-(k+1)}(a)\big)\lim_{\pi\downarrow g^{-k}(a)}w_1\big(\pi\big)\\
&=1+\big(1-(1-\epsilon)g^{-(k+1)}(a)\big)\lim_{\pi\downarrow g^{-(k+1)}(a)}w_1\big(g(\pi)\big)\\
&= \lim_{\pi\downarrow g^{-(k+1)}(a)}w_1(\pi),
\end{align*}
where the induction assumption is used in the third equality, and the other equalities follow from the continuity of $g$ and $g^{-1}$ and the definition of $w_1$ given in~\eqref{eq:def-w1}.
In summary, $w_1$ can be expressed by
\begin{align*}
w_1(\pi)=\left\{\begin{array}{ll}
1+\varphi(\pi)+\big(1-(1-\epsilon)a\big)c(a), & \text{for $\pi\in[0,a]$},\\
1+\big(1-(1-\epsilon)\pi\big)w_1\big(g(\pi)\big), &\text{for $\pi\in(a,1)$},\\
(1-\epsilon)^{-1}, &\text{for $\pi=1$},
\end{array}
\right.
\end{align*}
and we note that it is continuous everywhere including at $\pi = 1$. The latter also implies that $w_1$ is bounded on $[0,1]$ as needed.
Now, for $\pi\in[0,a]$, choosing $\sigma=\tau_a$ we obtain
\begin{align*}
w_{2}(\pi)=&\inf_{\sigma\in\cT(\bar \cF_t)}\bar \E_\pi\Big[1+\beta\int_{0}^{\sigma}\bar \Pi_t\ud t+\big(1\!-\!(1\!-\!\epsilon)\bar \Pi_{\sigma}\big)w_{1}\big(g(\bar\Pi_{\sigma})\big)\Big]\\
&\le 1+\varphi(\pi)+\big(1\!-\!(1\!-\!\epsilon)a\big)w_{1}\big(g(a)\big)\\
&= 1+\varphi(\pi)+\big(1\!-\!(1\!-\!\epsilon)a\big)c(a)=w_1(\pi).
\end{align*}
Instead, for $\pi\in[a,1)$, choosing $\sigma=0$ we obtain
\[
w_2(\pi)\le 1+\big(1\!-\!(1\!-\!\epsilon)\pi\big)w_{1}\big(g(\pi)\big)=w_1(\pi).
\]
Hence $w_2(\pi)\le w_1(\pi)$ for $\pi\in[0,1)$ and $w_1(1)=w_2(1)=(1-\epsilon)^{-1}$, as required.

Since $w_1\ge 0$ is bounded and $0\le w_2\le w_1$, then $w_n\ge w_{n+1}\ge 0$ and the limit $w_\infty$ is bounded and non-negative.
\end{proof}

Next we establish the equivalence between the recursive problem and the Bayesian quickest detection problem.
\begin{proposition}\label{prop:recur}
Assume $V$ is a solution to {\bf Problem [R]}. Then $V(\pi) \le \hat V(\pi)$ for any $\pi \in[0, 1]$ with $\hat V$ as in \eqref{eq:Vhat}.

Assume in addition that
\begin{align}\label{eq:tauV}
\tau^*_V  = \inf\{t \geq 0 :  V(\bar\Pi_{t}) = (\cA V)(\bar\Pi_{t})\}
\end{align}
is optimal in {\bf Problem [R]} and the sequence of stopping times $(\tau^*_n)_{n\in\N}$ defined below is admissible:
set $\tau^*_0=0$ and, for $n\in\N$,
\begin{align}\label{eq:taun}
&\tau^*_n = \bar\tau_n 1_{\{Y_{\tau^*_{n-1}}=0\}}+(\tau^*_{n-1}+1)1_{\{Y_{\tau^*_{n-1}}>0\}}\,,
\end{align}
where
\begin{align}\label{eq:bartau}
\bar \tau_{n} = \inf\{t \geq  \tau_{n-1}^*:  V(\Pi^{n-1}_{t}) = (\cA V)(\Pi_{t}^{n-1})\}.
\end{align}
Then $V(\pi) = \hat V(\pi)$ for any $\pi \in [0, 1]$ and $(\tau^*_n)_{n=0}^\infty$ is optimal in \eqref{eq:Vhat}.
\end{proposition}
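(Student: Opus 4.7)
The plan is to prove both claims by a dynamic-programming induction that iterates the recursive characterisation of $V$ encoded in \eqref{eq:def.vk}--\eqref{eq:VkV}. For the upper bound, fix an arbitrary admissible sequence $(\tau_n)_{n=0}^\infty\in\cT_*$ and prove by induction on $n\in\N\cup\{0\}$ the estimate
\begin{align}\label{eq:plan-inductionV}
V(\pi)\le \E_\pi\Big[\sum_{j=0}^{n-1} 1_{\{Y_{\tau_j}=0\}}\Big(1+\beta\int_{\tau_j}^{\tau_{j+1}}\Pi^j_s\ud s\Big)+1_{\{Y_{\tau_n}=0\}}V(\Pi^n_{\tau_n})\Big].
\end{align}
The case $n=0$ reduces to the tautology $V(\pi)=V(\pi)$, since $\tau_0=0$, $Y_0=0$ and $\Pi^0_0=\pi$. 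For the inductive step, applying \eqref{eq:def.vk}--\eqref{eq:VkV} at the $\cG^n$-stopping time $\tau_{n+1}\ge\tau_n$ yields, on $\{Y_{\tau_n}=0\}$,
\begin{align}\label{eq:plan-innerV}
V(\Pi^n_{\tau_n})\le \E_\pi\Big[1+\beta\int_{\tau_n}^{\tau_{n+1}}\Pi^n_s\ud s+\big(1-(1-\epsilon)\Pi^n_{\tau_{n+1}}\big)V\big(g(\Pi^n_{\tau_{n+1}})\big)\Big|\cG^n_{\tau_n}\Big],
\end{align}
and combining \eqref{eq:YkGk} with \eqref{eq:Pi.k.tau.k} I recognise that, on the same event, the last term inside the conditional expectation equals $\E_\pi[1_{\{Y_{\tau_{n+1}}=0\}}V(\Pi^{n+1}_{\tau_{n+1}})|\cG^n_{\tau_{n+1}}]$. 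Multiplying \eqref{eq:plan-innerV} by $1_{\{Y_{\tau_n}=0\}}$, taking unconditional expectations, applying the tower property, and invoking the inductive hypothesis then produces \eqref{eq:plan-inductionV} at level $n+1$.

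Passing to the limit $n\to\infty$ in \eqref{eq:plan-inductionV} is carried out by monotone convergence on the non-negative partial sum and dominated convergence on the residual, the latter relying on boundedness of $V$ (secured by the existence proof) together with $\P_\pi(Y_{\tau_n}=0)\to 0$. This last fact, equivalent to $N_Y<\infty$ $\P_\pi$-a.s., is the one genuinely delicate ingredient of the argument: conditioning on $\theta$ and on the sequence $(\tau_n)$, property \eqref{eq:taun-lim} forces $\tau_n>\theta$ for all sufficiently large $n$, after which the test outcomes are conditionally i.i.d.\ Bernoulli with success probability $1-\epsilon>0$ by \eqref{eq:false.negative}, so a positive outcome is attained $\P_\pi$-a.s. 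Combining this with \eqref{eq:Vhat} gives $V(\pi)\le\hat V(\pi)$.

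For the equality statement, assume $\tau^*_V$ is optimal in \textbf{Problem [R]} and $(\tau^*_n)$ is admissible. On $\{Y_{\tau^*_{n-1}}=0\}$, the strong Markov property of $(\Pi^{n-1}_{\tau^*_{n-1}+t})_{t\ge 0}$ combined with the law identity \eqref{eq:laws} maps the optimal stopping rule $\tau^*_V$ for $\bar\Pi$ into $\bar\tau_n-\tau^*_{n-1}$ for $\Pi^{n-1}$; hence $\bar\tau_n$ attains the essential infimum in the characterisation of $V(\Pi^{n-1}_{\tau^*_{n-1}})$. Running the induction above along $(\tau^*_n)$ turns every inequality into an equality, yielding $V(\pi)=\E_\pi\big[\sum_{j=0}^\infty 1_{\{Y_{\tau^*_j}=0\}}(1+\beta\int_{\tau^*_j}^{\tau^*_{j+1}}\Pi^j_s\ud s)\big]\ge\hat V(\pi)$, which together with the first part gives $V(\pi)=\hat V(\pi)$ and optimality of $(\tau^*_n)$ in \eqref{eq:Vhat}.
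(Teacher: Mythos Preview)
Your proof is correct and follows essentially the same route as the paper's: the same one-step unrolling via \eqref{eq:def.vk}--\eqref{eq:VkV} and \eqref{eq:YkGk}, the same induction (you write the residual as $1_{\{Y_{\tau_n}=0\}}V(\Pi^n_{\tau_n})$ whereas the paper writes it one half-step earlier as $1_{\{Y_{\tau_{n-1}}=0\}}\big((\cA V)(\Pi^{n-1}_{\tau_n})-1\big)$, which is equivalent after one more application of \eqref{eq:YkGk}), the same monotone/dominated convergence for the limit, and the same observation that the inequalities collapse to equalities along $(\tau^*_n)$.

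The one place worth tightening is your justification that $\P_\pi(Y_{\tau_n}=0)\to 0$. The phrase ``conditioning on $\theta$ and on the sequence $(\tau_n)$'' is not quite legitimate, because $(\tau_n)$ may depend on $U_1,\dots,U_{n-1}$ through the $Z_k$'s entering $\cG^{k}$, so you cannot condition on $(\tau_n)$ and then treat the $U_k$'s as independent of that conditioning. The idea is sound, however, and is easily made rigorous without any conditioning: on the $\P_\pi$-full set $\{\theta<\infty\}\cap\{\tau_n\to\infty\}$, the inclusion $\{N_Y=\infty\}\subset\{\exists N:\ U_n>1-\epsilon\ \forall n\ge N\}$ holds, and the right-hand event has probability zero by the second Borel--Cantelli lemma applied to the i.i.d.\ $(U_n)$. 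This is in fact a cleaner argument than the paper's explicit $\delta$--$t_\delta$--$n_\delta$ estimate, which yields the same conclusion by directly bounding $\P_\pi(Y_{\tau_n}=0)\le \epsilon^{\,n-m}+2\delta$.
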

\begin{proof}
Fix $\pi\in[0,1]$. It is clear that if $\bar \Pi_0=\pi$ then
\begin{align}\label{eq:law0}
\mathsf{Law}\big((\bar \Pi_t)_{t\ge 0}|\bar \P\big)=\mathsf{Law}\big((\Pi^0_t)_{t\ge 0}|\P_\pi\big).
\end{align}
Optimality of $\tau^*_V$ implies that the law of the underlying process completely determines the value function and therefore
\begin{align}\label{eq:VPi0}
V(\pi)=\inf_{\tau\in\cT(\cG^0_t)}\E_\pi\Big[\beta\int_0^\tau\Pi^0_t\ud t+(\cA V)(\Pi^0_\tau)\Big].
\end{align}

Take $\tau_1$ from an admissible sequence $(\tau_n)_{n=0}^\infty\in\cT_*$. It follows from \eqref{eq:VPi0} that
\begin{align}\label{eq:v1}
V(\pi) &\leq \E_\pi \Big[ 1 + \beta \int_{0}^{\tau_1} \Pi^0_t\ud t  + (1 - \Pi^0_{\tau_1} + \epsilon \Pi^0_{\tau_{1}}) V\big(g(\Pi^0_{\tau_{1}}
)\big) \Big].
\end{align}
For $Z_1$ defined as in \eqref{eq:Zk} we have
\begin{equation*}
\begin{aligned}
\E_\pi\big[1_{\{Z_{1} = 0 \}}V\big(g(\Pi^0_{\tau_{1}})\big) \big|  \cG_{\tau_1}^0\big]  &=  V\big(g(\Pi^0_{\tau_{1}})\big)\P_\pi\big(Z_{1} =  0 \big| \cG_{\tau_1}^0 \big) = V\big(g(\Pi^0_{\tau_{1}})\big)\big(1 - (1 - \epsilon) \Pi^0_{\tau_{1}}\big),
\end{aligned}
\end{equation*}
where the final expression holds because (cf.\ Remark~\ref{rem:intuition}-(c))
\[
\P_\pi\big(Z_{1} =  0 \big| \cG_{\tau_1}^0 \big) = \P_\pi\big(Z_{1} =  0,\theta\le \tau_1 \big| \cG_{\tau_1}^0 \big) +\P_\pi\big(\theta>\tau_1 \big| \cG_{\tau_1}^0 \big) =\epsilon\Pi^0_{\tau_1}+1-\Pi^0_{\tau_1}.
\]
Then, using tower property we can write
\begin{equation}\label{eq:mu01}
\begin{aligned}
\E_\pi & \Big[(1 - \Pi^0_{\tau_1} + \epsilon \Pi^0_{\tau_{1}}) V\big(g(\Pi^0_{\tau_{1}}
)\big)\Big] = \E_\pi\Big[1_{\{Z_1=0\}} V\big(g(\Pi^0_{\tau_{1}}
)\big)\Big] = \E_\pi\Big[1_{\{Y_{\tau_1}=0\}} V\big(\Pi^1_{\tau_{1}}
\big)\Big],
\end{aligned}
\end{equation}
where we used that $\Pi^1_{\tau_{1}}=g(\Pi^0_{\tau_{1}})$ on the event $\{Z_1=0\}=\{Y_{\tau_1}=0\}$.

For $k\in\N$ we set
\begin{align}\label{eq:tildev}
\widetilde V^k(\Pi^k_{\tau_k})=\essinf_{\substack{\tau\in\cT(\cG^k_t)
\\
\tau\ge\tau_k}} \E_\pi \Big[\beta\int_{\tau_k}^\tau \Pi^k_t\ud t  + (\cA V) \big( \Pi^k_\tau\big)\Big|\cG^k_{\tau_k} \Big].
\end{align}
The identity
$V(\pi)=\widetilde V^k(\pi)$ holds for all $\pi\in[0,1]$; although this fact is somewhat well known, we provide a proof in Section~\ref{app:VV} of the Appendix for completeness. Since $\widetilde V^k=V$ and $\tau^*_V$ is optimal by assumption, on the event $\{Y_{\tau_k}=0\}$ the stopping time
\begin{align}\label{eq:tautilde}
\widetilde\tau_{k+1}^*:=\inf\{t\ge \tau_k: V(\Pi^k_t)=(\cA V)(\Pi^k_t)\},
\end{align}
is optimal in \eqref{eq:tildev}, whenever $\P_\pi(\widetilde\tau_{k+1}^*<\infty|Y_{\tau_k}=0)=1$.
Combining the latter fact with \eqref{eq:mu01} we get
\begin{align}\label{eq:mu01b}
\E_\pi\Big[1_{\{Y_{\tau_1}=0\}} V\big(\Pi^1_{\tau_{1}}
\big)\Big]&=\E_\pi\Big[1_{\{Y_{\tau_1}=0\}} \widetilde V^1 \big(\Pi^1_{\tau_{1}}
\big)\Big] \\
&\le\E_\pi\Big[1_{\{Y_{\tau_1}=0\}}\E_{\pi}\Big[\beta\int_{\tau_1}^{\tau_2}\Pi^1_t\ud t+(\cA V)(\Pi^1_{\tau_2})\Big|\cG^1_{\tau_1}\Big]\Big],
\end{align}
where $\tau_2$ is also taken from the same admissible sequence as $\tau_1$.
Combining \eqref{eq:mu01} and \eqref{eq:mu01b} we obtain
\begin{align}\label{eq:ub0}
\E_\pi\Big[(1 - \Pi^0_{\tau_1} + \epsilon \Pi^0_{\tau_{1}}) V\big(g(\Pi^0_{\tau_{1}}
)\big)\Big]\le \E_\pi\Big[1_{\{Y_{\tau_1}=0\}}\Big(\beta\int_{\tau_1}^{\tau_2}\Pi^1_t\ud t+(\cA V)(\Pi^1_{\tau_2})\Big)\Big].\notag
\end{align}
Plugging the final expression into \eqref{eq:v1} gives
\begin{align}
V(\pi) &\leq \E_\pi \Big[ 1 + \beta \int_{0}^{\tau_1} \Pi^0_t\ud t  + 1_{\{Y_{\tau_1}=0\}}\Big(\beta\int_{\tau_1}^{\tau_2}\Pi^1_t\ud t+(\cA V)(\Pi^1_{\tau_2})\Big) \Big].
\end{align}

Let us now argue by induction and assume that for some $n\in\N$
\begin{align}\label{eq:ubV}
\begin{aligned}
V(\pi)  \leq \E_\pi \Big[ &  \sum_{j=0}^{n-1} 1_{\{Y_{\tau_j}=0\}}\Big(1 + \beta \int_{\tau_{j}}^{\tau_{j+1}} \Pi^j_t\ud t\Big)  +1_{\{Y_{\tau_{n-1}}=0\}}\Big((\cA V)(\Pi^{n-1}_{\tau_{n}}) - 1\Big) \Big].
\end{aligned}
\end{align}
As in \eqref{eq:mu01} and \eqref{eq:mu01b}, using $g(\Pi^{n-1}_{\tau_n})=\Pi^{n}_{\tau_n}$ on $\{Y_{\tau_n}=0\}$ we obtain
\begin{equation}
\begin{aligned}
&1_{\{Y_{\tau_{n-1}}=0\}}\big(1-(1-\epsilon)\Pi^{n-1}_{\tau_n}\big)V\big(g(\Pi^{n-1}_{\tau_n})\big)\\
&=\E_\pi\Big[1_{\{Y_{\tau_n}=0\}}V\big(g(\Pi^{n-1}_{\tau_n})\big)\Big|\cG^{n-1}_{\tau_n}\Big]\\
&=\E_\pi\Big[1_{\{Y_{\tau_n}=0\}}\widetilde V^n\big(\Pi^{n}_{\tau_n}\big)\Big|\cG^{n-1}_{\tau_n}\Big]\\
&\le \E_\pi\Big[1_{\{Y_{\tau_n}=0\}} \E_{\pi}\Big[\beta\int_{\tau_n}^{\tau_{n+1}}\Pi^n_t\ud t+(\cA V)(\Pi^n_{\tau_{n+1}})\Big|\cG^{n}_{\tau_n}\Big]\Big|\cG^{n-1}_{\tau_n}\Big],
\end{aligned}
\end{equation}
where $\tau_{n+1}$ is taken from the same admissible sequence as $\tau_1\le \tau_2\le\ldots\le \tau_{n}$.
Substituting into \eqref{eq:ubV} we complete the induction step by tower property
\begin{equation}\label{eq:Vple}
\begin{aligned}
V(\pi) \leq \E_\pi \Big[ & \sum_{j=0}^{n} 1_{\{Y_{\tau_j}=0\}}\Big(1 + \beta \int_{\tau_{j}}^{\tau_{j+1}} \Pi^j_t\ud t\Big)  +1_{\{Y_{\tau_{n}}=0\}}\Big( (\cA V)(\Pi^{n}_{\tau_{n+1}}) - 1 \Big) \Big].
\end{aligned}
\end{equation}

Next we want to let $n\to\infty$. By monotone convergence
\begin{equation}\label{eq:limV}
\begin{aligned}
\lim_{n\to\infty}\E_\pi \Big[ \sum_{j=0}^{n} 1_{\{Y_{\tau_j}=0\}}\Big(1\! +\! \beta \int_{\tau_{j}}^{\tau_{j+1}}\! \Pi^j_t\ud t\Big)\Big]=\E_\pi \Big[ \sum_{j=0}^{\infty} 1_{\{Y_{\tau_j}=0\}}\Big(1\! +\! \beta \int_{\tau_{j}}^{\tau_{j+1}}\! \Pi^j_t\ud t\Big)\Big].
\end{aligned}
\end{equation}
For the remaining term, letting $\|\cdot\|_\infty$ be the $L^\infty(0,1)$-norm, the boundedness of $V$ allows us to write
\begin{equation}\label{eq:limV2}
\begin{aligned}
\lim_{n\to\infty}\E_\pi\Big[1_{\{Y_{\tau_{n}}=0\}}\Big( (\cA V)(\Pi^{n}_{\tau_{n+1}}) - 1\Big)\Big]\le  \|V\|_{\infty} \lim_{n\to\infty}\P_\pi\big(Y_{\tau_{n}}=0\big).
\end{aligned}
\end{equation}
For any $\delta>0$ there is $t_\delta>0$ such that $\P_\pi(\theta>t_\delta)<\delta$. Moreover, since any admissible sequence $(\tau_n)_{n=0}^\infty$ converges to infinity  in the sense of \eqref{eq:taun-lim}, there must also exist $n_\delta\in\N$ such that $\P_\pi(\tau_m\le t_\delta)<\delta$ for all $m\ge n_\delta$. Now, for any $n\ge m\ge n_\delta$
\begin{equation}
\begin{aligned}
\P_\pi\big(Y_{\tau_{n}}=0\big)&=\P_\pi\big(Y_{\tau_{n}}=0,\theta\le t_\delta\big)\!+\!\P_\pi\big(Y_{\tau_{n}}=0,\theta> t_\delta\big)\\
&\le \P_\pi\big(Y_{\tau_{n}}=0,\theta\le t_\delta,\tau_m\le t_\delta\big)
\!+\!\P_\pi\big(Y_{\tau_{n}}=0,\theta\le t_\delta,\tau_m> t_\delta\big)\!+\!\delta\notag\\
&\le \P_\pi\big(Y_{\tau_{n}}=0,\theta\le t_\delta,\tau_m> t_\delta\big)
\!+\!2\delta\notag\\
&= \P_\pi\big(Y_{\tau_{n}}=0,\theta\le t_\delta,\tau_m> t_\delta,\cap_{p=m}^n\{Z_p=0,\theta\le \tau_p\}\big)
\!+\!2\delta\notag\\
&\le \P_\pi\big(\cap_{p=m}^n\{U_p>1-\epsilon,\theta\le \tau_p\}\big)
\!+\!2\delta\le \epsilon^{n-m}+2\delta,\notag
\end{aligned}
\end{equation}
where in the penultimate inequality we use Remark~\ref{rem:intuition}-(c) and for the final one independence of $U_k$'s.
Then,
\[
\lim_{n\to\infty}\P_\pi(Y_{\tau_n}=0)\le 2\delta.
\]
By arbitrariness of $\delta$ and combining \eqref{eq:limV} and \eqref{eq:limV2} we arrive at
\[
V(\pi)\le \E_\pi \Big[ \sum_{j=0}^{\infty} 1_{\{Y_{\tau_j}=0\}}\Big(1 + \beta \int_{\tau_{j}}^{\tau_{j+1}} \Pi^j_t\ud t\Big)\Big].
\]
Since the admissible sequence $(\tau_j)_{j=0}^\infty$ was arbitrary we conclude $V(\pi)\le \hat V(\pi)$.

The remaining claim in the proposition is easily verified upon observing that for the sequence $(\tau^*_j)_{j=0}^\infty$ defined by \eqref{eq:taun} all the inequalities above become equalities. Indeed, $\bar\tau_{k+1}$ coincides with the optimal time $\widetilde\tau^*_{k+1}$ defined in \eqref{eq:tautilde}, and \eqref{eq:Vple} becomes
\begin{equation}\label{eq:Vple2}
V(\pi) = \E_\pi \Big[ \sum_{j=0}^{n} 1_{\{Y_{\tau^*_j}=0\}}\Big(1 + \beta \int_{\tau^*_{j}}^{\tau^*_{j+1}} \Pi^j_t\ud t\Big) +1_{\{Y_{\tau^*_{n}}=0\}}\Big\{(\cA V)(\Pi^{n}_{\tau^*_{n+1}}) - 1\Big\} \Big].
\end{equation}
Then, by letting $n\to\infty$ and using the same argument as above we conclude.
\end{proof}

We say that the solution to {\bf Problem [R]} is a {\em relevant solution} if $\tau^*_V$ defined in \eqref{eq:tauV}
is optimal and the sequence of stopping times defined by \eqref{eq:taun} is admissible. Then, by Proposition~\ref{prop:recur} we deduce that any {\em relevant solution} to {\bf Problem [R]} coincides with $\hat V$ and we have a simple corollary:
\begin{corollary}\label{cor:unique}
There can be at most one {\em relevant solution} $V$ to {\bf Problem [R]} and it must be $V=\hat V$.
\end{corollary}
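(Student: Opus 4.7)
The plan is to deduce the corollary directly from the second half of Proposition \ref{prop:recur}. By the definition given immediately before the statement, a \emph{relevant solution} $V$ is, in particular, a solution of Problem [R] for which the two additional hypotheses in Proposition \ref{prop:recur} both hold: the candidate stopping time $\tau^*_V$ from \eqref{eq:tauV} is optimal in Problem [R], and the associated sequence $(\tau^*_n)_{n=0}^\infty$ built via \eqref{eq:taun}--\eqref{eq:bartau} is admissible in the sense of (i)--(iii) of Section \ref{sec:setting}. Hence the second part of Proposition \ref{prop:recur} applies directly and yields $V(\pi)=\hat V(\pi)$ for every $\pi\in[0,1]$.

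Because $\hat V$ is defined in \eqref{eq:Vhat} as the infimum over $\cT_*$ of a fixed cost functional, it is a single, uniquely determined function on $[0,1]$. Consequently, any two relevant solutions $V_1$ and $V_2$ must each agree pointwise with $\hat V$ and therefore coincide. This proves both that at most one relevant solution to Problem [R] can exist and that, whenever it exists, it is equal to $\hat V$.

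There is essentially no new technical work in this corollary: all the substantive content (the inductive telescoping argument using the tower property, the bound on $\P_\pi(Y_{\tau_n}=0)$ obtained via \eqref{eq:UZ} and independence of the $U_k$'s, and the observation that every inequality in that telescoping becomes an equality along the sequence $(\tau^*_n)_{n=0}^\infty$) has already been carried out inside the proof of Proposition \ref{prop:recur}. The only caveat worth flagging is that the corollary does not assert \emph{existence} of a relevant solution; that will have to be established separately by the explicit construction in Section \ref{sec:solution}, where an explicit candidate $V$ will be produced and then verified to satisfy both the optimality of $\tau^*_V$ and the admissibility of $(\tau^*_n)_{n=0}^\infty$.
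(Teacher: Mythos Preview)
Your argument is correct and matches the paper's approach: the corollary is stated as an immediate consequence of Proposition~\ref{prop:recur}, with no separate proof given, precisely because a relevant solution by definition satisfies the hypotheses of the second part of that proposition and hence equals the uniquely determined function $\hat V$.
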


\begin{remark}It may be worth noticing that, a priori, the solution $V$ of {\bf Problem [R]} is only bounded and measurable. This is not sufficient to deduce optimality of $\tau^*_V$ from standard optimal stopping theory (cf.\ \cite[Ch.\ 3, Sec.\ 3, Thm.\ 3]{shiryaev2007optimal}). We will need to check optimality of $\tau^*_V$ with a more detailed analysis in the next sections.

\end{remark}

\section{Explicit solution of the recursive problem}\label{sec:solution}

In this section we construct an explicit solution of {\bf Problem [R]}. To be precise we construct exactly the unique {\em relevant solution} mentioned in Corollary~\ref{cor:unique}. We proceed with two steps: first we formulate a verification result for a free boundary problem with a recursive boundary condition (Section~\ref{sec:verification}) and then we show that such free boundary problem admits a unique solution (Sections~\ref{sec:construct.sol.free.boundary} and \ref{sec:solve.recursive}).

\subsection{Free boundary problem: Formulation and verification theorem} \label{sec:verification}
The structure of the recursive problem on its own does not provide much intuition for the structure of the optimal stopping rule. However, the connection with the quickest detection problem in Proposition~\ref{prop:recur} suggests that it should be optimal to stop when there is a sufficiently high belief of the presence of a drift. That is, we should expect that $\tau^*_V$ be the first time $\bar \Pi$ exceeds a threshold $a_*\in(0,1)$.  That intuition motivates the form of the free boundary problem we consider next.

Given any $a\in (0,1)$,  recall the increasing sequence $\{g^{-k}(a),k\in\N\cup\{0\}\}$ with $g^0(a):=a$,  introduced after \eqref{eq:def.ginv}. Let us introduce the set
\begin{align}\label{eq:cDa}
\cD_a = \{ a, g^{-1}(a), g^{-2}(a), \dots\} \cap [0, 1).
\end{align}
Note that if $\epsilon = 0$, $\cD_a = \{a\}$, and if $\epsilon \in (0, 1)$, $\cD_a$ is countably infinite.
Given a set $J \subseteq \mathbb{R}$, we use $\cC^m(J)$ to denote the set of functions that are $m$-times continuously differentiable on $J$. The set of continuous functions on $J$ is simply denoted $\cC(J)$.
Recall $\cL$ defined in~\eqref{eq:inf-generator}, which is also the infinitesimal generator of the process $\bar \Pi$.

\begin{proposition}\label{prop:verif}
Assume there is a pair $(v,b)$, where $b\in(0,1)$ and $v$ is a concave function with
\[
v\in\cC\big([0,1]\big)\cap \cC^1\big((0,1)\big)\cap \cC^2\big((0,1)\setminus\cD_b\big)
\]
that satisfies
\begin{align}
&( \cL v  )(\pi) =   -\beta \pi,   \quad   \text{for all $\pi \in  (0, b)$},   \label{eq:c1} \\
&v(\pi) = (\cA v) (\pi),   \quad \text{for all  $\pi \in [b, 1)$},   \label{eq:c3} \\
& v'(b) =  ( \cA v)'(b), \label{eq:c4}\\
& \lim_{\pi \downarrow 0} | v'(\pi ) | < \infty,  \label{eq:c2}\\
&( \cL v  )(\pi) \ge   -\beta \pi,   \quad   \text{for  all $\pi \in (0,1) \setminus\cD_b$},   \label{eq:c1b} \\
& v(\pi)\le (\cA v)(\pi), \quad \text{for all $\pi\in[0,1)$}.\label{eq:obs}
\end{align}
Define $\tau_b=\inf\{t\ge 0 : \bar \Pi_t\ge b\}$.
Then $v$ is a solution of {\bf Problem [R]}
and the stopping time $\tau_b$
is $\bar \P$-a.s.\ finite and optimal.
\end{proposition}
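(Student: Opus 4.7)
The plan is to prove Proposition~\ref{prop:verif} by a classical verification argument: show that $v(\pi)\le \bar\E_\pi\big[\beta\int_0^\tau\bar\Pi_s\,\ud s+(\cA v)(\bar\Pi_\tau)\big]$ for every $\tau\in\cT(\bar\cF_t)$, and then verify equality at $\tau=\tau_b$. A preliminary observation is that $\bar\E_\pi[\tau_b]<\infty$ uniformly in $\pi\in[0,b]$: Dynkin's formula applied to $f(\pi)=1-\pi$, for which $\cL f(\pi)=-\lambda(1-\pi)$, gives $\lambda\bar\E_\pi\big[\int_0^{\tau_b}(1-\bar\Pi_s)\,\ud s\big]=b-\pi$, and since $1-\bar\Pi_s\ge 1-b$ on $[0,\tau_b)$, one obtains $\bar\E_\pi[\tau_b]\le b/[\lambda(1-b)]$; in particular $\tau_b<\infty$ a.s.\ and the ``$\ud s$''-integrals below are finite.

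The core computation applies It\^o's formula to $v(\bar\Pi_{t\wedge\tau\wedge\sigma_N})$ for an arbitrary admissible $\tau$, with the localising sequence $\sigma_N=\inf\{s\ge 0:\bar\Pi_s\ge 1-1/N\}$ controlling the stochastic integral near the degenerate boundary $\pi=1$. Although $v$ is only $\cC^2$ off the countable set $\cD_b$ (which accumulates at $1$), the hypothesis $v\in\cC^1((0,1))$ (propagated from the smooth fit \eqref{eq:c4} at $b=g^0(b)$ to every $g^{-k}(b)$ via the recursion $v=\cA v$ on $[b,1)$) together with the concavity of $v$ forces the distributional $v''$ to be a non-positive measure without atoms; since the occupation measure of $\bar\Pi$ is absolutely continuous in the interior of $(0,1)$, the It\^o--Meyer formula collapses to the classical form
\begin{align*}
v(\bar\Pi_{t\wedge\tau\wedge\sigma_N})=v(\pi)+\int_0^{t\wedge\tau\wedge\sigma_N}(\cL v)(\bar\Pi_s)\,\ud s+\int_0^{t\wedge\tau\wedge\sigma_N}v'(\bar\Pi_s)\tfrac{\mu}{\sigma}\bar\Pi_s(1-\bar\Pi_s)\,\ud \bar W_s,
\end{align*}
with a genuine martingale stochastic integral. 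Taking expectations, using \eqref{eq:c1b} and \eqref{eq:obs}, and passing to the limit $N\to\infty$ and $t\to\infty$ by dominated/monotone convergence (exploiting boundedness of $v$ and of $\bar\Pi\in[0,1]$; RHS is trivially $+\infty$ otherwise) yields $v(\pi)\le \bar\E_\pi\big[\beta\int_0^\tau\bar\Pi_s\,\ud s+(\cA v)(\bar\Pi_\tau)\big]$, and taking the infimum over $\tau$ gives one half of the identity.

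For the matching lower bound I take $\tau=\tau_b$: on $[0,\tau_b)$ the process stays in $[0,b)$, so \eqref{eq:c1} promotes the inequality from \eqref{eq:c1b} to equality in the integrand; path continuity of $\bar\Pi$ forces $\bar\Pi_{\tau_b}=b$, and then \eqref{eq:c3} turns \eqref{eq:obs} into an equality at $\tau_b$ as well. The finiteness of $\bar\E_\pi[\tau_b]$ established at the outset makes the limiting arguments routine, so every inequality becomes an equality and $\tau_b$ attains the infimum. The main technical obstacle I anticipate is precisely the combination of (a) the accumulation of second-derivative breakpoints $g^{-k}(b)\uparrow 1$ and (b) the limit $N\to\infty$ in the stochastic integral near the degenerate boundary: one must verify that $v'(\bar\Pi_s)\bar\Pi_s(1-\bar\Pi_s)$ is square-integrable against occupation time up to $\tau$. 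This reduces to controlling the growth of $v'$ along $\cD_b$, which can be handled by differentiating the recursion $v(\pi)=1+(1-(1-\epsilon)\pi)v(g(\pi))$ on $[b,1)$ to express $v'$ on $[g^{-k}(b),g^{-(k+1)}(b)]$ in terms of $v$ and $v'$ on the previous interval, and then iterating to obtain an explicit (and manageable) bound.
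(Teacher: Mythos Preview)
Your proposal is correct and follows essentially the same verification-by-It\^o/localisation approach as the paper. Two simplifications worth noting: the anticipated ``main technical obstacle'' never actually arises, because the hypothesis $v\in\cC^1((0,1))$ together with \eqref{eq:c2} already makes $v'$ bounded on each $[0,1-1/N]$ (so the stochastic integral is a true martingale up to $\sigma_N$ with no need to iterate the recursion for growth bounds on $v'$); and for the optimality of $\tau_b$ the paper simply observes that $\tau_b\le\sigma_N$ once $b<1-1/N$, so no limiting argument is required there at all.
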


\begin{remark}Notice that $\{0\}$ is an entrance boundary for $\bar \Pi$ \cite[Ch.\ 2]{borodin2015handbook}. Therefore we expect that a solution $V$ to {\bf Problem [R]} should satisfy the boundary condition $V'(0+)=0$.
But for the proof of Proposition~\ref{prop:verif}, we only need~\eqref{eq:c2}. Note that $\lim_{\pi \downarrow 0} v'(\pi)$
always exists because $v$ is concave by assumption.

\end{remark}

\begin{proof}
Consider {\bf Problem [R]} with $\pi = 1$ first. Then $\bar{\Pi}_t = 1$ for every $t \geq 0$ and the value function in~\eqref{eq:def.v} can be computed as
\begin{align*}
    V(1) = \inf_{\tau \in \cT(\bar\cF_t)} \bar{\E}_\pi \left[1 + \beta \tau + \epsilon V(1) \right] = 1 + \epsilon V(1)  + \inf_{\tau \in \cT(\bar\cF_t)} \bar{\E}_\pi [\beta \tau].
\end{align*}
Clearly, $\tau = 0$ is the optimal stopping time, from which we get $V(1) = 1/ (1 - \epsilon)$. So it remains to verify that $v(1) = 1/(1 - \epsilon)$.
By~\eqref{eq:c3} and the continuity of $v$,
\begin{align}\label{eq:v1.limit}
\begin{aligned}
   &  v(1) = \lim_{\pi \uparrow 1} v(\pi) = \lim_{\pi \uparrow 1} (\cA v)(\pi) \\
   &= 1 + \lim_{\pi \uparrow 1}   [1 - (1 - \epsilon) \pi] v (g (\pi) )  = 1 + \epsilon \lim_{\pi \uparrow 1} v(g  (\pi)).
\end{aligned}
\end{align}
If $\epsilon = 0$, then  $v(1) = 1 =V(1)$. If $\epsilon \in (0, 1]$, then $\lim_{\pi \uparrow 1} g(\pi) = 1$, and thus $v(1) = 1 + \epsilon v(1)$.
This proves that $v(1) = 1 / (1 - \epsilon) = V(1)$.

Now consider {\bf Problem [R]} with $\pi \in [0, 1)$.
By~\eqref{eq:c2} and the assumption $v \in \cC^1((0, 1))$, $v'$ is bounded on $(0, 1 - n^{-1}]$ for any $n\in\N \setminus \{1\}$. Thus, $v'$ is extended continuously to $[0,1)$ thanks to the concavity of $v$.
 We denote this extension by writing  $v\in \cC^1([0,1))$.
We further notice that for any $t\in(0,\infty)$ and $\pi\in[0,1)$, we have
$\bar\P_\pi(\bar\Pi_t\in(0,1))=1$. Finally, we set $\sigma_n = \inf\{s \geq 0: \bar{\Pi}_s \geq 1 - n^{-1}\}$, so that $\sigma_n\uparrow \infty$ as $n\to\infty$, $\bar\P_\pi$-a.s.

By applying It\^o-Tanaka's formula~\cite[Theorem 3.7.1]{karatzas2012brownian}  to $v( \bar \Pi_{t\wedge\sigma_n})$ and using   $v \in \cC^1([0, 1))$, we get
\begin{align}\label{eq:m1}
\begin{aligned}
v( \bar \Pi_{t\wedge\sigma_n})  =\;  v(\pi)   + \int_0^{t\wedge\sigma_n} ( \cL v  )(\bar \Pi_s)1_{\{\bar \Pi_s \not\in \cD_b\}} \ud s  + \frac{\mu}{\sigma}\int_0^{t\wedge\sigma_n} \bar \Pi_s (1 - \bar \Pi_s )v'(\bar \Pi_s) \ud \bar{W}_{s}.
\end{aligned}
\end{align}
Notice that in the first integral on the right-hand side above we used that $\int_0^t  1_{\{\bar \Pi_s \in \cD_b\}} \ud s = 0$ a.s.\ and that $\cD_b\cap[0,1-n^{-1}]$ is a finite set.
By~\eqref{eq:c1b}, $( \cL v  )(\pi) \geq -\beta \pi$ for $\pi \in (0, 1)\setminus \cD_b$.
It follows from~\eqref{eq:obs} and~\eqref{eq:m1} that, a.s.,
\begin{align}\label{eq:m3}
(\cA v)(\bar \Pi_{t\wedge\sigma_n}) \ge  v(\bar \Pi_{t\wedge\sigma_n}) \ge v(\pi) - \beta\int_0^{t\wedge\sigma_n}\bar \Pi_s \ud s + M_{t\wedge\sigma_n},
\end{align}
where
\[
M_t := \frac{\mu}{\sigma}\int_0^t \bar \Pi_s (1 - \bar \Pi_s )v'(\bar \Pi_s) \ud \bar{W}_{s}
\]
is a local martingale and a true martingale up to $\sigma_n$.

Let $\tau\in \cT(\bar\cF_t)$ be an arbitrary stopping time (recall that $\tau<\infty$ a.s.).  Hence,
$\bar{\E}_\pi[M_{\tau\wedge \sigma_n}] = 0$ for each $n \in \N$.
Replacing $t$ with $\tau$ in~\eqref{eq:m3} and taking $\bar \E_\pi$ on both sides, we obtain
 \begin{align}\label{eq:m4}
    \bar \E_\pi \left[ \beta\int_0^{\tau\wedge \sigma_n}\bar \Pi_s \ud s \right] + \bar \E_\pi[(\cA v)(\bar \Pi_{\tau\wedge \sigma_n}) ] \geq v(\pi).
\end{align}
Since $\bar{\Pi}\ge 0$, then $\lim_{n\to\infty}\bar \E_\pi[ \beta\int_0^{\tau\wedge \sigma_n}\bar \Pi_s \ud s] = \bar \E_\pi[ \beta\int_0^{\tau}\bar \Pi_s \ud s] $ by the monotone convergence theorem. Since $v$ is bounded and continuous on $[0, 1]$,  so is $\cA v$.
The dominated convergence theorem then implies that  \[\lim_{n\to\infty}\bar \E_\pi[(\cA v)(\bar \Pi_{\tau\wedge \sigma_n}) ] = \bar \E_\pi[(\cA v)(\bar \Pi_{\tau}) ].\]
Taking $n \rightarrow \infty$ in \eqref{eq:m4}, we get
 \begin{align}\label{eq:m5}
    \bar \E_\pi \left[ \beta\int_0^{\tau}\bar \Pi_s \ud s + (\cA v)(\bar \Pi_{\tau}) \right] \geq v(\pi),
\end{align}
for all $\tau\in \cT(\bar\cF_t)$.
Now consider $\tau_b$, which is finite a.s. because  $\lim_{t\to\infty}\bar \Pi_t=1$  a.s.
Take $n$ such that $b<1-n^{-1}$, so that $\tau_b\le \sigma_n$.
Then, using \eqref{eq:c1}, \eqref{eq:c3} and substituting $t$ with $\tau_b$ in \eqref{eq:m1} yields
\begin{align}\label{eq:m6}
 (\cA v)(\bar \Pi_{\tau_b}) = v( \bar \Pi_{\tau_b}) = v(\pi) - \beta\int_0^{\tau_b}\bar \Pi_s\ud s + M_{\tau_b}.
\end{align}
Taking expectation on both sides yields
\begin{align}\label{eq:m7}
v(\pi) = \bar{\E}_\pi \left[ \beta\int_0^{\tau_b}\bar \Pi_s \ud s + (\cA v)(\bar \Pi_{\tau_b}) \right].
\end{align}
It follows from~\eqref{eq:m5} and~\eqref{eq:m7} that $\tau_b$  is optimal and $v$ is equal to the value function given in~\eqref{eq:def.v}.
\end{proof}

\subsection{Solution of the free boundary problem} \label{sec:construct.sol.free.boundary}
The main goal of this section is to construct an explicit solution to the free boundary problem formulated in Proposition~\ref{prop:verif}. We denote it by $(u, a^*)$. We prepare the ground for this result, which is formally stated in Proposition~\ref{lm:recur}, by introducing relevant functions.

As shown in~\cite[Ch.\ 22.1.3 and Eq. (22.1.12)]{peskir2006optimal},  any solution to the ODE~\eqref{eq:c1} that satisfies condition~\eqref{eq:c2} can be expressed by
\begin{equation}\label{eq:Psi}
\Psi(\pi) +C = \int_0^\pi \psi(x) \ud x + C.
\end{equation}
where $C$ is some constant and $\psi$ was introduced in \eqref{eq:def.psi}.
Then it must hold that $u=\Psi + C$ on $[0, a^*)$. The pair $(C,a^*)$ can be determined by imposing \eqref{eq:c3} and \eqref{eq:c4} at the boundary point $\pi=a^*$.
That is,
\begin{align}\label{eq:sys}
\begin{aligned}
C + \Psi(a^*)  =\;& 1 + [1 - (1 - \epsilon) a^*] \left[ C +  \Psi(g(a^*)) \right],  \\
\psi(a^*) = \;& - (1 - \epsilon) \left[ C + \Psi(g(a^*)) \right] + \frac{\epsilon}{1 - (1 - \epsilon) a^*} \psi(g(a^*)).
\end{aligned}
\end{align}
Suppose that $(C, a^*)$ can be uniquely determined by solving~\eqref{eq:sys} and that $a^* \in (0, 1)$ (which we will show in Proposition~\ref{lm:recur}).
We then need to extend $u$ to $[a^*, 1)$ in a way consistent with \eqref{eq:c3}.
We split $[0, 1)$ into disjoint intervals  with breakpoints $\cD_{a^*}$ (see \eqref{eq:cDa}) by defining
\begin{equation}\label{eq:def.Ik}
J_0^* = [0, a^*) \text{ and } J_k^* = [g^{- (k-1)}(a^*),  g^{-k}(a^*) )  \text{ for } k \in \N,
\end{equation}
where we recall $g^0(a^*) = a^*$.
Observe that if $\pi \in J_k^*$ for $k \in \N$, then $g(\pi) \in J_{k - 1}^*$.
Since $u = \Psi + C$  on $J_0^*$,  the function $(\cA u)(\pi)$ is well-defined for $\pi\in  J_1^*$, and we set $u = \cA(\Psi + C)$ so that $u = \cA u$ on $J_1^*$.  Iterating this argument we obtain an extension of $u$ to $[a^*, 1)$ that satisfies \eqref{eq:c3}:
\begin{equation}\label{eq:expression.u}
u(\pi)=
\left\{
\begin{array}{ll}
\Psi(\pi)+C, & \text{ for } \pi\in[0,a^*),\smallskip\\
\big[\cA^k (\Psi + C)\big] (\pi), & \text{ for } \pi\in J_k^*,\, k\in\N,
\end{array}
\right.
\end{equation}
where we recall $\cA^k f=[\cA\circ\ldots\circ\cA]f$ is the $k$-fold application of the operator $\cA$ to a function $f$.
If $\epsilon = 0$,~\eqref{eq:expression.u} simplifies to $u(\pi)  = 1 + (1 - \pi) C$ for  $\pi \in [a^*, 1)$.  Since $\cup_{k \geq 0} J_k^* = [0, 1)$ by Remark~\ref{rem:g-1}, the expression in \eqref{eq:expression.u} defines $u$ on $[0, 1)$.

\begin{proposition}\label{lm:recur}
 The pair $(u,a^*)$ constructed in \eqref{eq:sys}--\eqref{eq:expression.u} is the unique solution pair for the free boundary problem \eqref{eq:c1}--\eqref{eq:c2} such that
\[
u\in  \cC([0,1])\cap\cC^1([0,1))\cap \cC^2([0,1)\setminus\cD)\ \
\text{and}\ \ a^* \in (0, 1),
\]
where $\cD =\cD_{a^*}$ as in \eqref{eq:cDa}.
\end{proposition}

Notice that conditions \eqref{eq:c1b} and \eqref{eq:obs} are not guaranteed by Proposition~\ref{lm:recur} and will have to be proven separately in Section~\ref{sec:solve.recursive}. The proof of the proposition requires three technical lemmas that provide useful bounds on functions $\psi$ and $\psi'$. We state them here and  postpone their proofs to the Appendix.

\begin{lemma}\label{lm:psi}
For any $\pi \in (0, 1)$, we have
\begin{align*}
-\frac{\pi}{1 - \pi} \left( 1 - \frac{\pi}{1 + \rho } \right) < \frac{\lambda}{\beta} \psi(\pi)  <  - \frac{\rho}{1 + \rho}\frac{\pi}{ 1 - \pi }.
\end{align*}
Consequently,
$\lim_{\pi \uparrow 1}   (1 - \pi) \psi(\pi) = -  \beta / ( \lambda + \gamma).$
\end{lemma}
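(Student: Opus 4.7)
The plan is to first reduce the inequality for $\psi$ to a two-sided bound on a simpler auxiliary function via integration by parts, and then establish the latter by an ODE comparison. The limit statement will then follow immediately from the two-sided bound.

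First I would integrate by parts in the defining integral of $\psi$ to eliminate the $1/x$ singularity at the origin. The key identity is $h'(x) = \rho/[x^2(1-x)]$, which rewrites the integrand as $(x/[\rho(1-x)])(e^{h(x)})'$. Since $e^{h(x)} = O(x^\rho e^{-\rho/x})$ as $x\downarrow 0$, the boundary term at $0$ vanishes and, using $\rho\gamma = \lambda$, one obtains
\[
\frac{\lambda}{\beta}\,\psi(\pi) \,=\, -\frac{\pi}{1-\pi} \,+\, G(\pi), \qquad G(\pi) := e^{-h(\pi)}\int_0^\pi \frac{e^{h(x)}}{(1-x)^2}\,\ud x.
\]
The claimed bounds on $\psi$ are then equivalent to the sandwich
\[
\frac{\pi^2}{(1+\rho)(1-\pi)} \,<\, G(\pi) \,<\, \frac{\pi}{(1+\rho)(1-\pi)} \qquad\text{for all } \pi\in(0,1).
\]

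Next I would prove this sandwich by an ODE comparison. By construction $G$ satisfies $(e^h G)'(\pi) = e^{h(\pi)}/(1-\pi)^2$ together with $e^{h(\pi)}G(\pi)\to 0$ as $\pi\downarrow 0$. Denoting the candidate bounds $G^+(\pi) := \pi/[(1+\rho)(1-\pi)]$ and $G^-(\pi) := \pi^2/[(1+\rho)(1-\pi)]$, a direct computation using $h'(\pi) = \rho/[\pi^2(1-\pi)]$ yields the strict inequalities
\[
(e^h G^+)'(\pi) - \frac{e^{h(\pi)}}{(1-\pi)^2} \,=\, \frac{\rho\,e^{h(\pi)}}{(1+\rho)\pi(1-\pi)} \,>\, 0, \qquad (e^h G^-)'(\pi) - \frac{e^{h(\pi)}}{(1-\pi)^2} \,=\, -\frac{e^{h(\pi)}}{1+\rho} \,<\, 0,
\]
on $(0,1)$, while $e^{h(\pi)}G^\pm(\pi)\to 0$ as $\pi\downarrow 0$. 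Integrating from $0$ to $\pi$ then gives $G^-(\pi) < G(\pi) < G^+(\pi)$, as required.

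Finally, the asymptotic follows from the sandwich: multiplying the bounds on $(\lambda/\beta)\psi(\pi)$ by $(1-\pi)$, both sides tend to $-\rho/(1+\rho)$ as $\pi\uparrow 1$, so by the squeeze theorem $\lim_{\pi\uparrow 1}(1-\pi)\psi(\pi) = -\beta\rho/[\lambda(1+\rho)] = -\beta/(\lambda+\gamma)$ since $\lambda/\rho = \gamma$. The main technical step is the IBP reformulation that produces $G$; once that is in place, the ODE comparison and the limit reduce to elementary verification.
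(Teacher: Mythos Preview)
Your proof is correct. The first step --- the integration by parts yielding $\frac{\lambda}{\beta}\psi(\pi) = -\frac{\pi}{1-\pi} + G(\pi)$ --- is exactly the identity the paper derives (its equation for $\rho\int_0^\pi e^{h(x)}/[x(1-x)^2]\,\ud x$), so up to that point the two arguments coincide.

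The difference lies in how the sandwich on $G$ is obtained. The paper performs a \emph{second} integration by parts on $\int_0^\pi e^{h(x)}/(1-x)^2\,\ud x$ and then feeds two crude integral comparisons ($\int e^{h}/(1-x)^2 < \int e^{h}/[x(1-x)^2]$ and a similar one) back into the IBP identities to extract the bounds. You instead treat $e^h G$ as the solution of a first-order ODE and verify directly that $e^h G^\pm$ are strict super/subsolutions with matching boundary data at $0$; integrating the differential inequalities gives $G^- < G < G^+$. Your route is slightly more systematic and avoids the second IBP; the paper's route is more hands-on but equally short. Either way the limit follows by squeezing, as you do.
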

\begin{proof}
See Section~\ref{sec:proof.lm.psi} in Appendix.
\end{proof}

\begin{lemma}\label{lm:u2}
For any $\pi \in (0, 1)$,
\begin{equation*}
  - \frac{ 1 }{ (1- \pi)^2}   < \frac{\lambda}{\beta}\psi'(\pi) < 0.
\end{equation*}
\end{lemma}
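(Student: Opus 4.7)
The plan is to first massage $\psi$ via integration by parts into a form from which $\psi'$ can be computed cleanly, and then apply integration by parts a second time to control what remains. Set $u(\pi):=(\lambda/\beta)\psi(\pi)$ throughout. The key identity is $h'(\pi)=\rho/[\pi^2(1-\pi)]$, which gives
\[
\frac{1}{x(1-x)^2}=\frac{x}{\rho(1-x)}\,h'(x)=\frac{x}{\rho(1-x)}\cdot\frac{\ud}{\ud x}\,\e^{h(x)}\,\e^{-h(x)}.
\]
Integrating by parts inside the definition \eqref{eq:def.psi} of $\psi$, and noting that $x\,\e^{h(x)}\to 0$ as $x\downarrow 0$ because $\e^{h(x)}$ has an $\e^{-\rho/x}$ factor that beats any polynomial, one obtains
\[
u(\pi)=-\frac{\pi}{1-\pi}+T(\pi),\qquad T(\pi):=\e^{-h(\pi)}\int_0^\pi \frac{\e^{h(x)}}{(1-x)^2}\,\ud x.
\]

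Differentiating this representation is now straightforward. Using $T'(\pi)=-h'(\pi)T(\pi)+1/(1-\pi)^2$, the $1/(1-\pi)^2$ terms cancel and we are left with the compact formula
\[
u'(\pi)=-h'(\pi)\,T(\pi)=-\frac{\rho\,T(\pi)}{\pi^2(1-\pi)}.
\]
Since $h'>0$ and the integrand defining $T$ is strictly positive, $T(\pi)>0$ on $(0,1)$ and hence $u'(\pi)<0$. That settles the upper bound in the lemma at once, without appealing to Lemma~\ref{lm:psi}.

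For the lower bound $u'(\pi)>-1/(1-\pi)^2$, the display above reduces the claim to the inequality $T(\pi)<\pi^2/[\rho(1-\pi)]$. Here I would apply integration by parts a second time: using $\e^{h(x)}=(x^2(1-x)/\rho)(\ud/\ud x)\e^{h(x)}$ and again killing the boundary term at $0$ via the $\e^{-\rho/x}$ decay, one gets
\[
T(\pi)=\frac{\pi^2}{\rho(1-\pi)}-\frac{\e^{-h(\pi)}}{\rho}\int_0^\pi \e^{h(x)}\,\frac{x(2-x)}{(1-x)^2}\,\ud x.
\]
Because $x(2-x)/(1-x)^2>0$ on $(0,1)$, the subtracted term is strictly positive, so $T(\pi)<\pi^2/[\rho(1-\pi)]$, which combined with the displayed formula for $u'$ yields $u'(\pi)>-1/(1-\pi)^2$.

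The only delicate step is justifying that the boundary contributions at $x=0$ in both integrations by parts vanish. This is where the $-\rho/\pi$ term in $h$ is essential: near $0$, $\e^{h(x)}$ decays faster than any power of $x$, so expressions such as $x\,\e^{h(x)}$ and $x^2 \e^{h(x)}/(1-x)$ tend to $0$ as $x\downarrow 0$. Once that is recorded, the two integrations by parts go through and give both inequalities in the lemma essentially without further computation.
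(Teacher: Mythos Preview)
Your argument is correct. The representation $u(\pi)=-\pi/(1-\pi)+T(\pi)$ is exactly the integration-by-parts identity \eqref{eq:psi1.1} from the proof of Lemma~\ref{lm:psi}, and your second integration by parts giving $T(\pi)=\pi^2/[\rho(1-\pi)]-(\text{positive term})$ is identity \eqref{eq:psi1.2} (note $x(2-x)/(1-x)^2=1/(1-x)^2-1$). Your compact formula $u'=-h'T$ is equivalent to the ODE identity \eqref{eq:def.psi1}: indeed, since $u=-\pi/(1-\pi)+T$, one has $\pi+(1-\pi)u=(1-\pi)T$, so the paper's expression $u'=-\rho[\pi+(1-\pi)u]/[\pi^2(1-\pi)^2]$ reduces to yours.

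So the mathematical content is identical to the paper's route; the difference is organisational. The paper first records the bounds on $\psi$ as a separate Lemma~\ref{lm:psi} and then invokes the ODE \eqref{eq:def.psi1} to transfer them to $\psi'$. You inline those same two integrations by parts and differentiate the integral representation directly, bypassing any explicit appeal to Lemma~\ref{lm:psi} or to the ODE. Your packaging is arguably more self-contained for this lemma in isolation, while the paper's splits out the $\psi$-bounds because they are reused elsewhere (e.g., for the limit in Lemma~\ref{lm:psi} and in Lemma~\ref{lm:key}).
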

\begin{proof}
See Section~\ref{sec:proof.lm.u2} in Appendix.
\end{proof}

The next lemma, in particular, provides an estimate which will also be needed later on  to verify condition \eqref{eq:obs} in the free boundary problem.
\begin{lemma}\label{lm:key}
For any $\pi \in (0, 1)$ and $\epsilon \in [0, 1)$,
\[
(1 - \epsilon) \beta \pi + \lambda (1 - \pi) [ \psi(\pi) - \psi(g_\epsilon(\pi))] > 0 \text{  and  } \psi'(\pi) < \frac{\epsilon^2 \,  \psi'(g_\epsilon(\pi)) }{[1 - (1 - \epsilon )\pi]^3}.
\]
\end{lemma}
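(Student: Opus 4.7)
The plan is to first observe that the two inequalities are equivalent, via the ODE
\[
\lambda(1-\pi)\psi(\pi) + \gamma \pi^2(1-\pi)^2 \psi'(\pi) = -\beta \pi
\]
satisfied by $\psi$ (this follows from the construction in \eqref{eq:Psi}--\eqref{eq:def.psi}). Writing $g = g_\epsilon(\pi)$, the identities $1-g = (1-\pi)/[1-(1-\epsilon)\pi]$, $(1-(1-\epsilon)\pi)\,g = \epsilon\pi$, and $(1-(1-\epsilon)\pi)\,g^{2}(1-g)^{2} = \epsilon^{2}\pi^{2}(1-\pi)^{2}/(1-(1-\epsilon)\pi)^{3}$, together with the ODE evaluated at $x=g$, yield
\[
\lambda(1-\pi)\psi(g_\epsilon(\pi)) = -\epsilon\beta\pi - \gamma\pi^{2}(1-\pi)^{2}\frac{\epsilon^{2}}{(1-(1-\epsilon)\pi)^{3}}\psi'(g_\epsilon(\pi)).
\]
Applying the ODE once more at $x=\pi$ for the term $\lambda(1-\pi)\psi(\pi)$ and subtracting gives the algebraic identity
\[
(1-\epsilon)\beta\pi + \lambda(1-\pi)[\psi(\pi)-\psi(g_\epsilon(\pi))] = \gamma\pi^{2}(1-\pi)^{2}\left[\frac{\epsilon^{2}}{(1-(1-\epsilon)\pi)^{3}}\psi'(g_\epsilon(\pi)) - \psi'(\pi)\right].
\]
Since $\gamma\pi^{2}(1-\pi)^{2}>0$, it therefore suffices to establish the first inequality.

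Denote the left-hand side of the first inequality by $F(\pi,\epsilon)$, regarded as a function of $\epsilon\in[0,1]$ with $\pi\in(0,1)$ fixed. Since $g_{1}(\pi)=\pi$, one has $F(\pi,1)=0$. I will prove that $\epsilon\mapsto F(\pi,\epsilon)$ is strictly decreasing on $[0,1]$, so that $F(\pi,\epsilon) > F(\pi,1) = 0$ for every $\epsilon\in[0,1)$.

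To that end, direct differentiation together with $\partial_{\epsilon} g_\epsilon(\pi) = \pi(1-\pi)/(1-(1-\epsilon)\pi)^{2}$ produces
\[
\partial_\epsilon F(\pi,\epsilon) = -\beta\pi - \frac{\lambda\pi(1-\pi)^{2}}{(1-(1-\epsilon)\pi)^{2}}\,\psi'(g_\epsilon(\pi)).
\]
The strict bound $-\psi'(q) < \beta/[\lambda(1-q)^{2}]$ from Lemma~\ref{lm:u2}, applied at $q=g_\epsilon(\pi)$ and combined with $1-g_\epsilon(\pi) = (1-\pi)/(1-(1-\epsilon)\pi)$, gives
\[
-\frac{\lambda\pi(1-\pi)^{2}}{(1-(1-\epsilon)\pi)^{2}}\,\psi'(g_\epsilon(\pi)) < \frac{\lambda\pi(1-\pi)^{2}}{(1-(1-\epsilon)\pi)^{2}}\cdot\frac{\beta(1-(1-\epsilon)\pi)^{2}}{\lambda(1-\pi)^{2}} = \beta\pi,
\]
so $\partial_\epsilon F(\pi,\epsilon) < 0$. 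The key point---and the only potential obstacle---is the exact cancellation of the $\beta\pi$ terms in the last display, which reveals that the bound in Lemma~\ref{lm:u2} is precisely sharp enough for the monotonicity argument to go through; no further estimates are required.
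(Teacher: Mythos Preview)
Your proof is correct and follows essentially the same approach as the paper's. The only difference is organizational: you first establish the algebraic identity showing the two inequalities are equivalent and then prove the first one, whereas the paper proves the first inequality directly and afterwards deduces the second from it via the same ODE manipulation; the core ingredients---the monotonicity of $\epsilon\mapsto F(\pi,\epsilon)$ with $F(\pi,1)=0$, and the use of the bound from Lemma~\ref{lm:u2} to obtain the exact cancellation $\partial_\epsilon F<0$---are identical in both arguments.
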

\begin{proof}
See Section~\ref{sec:proof.lm.key} in Appendix.
\end{proof}

We are now ready to prove Proposition~\ref{lm:recur}.

\begin{proof}[Proof of Proposition {\ref{lm:recur}}]
The function $u$ in \eqref{eq:expression.u} solves the system \eqref{eq:c1}--\eqref{eq:c2} by construction, provided that $(C, a^*)$ is a solution pair of \eqref{eq:sys}. Conversely, any solution pair $(\hat u,\hat a)$ of \eqref{eq:c1}--\eqref{eq:c2} is characterized by a pair $(\hat C,\hat a)$ that solves \eqref{eq:sys}. Then, $(u,a^*)$ is the unique solution pair of \eqref{eq:c1}--\eqref{eq:c2} if \eqref{eq:sys} admits a unique solution. It remains to verify that indeed $(C, a^*)$ is the unique solution of \eqref{eq:sys} and that $u\in \cC([0,1])\cap\cC^1([0,1))\cap \cC^2((0,1) \setminus\cD)$. Let us start by observing that continuity of $u$ at $\pi=1$ follows from $u=\cA u$ on $[a^*,1)$ and the same limit as in \eqref{eq:v1.limit}.

Expressing $C$ as a function of $a^*$, we find from~\eqref{eq:sys} that $a^*$ must be a solution to the equation $F(a) = 0$, where
\begin{equation}\label{eq:def.F}
F(a) =  1 + \Psi(g(a))  - g(a) \psi(g(a))   - \Psi(a)   + a \psi(a).
\end{equation}
Differentiating with respect to $a\in(0,1)$, we find that
\begin{align*}
F'(a) = \;&  a \psi'(a) - g(a) g'(a) \psi'(g(a)) =
a \left\{ \psi'(a) - \frac{\epsilon^2 \, \psi'(g(a)) }{[1 - (1 - \epsilon )a]^3}  \right\} < 0,
\end{align*}
where the final inequality is by  the second inequality in Lemma~\ref{lm:key}.  Since $g(0) = 0$ and $\psi(0 + )= 0$, we have $\lim_{a \downarrow 0}F( a )  = 1$.
Meanwhile, Lemma~\ref{lm:u2} implies that $\psi$ is monotone decreasing, so
\begin{align*}
F(a)  =\;&  1 - \int_{g(a)}^a \psi(x) dx + a \psi(a) - g(a) \psi(g(a))     \\
\leq \;& 1 - \psi(a) (a - g(a)) + a \psi(a) - g(a) \psi(g(a))   \\
= \;& 1 + g(a) [ \psi(a) - \psi(g(a)) ].
\end{align*}
Recalling that $g(1)=1$,
\begin{align*}
\lim_{a \uparrow 1} F(a) \leq \;& 1 + \lim_{a \uparrow 1} [ \psi(a) - \psi(g(a))].
\end{align*}
Since $g(a) / (1 - g(a)) = \epsilon a / (1 - a)$, we can apply Lemma~\ref{lm:psi} to get
\begin{equation*}
\begin{aligned}
\lim_{a \uparrow 1} \frac{1-a}{a}[ \psi(a) - \psi(g(a))]
= \lim_{a \uparrow 1} \frac{1-a}{a}\psi(a) - \epsilon \lim_{a \uparrow 1} \frac{1- g(a)}{g(a)} \psi(g(a)) =  - \frac{\beta}{\lambda + \gamma} (1 - \epsilon),
\end{aligned}
\end{equation*}
which shows that $ \psi(a) - \psi(g(a)) \rightarrow -\infty$ as $a \rightarrow 1$. Since we have shown that $F$ is strictly monotone decreasing with $\lim_{a \downarrow 0}F( a )  = 1$ and $\lim_{a \uparrow 1} F(a) = -\infty$,  we conclude that $F(a) = 0$ has a unique solution on $(0, 1)$, proving the existence and uniqueness of $a^*$.
Plugging $a^*$ back into the system \eqref{eq:sys} uniquely determines the constant $C$.

Next, we show by induction that $u\in \cC^1([0,1) )\cap \cC^2([0,1)\setminus\cD)$. We assume $\epsilon \in (0, 1)$; the argument for the case $\epsilon = 0$ is trivial because $u(\pi)=1+C(1-\pi)$ for $\pi\in[a^*,1]$.
Take $k \in \N$ and denote $J^\circ_k=(g^{-(k-1)}(a^*),g^{-k}(a^*))$. Set $J^\circ_0=(0,a^*)$. The inductive hypothesis reads
\def\induct{\textbf{(A)}}
\begin{flushleft}
\induct{} $u:[0,1)\to \R$ is such that  $u\in\cC^1\big([0,g^{-k}(a^*))\big) \cap \cC^2(\cup_{i=0}^{k+1}J^\circ_i)$.
\end{flushleft}
First we argue that \induct{} holds for $k = 0$.
Since $u$ restricted to $(0, a^*)$ is the solution to the ODE~\eqref{eq:c1}, then $u \in \cC^2((0, a^*))$. By Lemma~\ref{lm:u2}, $u''$ is bounded on $(0, a^*)$ and  it admits limits at both points $0$ and $a^*$. Thus $u$ has an extension in $\cC^2([0, a^*])$.
Further,~\eqref{eq:sys} guarantees that  $u$ is continuously differentiable at $a^*$. Since $u = \cA u $ on $J^\circ_1$, then $u\in\cC^2(J_0^\circ\cup J^\circ_1)$ by the regularity of $u$ on $J^\circ_0$. Hence, continuous differentiability at $a^*$ yields $u\in\cC^1 ([0,g^{-1}(a^*)) )$ as needed.

Now suppose \induct{} holds for an arbitrary $k \in \N \cup \{0\}$, and we show that \induct{} also holds for $k + 1$.
For any $\pi \in J_{k + 2}^*$ we have $g(\pi) \in J_{k+1}^*$ and, by~\eqref{eq:expression.u},
\begin{equation}\label{eq:stopIk}
    u(\pi) = 1 + [1 - (1 - \epsilon) \pi] u (g (\pi) ).
\end{equation}
Hence, $u \in \cC^2(J^\circ_{k+2})$ by the assumption~\induct{}. Now we need to show that $u$ is continuously differentiable at $g^{-k}(a^*)$.

It follows from~\eqref{eq:stopIk} that
\begin{equation}\label{eq:contu}
\begin{aligned}
\lim_{\pi \uparrow g^{-k}(a^*) }  u ( \pi ) = \;&  \lim_{\pi \uparrow g^{-k}(a^*) }\Big( 1 + [1 - (1 - \epsilon) \pi] u  (g (\pi) )  \Big)
= \; 1 +   [1 - (1 - \epsilon)  g^{-k}(a^*)  ]   \lim_{\pi \uparrow g^{-k}(a^*) } u  (g (\pi) ).
\end{aligned}
\end{equation}
By the induction hypothesis, $u$ is continuous at $g^{-(k-1)}(a^*)$, yielding the second equality below
\begin{align*}
 \lim_{\pi \uparrow g^{-k}(a^*) } u  (g (\pi) ) =  \lim_{\pi \uparrow g^{-(k-1)}(a^*) } u ( \pi )
 =  \lim_{\pi \downarrow g^{-(k-1)}(a^*) } u  ( \pi )   =  \lim_{\pi \downarrow g^{-k}(a^*) } u (g (\pi) ).
\end{align*}
Thus, combining with \eqref{eq:contu},
\begin{align*}
\lim_{\pi \uparrow g^{-k}(a^*) }  u( \pi ) =\;& 1 +   [1 - (1 - \epsilon)  g^{-k}(a^*)]\lim_{\pi \downarrow g^{-k}(a^*) } u (g (\pi) )\\
=\;&\lim_{\pi \downarrow g^{-k}(a^*) }\Big(1 +   [1 - (1 - \epsilon)  \pi] u (g (\pi) )\Big)=\lim_{\pi \downarrow g^{-k}(a^*) }  u( \pi ),
\end{align*}
where the final equality is by~\eqref{eq:stopIk} again. This shows continuity of $u$ across $g^{-k}(a^*)$, hence $u\in\cC([0,g^{-(k+1)}(a^*))$. Differentiating both sides of~\eqref{eq:stopIk} with respect to $\pi$, one can show by an analogous argument that the left and right derivatives of $u$ at $g^{-k}(a^*)$ coincide. Hence, assumption \induct{} holds for $k+1$.
By induction, we deduce that \induct{} holds for $k \in \N \cup \{0\}$;
that is, $u\in \cC^1([0,1) )\cap \cC^2([0,1)\setminus\cD)$.
\end{proof}

We next prove that the solution $u$ found in Proposition~\ref{lm:recur} is monotone decreasing and concave. These two results are needed to prove that \eqref{eq:c1b} holds (cf.\ proof of Theorem~\ref{thm:FBP}). Concavity is also needed for the verification result in Proposition~\ref{prop:verif}.
\begin{proposition}\label{prop:concave}
The pair $(u, a^*)$ considered in Proposition~\ref{lm:recur} satisfies
\begin{align}
\label{eq:b1} -\frac{\kappa }{1 - \pi}     <   u''(\pi) < 0,  \quad& \text{ for } \pi \in (0,1)\setminus \cD, \\
\label{eq:b2}\kappa \log (1 - \pi) < u'(\pi) < 0, \quad& \text{ for } \pi \in (0,1),
\end{align}
where $\kappa = \beta / ( \lambda (1 - a^*) )$ and $\cD = \cD_{a^*}$.
\end{proposition}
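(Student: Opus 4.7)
The strategy is to establish the bounds on $u''$ first, by induction on the intervals $I_k$ from~\eqref{eq:def.Ik}, and then integrate to obtain the bounds on $u'$.

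For the base case $\pi \in I_0 = [0, a^*)$, the representation $u = \Psi + C$ from~\eqref{eq:expression.u} gives $u'' = \psi'$, so Lemma~\ref{lm:u2} yields $-\beta/(\lambda(1-\pi)^2) < u''(\pi) < 0$. Since $\pi < a^*$ implies $(1-\pi)^2 > (1-a^*)(1-\pi)$, this lower bound is stronger than $-\kappa/(1-\pi)$, so~\eqref{eq:b1} holds on $I_0^\circ$.

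The crux of the inductive step is that differentiating the recursion $u(\pi) = 1 + [1-(1-\epsilon)\pi]\, u(g(\pi))$ twice on $I_k^\circ$ (for $k \geq 1$) produces a clean self-similar identity. Using $g'(\pi) = \epsilon/[1-(1-\epsilon)\pi]^2$, the first derivative is $u'(\pi) = -(1-\epsilon)\, u(g(\pi)) + \epsilon\, u'(g(\pi))/[1-(1-\epsilon)\pi]$, and after differentiating once more the terms proportional to $u'(g(\pi))$ cancel, leaving
\[
u''(\pi) \;=\; \frac{\epsilon^2}{[1-(1-\epsilon)\pi]^3}\, u''(g(\pi)).
\]
Assuming~\eqref{eq:b1} holds on $I_{k-1}^\circ$, and noting that $g(\pi) \in I_{k-1}^\circ$ for $\pi \in I_k^\circ$, the sign assertion $u''(\pi) < 0$ is immediate. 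Combining the recursion with the inductive lower bound and the algebraic identity $1 - g(\pi) = (1-\pi)/[1-(1-\epsilon)\pi]$ gives
\[
u''(\pi) \;>\; -\frac{\epsilon^2\, \kappa}{(1-\pi)\,[1-(1-\epsilon)\pi]^2}.
\]
The desired bound $u''(\pi) > -\kappa/(1-\pi)$ then follows from $[1-(1-\epsilon)\pi]^2 > \epsilon^2$, which holds for all $\pi \in [0,1)$.

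For~\eqref{eq:b2}, we use that $u'(0) = \psi(0+) = 0$ (the two bounds of Lemma~\ref{lm:psi} both tend to $0$ as $\pi \downarrow 0$), and that $u \in \cC^1([0,1))$ with $u''$ existing off the countable set $\cD$. The fundamental theorem of calculus gives $u'(\pi) = \int_0^\pi u''(s)\, ds$, so the upper bound $u''<0$ yields $u'(\pi)<0$, while integrating the lower bound in~\eqref{eq:b1} against $\int_0^\pi (1-s)^{-1}\, ds = -\log(1-\pi)$ gives $u'(\pi) > \kappa \log(1-\pi)$.

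The main obstacle is the recursion derivation: the cancellation of the middle term is what makes the inductive estimate sharp, since otherwise a term in $u'(g(\pi))$ of uncontrolled magnitude would propagate. Once this identity is in hand, the induction is mechanical thanks to $1 - g(\pi) = (1-\pi)/[1-(1-\epsilon)\pi]$ and the elementary inequality $1-(1-\epsilon)\pi > \epsilon$ for $\pi \in [0,1)$, which together ensure that the factor $\epsilon^2/[1-(1-\epsilon)\pi]^2$ absorbs precisely into the telescoping of the bound $\kappa/(1-\pi)$ across successive intervals.
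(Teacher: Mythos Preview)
Your proof is correct and follows essentially the same route as the paper's: both establish the recursion $u''(\pi)=\epsilon^2[1-(1-\epsilon)\pi]^{-3}u''(g(\pi))$ on $(a^*,1)\setminus\cD$, use Lemma~\ref{lm:u2} for the base interval, exploit the identity $1-g(\pi)=(1-\pi)/[1-(1-\epsilon)\pi]$ together with $1-(1-\epsilon)\pi>\epsilon$, and then integrate using $u'(0)=0$. The only cosmetic difference is that the paper packages the inductive step as ``$D(\pi):=(1-\pi)|u''(\pi)|$ is nonincreasing along $g$-iteration and hence bounded by its supremum on $[g(a^*),a^*)$'', whereas you carry the explicit bound $-\kappa/(1-\pi)$ through the induction; the two are equivalent.
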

\begin{proof}
By \eqref{eq:c3} and \eqref{eq:expression.u}, $u(\pi)=(\cA u)(\pi)$ for any $\pi \in [a^*, 1)$. Differentiating both sides twice, thanks to Proposition~\ref{lm:recur} we obtain
\begin{equation} \label{eq:u2}
u''(\pi ) = (\cA u)''(\pi) =  \frac{\epsilon^2 \, u''(g(\pi))}{ [ 1 - (1 - \epsilon)\pi ]^3} ,\quad\text{for $\pi\in(a^*,1)\setminus\cD$}.
\end{equation}
From Lemma~\ref{lm:u2} we have $u''=\psi'<0$ on $(0, a^*)$. By~\eqref{eq:u2}, this implies that $u''(\pi) < 0$  for $\pi \in (a^*, g^{-1}(a^*))$.
By induction, $u''(\pi) < 0$  for every $\pi \in [0, 1) \setminus \cD$.
Further, using
\begin{align*}
\frac{1 - \pi}{ 1 - g(\pi)}    = 1 - (1 - \epsilon) \pi \geq \epsilon,
\end{align*}
we get from~\eqref{eq:u2} that
\begin{equation}
    |u''(\pi )| \leq \frac{1}{ 1 - (1 - \epsilon) \pi  }|u''(g(\pi))|  = \frac{1 - g(\pi)}{1 - \pi}|u''(g(\pi))|.
\end{equation}
Define $D(\pi) := |u''(\pi) (1 - \pi)|$. By the above inequality, $D(\pi) \leq D(g(\pi))$ for $\pi\in (a^*,1)\setminus\cD$.
For any $k \in \N$ and $\pi \in J_k^*$, we have $g^k(\pi) \in[g(a^*), a^*)$ and $D(\pi) \leq D(g(\pi)) \leq \cdots \leq D(g^k(\pi))$. This shows that
\[
\sup_{\pi \in (a^*,1)\setminus\cD } D(\pi) \le \sup_{\pi \in [g(a^*), a^*) } D(\pi) \leq \frac{\beta}{\lambda (1 - a^*) },
\]
thanks to Lemma~\ref{lm:u2}. Lemma~\ref{lm:u2} also gives
\[
\sup_{\pi \in [0, 1) \setminus\cD} D(\pi)=\max\Big(\sup_{\pi \in [0, a^*]} D(\pi), \sup_{\pi \in (a^*,1)\setminus\cD } D(\pi)\Big)\leq \frac{\beta}{\lambda (1 - a^*) }.
\]
Hence, \eqref{eq:b1} holds.

Since $\cD$ is countable and $u'(0) = 0$, then we can write $u'(\pi) = \int_0^\pi u''(x) \ud x$ for any $\pi \in (0, 1)$. The bounds in \eqref{eq:b2} now follow from \eqref{eq:b1}.
\end{proof}

\subsection{Solution of the recursive optimal stopping problem}\label{sec:solve.recursive}
We now state the main result of this section, which verifies that the pair $(u,a^*)$ constructed in the previous subsection  is the unique solution to the free boundary problem \eqref{eq:c1}--\eqref{eq:obs}.
\begin{theorem}\label{thm:FBP}
The pair $(u,a^*)$  constructed in \eqref{eq:sys}--\eqref{eq:expression.u} is the unique pair that solves the free boundary problem \eqref{eq:c1}--\eqref{eq:obs}. Moreover, $a^* \in (0, 1)$, $u$ is concave and
\[
u\in \cC([0,1])\cap\cC^1([0,1))\cap \cC^2([0,1)\setminus\cD),
\]
where $\cD =\cD_{a^*}$ as in \eqref{eq:cDa}.
\end{theorem}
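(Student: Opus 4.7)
The regularity, concavity, and the existence/uniqueness of a pair $(u, a^*)$ with $a^* \in (0,1)$ solving \eqref{eq:c1}--\eqref{eq:c4} and \eqref{eq:c2} are already in hand from Propositions \ref{lm:recur} and \ref{prop:concave}, and uniqueness for the full system \eqref{eq:c1}--\eqref{eq:obs} is immediate from Proposition \ref{lm:recur} since \eqref{eq:c1b} and \eqref{eq:obs} are extra conditions. So the plan is to verify these two inequalities for the pair constructed there.

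For \eqref{eq:obs}, equality on $[a^*, 1)$ is free from \eqref{eq:c3}. On $[0, a^*]$ I would set $h := \cA u - u$ and note that \eqref{eq:c3}--\eqref{eq:c4} give $h(a^*) = h'(a^*) = 0$. Since $g(\pi) < a^*$ for every $\pi \in [0, a^*]$, the value $u(g(\pi))$ equals $\Psi(g(\pi)) + C$, and differentiating $\cA u$ twice (exactly as in the derivation of \eqref{eq:u2}) gives
\[
(\cA u)''(\pi) = \frac{\epsilon^2}{[1-(1-\epsilon)\pi]^3}\,\psi'(g(\pi)).
\]
The second estimate in Lemma \ref{lm:key} then forces $h'' > 0$ on $(0, a^*)$, and integrating twice backwards from $a^*$ together with the vanishing boundary data yields $h \ge 0$ on $[0, a^*]$, which is \eqref{eq:obs}.

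For \eqref{eq:c1b} the interesting region is $\pi \in (a^*, 1) \setminus \cD$, and I would exploit the identity $u = \cA u$ on $[a^*, 1)$. Differentiating twice and using \eqref{eq:u2} I expect to arrive, after cancellations, at the key recursion
\begin{align*}
(\cL u)(\pi) + \beta\pi
&= (1-\epsilon)\big\{\beta\pi - \lambda(1-\pi)[u(g(\pi)) + (1-g(\pi))u'(g(\pi))]\big\} \\
&\quad + [1-(1-\epsilon)\pi]\big\{(\cL u)(g(\pi)) + \beta g(\pi)\big\}.
\end{align*}
Then I would induct on $k \in \N$ with $\pi \in I_k^\circ$: the second summand is $\ge 0$ by the inductive hypothesis (and vanishes identically for $k=1$ by \eqref{eq:c1}), which reduces the task to $F(g(\pi)) \le 0$ for
\[
F(q) := u(q) + (1-q)\,u'(q) - \frac{\beta q}{\lambda\,\epsilon\,(1-q)}.
\]
Proposition \ref{prop:concave} gives $F'(q) = (1-q)u''(q) - \beta / [\lambda\,\epsilon\,(1-q)^2] < 0$ on $(0,1) \setminus \cD$, so $F$ is strictly decreasing on $[0,1)$ and it suffices to show $F(g(a^*)) \le 0$.

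The hard part will be this last reduction. The plan is to use both equations of the smooth-fit system \eqref{eq:sys} to express $u(g(a^*))$ and $\psi(g(a^*))$ in terms of $u(a^*)$ and $\psi(a^*)$, and then invoke \eqref{eq:c1} at $\pi = a^*$ to eliminate $\gamma(a^*)^2(1-a^*)^2\psi'(a^*)$. I expect the algebra to collapse to the identity
\[
(1-\epsilon)\lambda(1-a^*)\,F(g(a^*)) = -\big\{(1-\epsilon)\beta a^* + \lambda(1-a^*)[\psi(a^*) - \psi(g(a^*))]\big\},
\]
whose right-hand side is strictly negative by the first inequality of Lemma \ref{lm:key} evaluated at $\pi = a^*$. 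The boundary case $\epsilon = 0$ has $\cD = \{a^*\}$ and only the single interval $I_1 = [a^*, 1)$ to check, so a simpler one-shot argument, again pinned to Lemma \ref{lm:key} at $a^*$, handles it.
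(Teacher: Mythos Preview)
Your proposal is correct and follows essentially the same approach as the paper: for \eqref{eq:obs} the argument is identical, and for \eqref{eq:c1b} your recursion coincides with the paper's $H(\pi)/(1-\pi)=H(g(\pi))/(1-g(\pi))+R(\pi)$ (with $R(\pi)=\lambda[u'(\pi)-u'(g(\pi))]+(1-\epsilon)\beta\pi/(1-\pi)$) once one uses the identity $u'(\pi)-u'(g(\pi))=-(1-\epsilon)\big[u(g(\pi))+(1-g(\pi))u'(g(\pi))\big]$ obtained by differentiating $u=\cA u$, so your condition $F(g(\pi))\le 0$ is exactly the paper's $R(\pi)\ge 0$. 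The only cosmetic difference is that the paper shows $R$ is increasing on $[a^*,1)$ (via \eqref{eq:u2}) while you show $F$ is decreasing on $[0,1)$ (via concavity alone), and both reduce the endpoint check to the first inequality of Lemma~\ref{lm:key}; incidentally, your identity for $(1-\epsilon)\lambda(1-a^*)F(g(a^*))$ only needs the second equation of~\eqref{eq:sys}, not both, and \eqref{eq:c1} at $a^*$ is not required there.
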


\begin{proof}
Let $(u, a^*)$ be as given in Proposition~\ref{lm:recur}. By Proposition~\ref{lm:recur}, $a^* \in (0, 1)$ and $u \in \cC([0,1])\cap \cC^1([0,1))\cap \cC^2([0,1)\setminus\cD)$ are uniquely determined.
Since $\cD$ is countable, then $u$ is concave by Proposition~\ref{prop:concave}.
It only remains to show that $(u, a^*)$ also  satisfies~\eqref{eq:c1b} and~\eqref{eq:obs}.

Consider \eqref{eq:c1b} first. By construction the equality in \eqref{eq:c1b} holds on $J_0^* = [0, a^*)$, and we need to show that $(\cL u)(\pi +) \geq - \beta \pi$ for $\pi \in [a^*, 1) = \cup_{k \geq 1} J_k^*$.
To simplify the notation, let
\[H(\pi) = (\cL u)(\pi +) + \beta \pi
= \lambda(1-\pi) u'(\pi)+\gamma\pi^2(1-\pi)^2\thinspace u''(\pi +) + \beta \pi.\]
Notice that $H$ is well-defined for all $\pi\in(0,1)$ since $u\in \cC^1([0,1))\cap \cC^2([0,1)\setminus\cD)$  and $u''(\pi+)$ is defined at all points via \eqref{eq:u2}. For any $\pi \in [a^*, 1)$, we have
\begin{align*}
H( g(\pi) ) =&  \lambda\big(1 - g(\pi)\big)  u'(g(\pi))  +  \gamma g(\pi)^2(1-g(\pi))^2\thinspace u''(g(\pi) +) + \beta g(\pi),
\end{align*}
so that, by the definition of $g$
\begin{align*}
[1 - (1 - \epsilon) \pi]  H( g(\pi) ) =  \lambda(1 - \pi)  u'(g(\pi))  + \frac{\gamma \epsilon^2 \pi^2 (1 - \pi)^2  u''(g(\pi) +)  }{  [1 - (1 - \epsilon) \pi]^3} + \beta \epsilon  \pi.
\end{align*}
It then follows from~\eqref{eq:u2} that
\begin{align*}
[1 - (1 - \epsilon) \pi]  H( g(\pi) ) =  \;&
 \lambda(1 - \pi)  u'(g(\pi)) +   \gamma  \pi^2 (1 - \pi)^2   u''(\pi+) + \beta \epsilon \pi \\
 =\;& H(\pi)  +  \lambda(1 - \pi)  [ u'(g(\pi)) - u'(\pi) ] - (1 - \epsilon) \beta \pi.
\end{align*}
Dividing both sides   by $1 - \pi$ and using  $1 - g(\pi) = (1 - \pi)/ [ 1 - (1 - \epsilon) \pi]$,  we get
\begin{align}\label{eq:H1}
\frac{  H( g(\pi) ) }{ 1 - g(\pi)} =  \frac{ H(\pi) }{1 - \pi}  -  R(\pi),
\end{align}
where
\begin{align*}
  R(\pi) :=  \lambda  [  u'(\pi)- u'(g(\pi))] + \frac{(1 - \epsilon) \beta \pi}{1 - \pi}.
\end{align*}
Since $u \in \cC^1( [0, 1) )$,   $R(\pi)$ is also continuous on $[0, 1)$. Further,  for any $\pi \in [a^*, 1) \setminus \cD$,  by~\eqref{eq:u2},
\begin{equation*}
\begin{aligned}
\frac{\partial [  u'( \pi ) - u'(g(\pi))  ] }{\partial \pi} &=   u''(\pi) -
\frac{ \epsilon }{ [ 1 - (1 - \epsilon) \pi ]^2} u''( g(\pi) ) \\
&=  - \frac{ (1 - \pi) (1 - \epsilon)}{\epsilon} u''(\pi) > 0,
\end{aligned}
\end{equation*}
where the final inequality is due to Proposition~\ref{prop:concave}.
As $\pi/(1 - \pi)$ is clearly increasing in $\pi$,   $R$ is continuous and monotone increasing on $[a^*, 1)$, although $R$ is not differentiable at points in $\cD$.
By the first equation in Lemma~\ref{lm:key} and recalling that $u'=\psi$ in $[0,a^*]$,  we have $R(\pi)>0$ for any $\pi \in [0, a^*]$. By monotonicity of $R$ on $[a^*, 1)$, we conclude that $R(\pi) > 0$ for every $\pi \in [0, 1)$.
Since~\eqref{eq:c1} implies that $H(\pi) = 0$ for $\pi \in J_0^*$,  the left-hand side of \eqref{eq:H1} must be zero for $\pi\in J_1^*$. Thus, from the right-hand side of~\eqref{eq:H1} we have $H(\pi)= (1 - \pi) R(\pi) > 0$ for every $\pi \in J_1^*$. Arguing by induction, assume $H(\pi)>0$ for $\pi\in J_k^*$. Then, taking $\pi\in J_{k+1}^*$ in \eqref{eq:H1}, the left-hand side of the equation must be strictly positive. Thus, using $R > 0$, we find that $H(\pi) > 0$ for $\pi\in J_{k+1}^*$. Hence, $H(\pi) > 0$ for every $\pi \in [a^*, 1)$.

Finally we prove that  $u$ satisfies \eqref{eq:obs}. The inequality is  an equality on $[a^*,1]$ by \eqref{eq:expression.u}, so we only need to prove~\eqref{eq:obs} on $J_0^*$.
Differentiating twice the expression for $\cA u$ gives
\begin{align}\label{eq:Au}
(\cA u)''(\pi) =  \frac{\epsilon^2 }{ [ 1 - (1 - \epsilon)\pi ]^3} u''(g(\pi)  ),
\end{align}
for any $\pi$ such that $u''(g(\pi))$ exists.
We claim that   $(\cA u)''   \geq u''$ on $J_0^*$.
Recall that $u' = \psi$ on $J_0^*$ and $g(\pi) \leq  \pi$, which implies  $u''(g(\pi)) = \psi'(g(\pi))$  for any $\pi \in J_0^*$.  By the second equation in Lemma~\ref{lm:key}, $(\cA u)''\geq u'' $  on $J_0^*$.
Since $(\cA u)'(a^*)=u'(a^*)$ by \eqref{eq:c4}, then $(\cA u)'(\pi)\le u'(\pi)$ for $\pi\in[0,a^*]$. The latter inequality, combined with the boundary condition $(\cA u)(a^*)=u(a^*)$, implies $u(\pi)-(\cA u)(\pi)\le 0$ for $\pi\in [0,a^*)$, as needed.
\end{proof}

\section{Solution of the quickest detection problem and further remarks}\label{sec:solQDP}
Building upon the results of the previous sections we finally obtain the solution of the quickest detection problem. First, we state a uniqueness result that connects the pair $(u,a^*)$ to the solution of the quickest detection problem (recall the concept of {\em relevant solution} stated before Corollary~\ref{cor:unique}). Its proof uses Theorem~\ref{thm:FBP}, Proposition~\ref{prop:verif} and the explicit construction of the admissible sequence $(\tau^*_n)_{n\in\N}$ in \eqref{eq:taun}.

\begin{proposition}\label{prop:optimal}
The pair $(u,a^*)$  constructed in \eqref{eq:sys}--\eqref{eq:expression.u}
is the unique {\em relevant solution} to {\bf Problem [R]}. Thus $u=\hat V$, where $\hat{V}$ is given in~\eqref{eq:Vhat}.
\end{proposition}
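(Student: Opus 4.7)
The strategy is to verify that the pair $(u, a^*)$ from Proposition~\ref{lm:recur} meets the three requirements in the definition of a \emph{relevant solution} to \textbf{Problem [R]}, after which uniqueness and the identification $u = \hat V$ follow immediately from Corollary~\ref{cor:unique}. Two of the three requirements---that $u$ solves \textbf{Problem [R]} and that $\tau^*_V$ from \eqref{eq:tauV} is optimal---are essentially packaged in Proposition~\ref{prop:verif}, which applies because Theorem~\ref{thm:FBP} guarantees that $(u, a^*)$ satisfies all of~\eqref{eq:c1}--\eqref{eq:obs} together with the requisite regularity and concavity. To identify $\tau^*_V$ with the threshold rule $\tau_{a^*} := \inf\{t \geq 0 : \bar\Pi_t \geq a^*\}$ appearing in Proposition~\ref{prop:verif}, I would argue that the contact set $\{\pi \in [0,1] : u(\pi) = (\cA u)(\pi)\}$ is exactly $[a^*, 1]$: equality holds on $[a^*, 1]$ by \eqref{eq:c3}, while strict inequality on $[0, a^*)$ is obtained by combining the smooth-fit condition \eqref{eq:c4}, the inequality $(\cA u)'' \ge u''$ on $[0, a^*]$ (established in the last paragraph of the proof of Theorem~\ref{thm:FBP}), and the boundary value $u(a^*) = (\cA u)(a^*)$.

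The remaining task, and the main obstacle, is to establish admissibility of the sequence $(\tau^*_n)_{n \geq 0}$ defined in \eqref{eq:taun}. The adaptedness condition (iii) from Section~\ref{sec:setting} is immediate by construction, since $\bar\tau_n$ is a hitting time of the $(\cG^{n-1}_t)$-adapted process $\Pi^{n-1}$. For $\P_\pi$-a.s.\ finiteness of each $\tau^*_n$, an induction reduces matters to showing $\bar\tau_n < \infty$ a.s.\ on $\{Y_{\tau^*_{n-1}} = 0\}$: conditionally on this event, Proposition~\ref{prop:dynamics} gives $\Pi^{n-1}_{\tau^*_{n-1}} = g(a^*) < a^*$, and then $\bar\tau_n < \infty$ a.s.\ by the same recurrence argument invoked at the end of Proposition~\ref{prop:verif}, namely that $\Pi^{n-1}_t \to 1$ a.s., so finite-level hitting times of the posterior are a.s.\ finite. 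On the complementary event $\{Y_{\tau^*_{n-1}} > 0\}$, the recipe $\tau^*_n = \tau^*_{n-1} + 1$ trivially yields a finite value.

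For the divergence \eqref{eq:taun-lim}, I would introduce $N_Y := \inf\{n \in \N : Y_{\tau^*_n} = 1\}$ and observe that once $N_Y$ is attained, \eqref{eq:taun} forces $\tau^*_n = \tau^*_{N_Y} + (n - N_Y)$ for all $n > N_Y$, so it suffices to check that $\P_\pi(N_Y < \infty) = 1$. On $\{Y_{\tau^*_{n-1}} = 0\}$ the definition of $\bar\tau_n$ yields $\Pi^{n-1}_{\tau^*_n} = a^*$, and identity \eqref{eq:YkGk} then gives
\begin{equation*}
\P_\pi\bigl(Y_{\tau^*_n} = 0 \,\bigm|\, \cG^{n-1}_{\tau^*_n}\bigr)\, 1_{\{Y_{\tau^*_{n-1}} = 0\}} = \bigl(1 - (1-\epsilon) a^*\bigr)\, 1_{\{Y_{\tau^*_{n-1}} = 0\}}.
\end{equation*}
Iterating via the tower property yields $\P_\pi(N_Y > n) = \bigl(1 - (1-\epsilon) a^*\bigr)^n$, which tends to $0$ because $(1-\epsilon) a^* > 0$. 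With admissibility established, $(u, a^*)$ qualifies as a relevant solution to \textbf{Problem [R]}, and Corollary~\ref{cor:unique} delivers both the stated uniqueness and the identity $u = \hat V$.
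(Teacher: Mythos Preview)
Your approach is essentially the same as the paper's: invoke Theorem~\ref{thm:FBP} and Proposition~\ref{prop:verif} to get that $u$ solves \textbf{Problem [R]} with optimal rule $\tau_{a^*}$, then verify admissibility of $(\tau^*_n)$ by showing $N_Y<\infty$ a.s.\ via the geometric-type bound coming from \eqref{eq:YkGk}.

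There is, however, a small gap in your treatment of the case $\pi>a^*$. You assert that on $\{Y_{\tau^*_{n-1}}=0\}$ one has $\Pi^{n-1}_{\tau^*_{n-1}}=g(a^*)$ and $\Pi^{n-1}_{\tau^*_n}=a^*$, but Proposition~\ref{prop:dynamics} only gives $\Pi^{n-1}_{\tau^*_{n-1}}=g(\Pi^{n-2}_{\tau^*_{n-1}})$, and this equals $g(a^*)$ only once the posterior has already entered $[0,a^*]$. If $\pi>a^*$, then $\bar\tau_1=0$, $\Pi^0_{\tau^*_1}=\pi>a^*$, and on $\{Z_1=0\}$ one has $\Pi^1_{\tau^*_1}=g(\pi)$, which may itself exceed $a^*$; in that case $\bar\tau_2=\tau^*_1$ and $\Pi^1_{\tau^*_2}=g(\pi)\neq a^*$. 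The paper handles this by observing that in all cases $\Pi^{j-1}_{\bar\tau_j}\ge a^*$ on $\{Y_{\bar\tau_{j-1}}=0\}$, so that $\P_\pi(Z_j=0\mid Y_{\bar\tau_{j-1}}=0)\le 1-(1-\epsilon)a^*$ with an \emph{inequality}; hence $\P_\pi(N_Y>n)\le(1-(1-\epsilon)a^*)^n$, and equality holds only when $\pi\le a^*$. Your displayed equality $\P_\pi(N_Y>n)=(1-(1-\epsilon)a^*)^n$ should therefore be weakened to $\le$ for general $\pi$, which of course still yields $\P_\pi(N_Y<\infty)=1$.
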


\begin{proof}
By Theorem~\ref{thm:FBP}, $(u,a^*)$ solves uniquely the free boundary problem \eqref{eq:c1}--\eqref{eq:obs}. Then $u=V$ is a solution to {\bf Problem [R]} and
\[\tau^*_V=\tau^*_u=\tau_{a^*}=\inf\{t\ge 0: \bar \Pi_t\ge a^*\}\] is optimal by Proposition~\ref{prop:verif}.
If we prove that the sequence $(\tau^*_n)_{n\in\N}$ defined in \eqref{eq:taun} is admissible, then uniqueness will follow from Proposition~\ref{prop:recur} and Corollary~\ref{cor:unique}.

Measurability and monotonicity of $(\tau^*_n)_{n\in\N}$ are obvious. It is also clear that $\P_\pi(\tau_n^*<\infty)=1$ because $\lim_{t\to\infty}\Pi^n_t=1$, $\P_\pi$-a.s., for all $n\in\N$. It only remains to check condition \eqref{eq:taun-lim}.
Let $\bar N_Y=N_Y((\bar \tau_n))$ as in \eqref{eq:KY} with $\bar \tau_n$ from \eqref{eq:bartau},
i.e., $\bar N_Y$ is the number of tests along the sequence $(\bar \tau_n)_{n\in\N}$ until the drift is detected. It is then clear that by construction $\lim_{n\to\infty}\tau^*_n=\infty$ on the event $\{\bar N_Y<\infty\}$. We now show that $\P_\pi(\bar N_Y=\infty)=0$, which will conclude the proof.

Consider $\P_\pi(\bar N_Y \geq  k)$ for some arbitrary $k \in \N$.
Since \[\{\bar N_Y \geq k\} = \{ Z_1 = \cdots = Z_k = 0\},\]
we have
\begin{align*}
    \P_\pi(\bar N_Y \geq  k) =   \prod_{j = 1}^k  \P_\pi( Z_j = 0 | Z_1 = \cdots = Z_{j-1} = 0) = \prod_{j = 1}^k  \P_\pi( Z_j = 0 | Y_{\bar \tau_{j-1}} = 0).
\end{align*}
By the tower property  and \eqref{eq:YkGk},  we get
\begin{equation}
\begin{aligned}
    \P_\pi( Z_j = 0 | Y_{\bar \tau_{j-1}} = 0)
    = \;& \frac{\E_\pi \big[  \P_\pi \big(Z_j = 0 | \cG^{j-1}_{\bar \tau_j} \big) 1_{\{ Y_{\bar \tau_{j-1}} = 0 \}} \big] }{\P_\pi(Y_{\bar \tau_{j-1}} = 0)}
    =   \E_\pi \big[ 1 - (1 - \epsilon) \Pi^{j-1}_{\bar \tau_j}  | Y_{\bar \tau_{j-1}} = 0  \big].
\end{aligned}
\end{equation}
If $\pi \leq a^*$, then the definition of  $\bar{\tau}_j$ implies that $\Pi^{j-1}_{\bar \tau_j}=a^*$, given $Y_{\bar \tau_{j-1}} = 0$. If $\pi > a^*$, on the event $\{Y_{\bar \tau_{j-1}} = 0\}$ there are two possibilities:
\begin{itemize}
    \item $\Pi^{j-1}_{\bar \tau_{j-1}}\le a^*$ and therefore $\Pi^{j-1}_{\bar \tau_{j}}=a^*$, or
    \item $\Pi^{j-1}_{\bar \tau_{j-1}}> a^*$ and therefore $\bar\tau_j=\bar\tau_{j-1}$ so that $\Pi^{j-1}_{\bar \tau_{j}}=\Pi^{j-1}_{\bar \tau_{j-1}}>a^*$ (in this case $\Pi^{j}_{\bar \tau_{j}}=g(\Pi^{j-1}_{\bar \tau_{j}})<\Pi^{j-1}_{\bar \tau_{j}}$ if also $Z_j=0$; intuitively, tests are performed at time $\bar \tau_{j-1}$ until either a positive test outcome occurs or the posterior probability drops below $a^*$).
\end{itemize}
Hence, in all cases $\P_\pi( Z_j = 0 | Y_{\bar \tau_{j-1}} = 0)  \leq  1 - (1 - \epsilon)a^*$ for each $j$. In particular, the equality holds for each $j$ if $\pi \leq a^*$.
It follows that \[\P_\pi(\bar N_Y \geq  k) \leq ( 1- (1 - \epsilon)a^* )^k,\] which goes to zero as $k$ tends to infinity. Indeed, this shows that if  $\pi \leq a^*$ then  $\bar N_Y$ follows a geometric distribution with success probability $(1 - \epsilon) a^*$ (cf.\ Remark~\ref{rmk:hitting-Psi}).
\end{proof}

We are now ready to provide a formal proof of Theorem~\ref{thm:main}. For completeness, and for the ease of exposition, we formulate a slightly extended version of Theorem~\ref{thm:main} that also accounts for the connections between the value function of the quickest detection problem and the solution of the free boundary problem.
Recall the sets $J_0^*, J_1^*, \dots$ defined in~\eqref{eq:def.Ik}.
\begin{theorem}\label{cor:V}
Let $(u,a^*)$ be the unique pair, constructed in \eqref{eq:sys}--\eqref{eq:expression.u},
that solves the free boundary problem \eqref{eq:c1}--\eqref{eq:obs}. Then, the value function $\hat V$ of the quickest detection problem is equal to $u$ and   given by
\begin{align}
\hat V(\pi)=
\left\{
\begin{array}{ll}
\Psi(\pi)+C, & \pi\in[0,a^*),\smallskip\\
\big[\cA^k (\Psi + C)\big] (\pi), & \pi\in J_k^*,\, k\in\N,
\end{array}
\right.
\end{align}
with $\Psi$ defined in \eqref{eq:Psi} and the constant
\begin{equation}
    C = \frac{1}{(1-\epsilon) a^*} + \frac{1 - (1 - \epsilon)a^* }{ (1 - \epsilon)a^* }\Psi( g(a^*) )  - \frac{1}{(1 - \epsilon) a^*} \Psi(a^*).
\end{equation}
The boundary $a^*$ is the unique solution in $(0,1)$ of
\begin{align} \label{eq:a*eq}
 1 + \Psi(g(a))  - g(a) \psi(g(a))   - \Psi(a)   + a \psi(a) = 0,
\end{align}
and the sequence $(\tau^*_n)_{n=0}^\infty$ given by: $\tau^*_0=0$,
\[
\tau^*_{n} =\bar\tau_n 1_{\{Y_{\tau^*_{n-1}}=0\}}+ (\tau^*_{n-1}+1) 1_{\{Y_{\tau^*_{n-1}}>0\}},
\]
with
\begin{align}
\bar \tau_{n} = \inf\{t \geq  \tau^*_{n-1}:  \Pi^{n-1}_{t}\ge a^*\},
\end{align}
for $n\in\N$, is admissible and optimal for $\hat V$.
\end{theorem}
\begin{proof}
The fact that $u=\hat V$ and the optimality of the sequence of stopping times follow from Proposition~\ref{prop:optimal} (see also Proposition~\ref{prop:recur}).
The expression for $C$ follows by rearranging the two equations in the system~\eqref{eq:sys}.
Finally, the fact that $a^*$ uniquely solves \eqref{eq:a*eq} is shown in the  proof of Proposition~\ref{lm:recur}.
\end{proof}

For a numerical example, we plot the value function $\hat{V} $ with $\lambda = 2$, $\gamma = 0.5$ and $\beta = 1$ in Fig.~\ref{fig:value}.
In line with Proposition~\ref{prop:concave}, for each choice of $\epsilon$, $V \colon [0, 1] \rightarrow \R$ is a concave and monotone decreasing function.
As shown in the proof of Proposition~\ref{prop:verif}, letting $\pi \uparrow 1$ the value converges to $1/(1 - \epsilon)$.
We also observe that for each fixed $\pi$, the value increases with $\epsilon$. That tallies with the intuition that the larger $\epsilon$ the longer it takes to detect the drift.
\begin{figure}
\centerline{\includegraphics[width=0.5\linewidth]{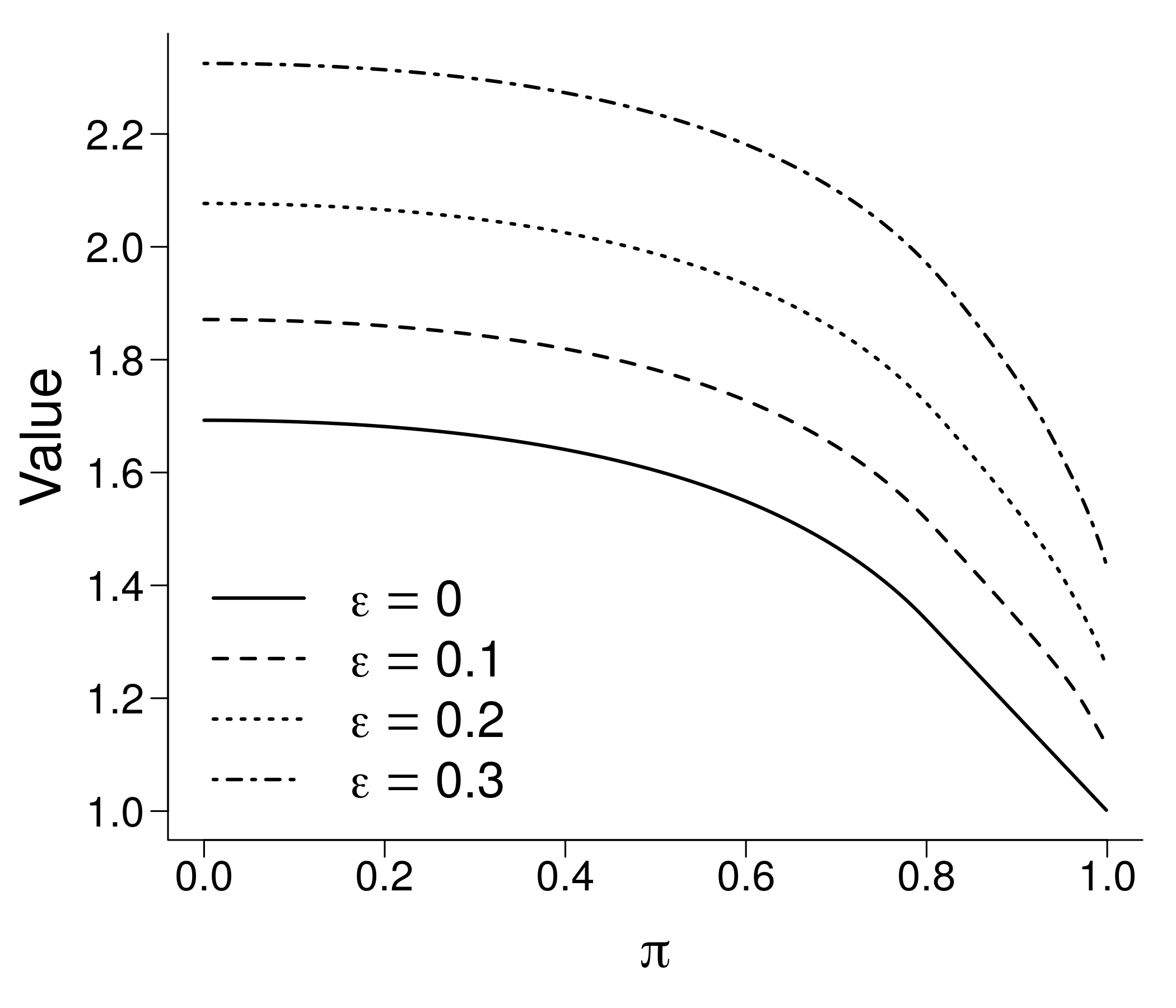}}
\caption{Value function $\hat{V}$ with $\lambda = 2$, $\gamma = 0.5$ and $\beta = 1$.} \label{fig:value}
\end{figure}

\subsection{Properties of the optimal stopping rule}

We now fix parameters $\lambda, \beta, \gamma$ and present  a series of results that characterize how $\epsilon$ affects the optimal stopping rule.
First, in Lemma~\ref{lm:n1}, we show  that the optimal stopping boundary $a^*$ increases with $\epsilon$.
We write $a^* = a^*(\epsilon)$ to indicate that $a^*$ is treated as a function of $\epsilon$.

\begin{lemma}\label{lm:n1}
The mapping $\epsilon\mapsto a^*(\epsilon)$ is strictly increasing for $\epsilon \in [0, 1)$,
and $\lim_{\epsilon \rightarrow 1} a^*(\epsilon) = 1$.
\end{lemma}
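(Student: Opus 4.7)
The plan is to treat $a^*$ as the implicit solution of $F(a,\epsilon)=0$, where
\[
F(a,\epsilon) := 1 + \Psi(g_\epsilon(a)) - g_\epsilon(a)\psi(g_\epsilon(a)) - \Psi(a) + a\psi(a)
\]
is exactly the function from \eqref{eq:def.F}, now with the $\epsilon$-dependence (entering only through $g_\epsilon$) made explicit. Both $\Psi$ and $\psi$ are independent of $\epsilon$, so only the terms in the composition with $g_\epsilon$ need to be differentiated in $\epsilon$. I would combine the sign of $\partial_a F$ (already known to be strictly negative from the proof of Proposition~\ref{lm:recur}, using Lemma~\ref{lm:key}) with the sign of $\partial_\epsilon F$ to conclude via the implicit function theorem.

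The key computation is $\partial_\epsilon F$. Setting $q = g_\epsilon(a)$, a direct differentiation of $g_\epsilon(a)=\epsilon a/[1-(1-\epsilon)a]$ gives
\[
\partial_\epsilon q = \frac{a(1-a)}{[1-(1-\epsilon)a]^2} > 0 \quad \text{for } a \in (0,1).
\]
The derivative of $\Psi(q)$ in $\epsilon$ is $\psi(q)\,\partial_\epsilon q$ since $\Psi'=\psi$, while the derivative of $q\,\psi(q)$ is $\psi(q)\,\partial_\epsilon q + q\psi'(q)\,\partial_\epsilon q$. These telescope to yield
\[
\partial_\epsilon F(a,\epsilon) = -\,q\,\psi'(q)\,\partial_\epsilon q.
\]
For $\epsilon \in (0,1)$ and $a\in(0,1)$ we have $q\in(0,1)$, so $\psi'(q)<0$ by Lemma~\ref{lm:u2}, giving $\partial_\epsilon F > 0$.

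Since $\partial_a F < 0$ and $\partial_\epsilon F > 0$ on the relevant region, the implicit function theorem immediately gives $\frac{d a^*}{d\epsilon} = - \partial_\epsilon F / \partial_a F > 0$ on $(0,1)$, and continuity of $a^*(\cdot)$ extends this to $[0,1)$. To handle the endpoint $\epsilon=0$ cleanly (where $q=0$ and the expression $-q\psi'(q)\partial_\epsilon q$ vanishes), I would argue directly: for any $\epsilon>0$,
\[
F(a^*(0),\epsilon) = F(a^*(0),0) + \int_0^\epsilon \partial_s F(a^*(0),s)\,ds > 0,
\]
since the integrand is strictly positive on $(0,\epsilon]$, and then $\partial_a F<0$ together with $F(a^*(\epsilon),\epsilon)=0$ forces $a^*(\epsilon)>a^*(0)$.

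The main obstacle is really just the bookkeeping in computing $\partial_\epsilon F$: one must check that the $\psi(q)\partial_\epsilon q$ contributions from $\Psi(q)$ and $q\psi(q)$ cancel exactly, leaving only the $-q\psi'(q)\partial_\epsilon q$ term whose sign is dictated by Lemma~\ref{lm:u2}. Once this cancellation is observed, no further analytic work is required.
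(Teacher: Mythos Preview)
Your proof is correct and is essentially the differential version of the paper's argument. The paper observes that $\Phi(q):=\Psi(q)-q\psi(q)$ is increasing (since $\Phi'(q)=-q\psi'(q)>0$ by Lemma~\ref{lm:u2}), writes $F_\epsilon(a)=1+\Phi(g_\epsilon(a))-\Phi(a)$, and then uses directly that $\epsilon\mapsto g_\epsilon(a)$ is increasing to conclude $F_{\epsilon_1}(a)<F_{\epsilon_2}(a)$ for $\epsilon_1<\epsilon_2$; combined with strict monotonicity of $a\mapsto F_\epsilon(a)$ this gives $a^*(\epsilon_1)<a^*(\epsilon_2)$ without invoking the implicit function theorem. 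Your computation $\partial_\epsilon F=-q\psi'(q)\partial_\epsilon q$ is exactly $\Phi'(q)\partial_\epsilon q$, so the two arguments rest on the same cancellation and the same sign information; the paper's direct comparison has the minor advantage of handling the endpoint $\epsilon=0$ uniformly, whereas you needed the extra integral argument because $\partial_\epsilon F$ vanishes there.
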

\begin{proof}
Recall that $a^*$ is determined by solving $F(a) = 0$ with $F$ given in~\eqref{eq:def.F}. Now we write $F_\epsilon$ to emphasize its dependence on $\epsilon$; that is,
\begin{align*}
F_{\epsilon}(a) = 1 + \Psi(g_{\epsilon}(a)) - g_{\epsilon}(a)\psi(g_{\epsilon}(a)) - \Psi(a) + a\psi(a) .
\end{align*}
Differentiating $\Psi(a) - a\psi(a)$ with respect to $a$ gives $- a \psi'(a) > 0$, for any $a \in (0, 1)$, by Lemma~\ref{lm:u2}. Hence, $a\mapsto \Psi(a) - a\psi(a)$ is increasing.

Fix an arbitrary $a$ in $(0,1)$ and let $0\leq\epsilon_{1} < \epsilon_{2} < 1$. Then $g_{\epsilon_{1}}(a) < g_{\epsilon_{2}}(a)$ and it follows that  $F_{\epsilon_{1}}(a) < F_{\epsilon_{2}}(a)$ for any $a\in (0,1)$. Since we have shown in the proof of Proposition~\ref{lm:recur} that $a\mapsto F_{\epsilon}(a)$ is continuous and decreasing on $(0,1)$,   we conclude that $a^*(\epsilon_1) < a^*(\epsilon_2)$; that is, $a^*$ is strictly increasing with $\epsilon$ for $\epsilon \in [0, 1)$.

To prove the second part of the lemma,
first note that $\lim_{\epsilon \rightarrow 1} a^*(\epsilon)$ exists and must be in $(0, 1]$ since $a^*(\epsilon) \in (0, 1)$ by Proposition~\ref{lm:recur}.
But for  any fixed $a \in (0, 1)$,  we have  $\lim_{\epsilon \rightarrow 1} F_\epsilon(a) = 1$  since $\lim_{\epsilon \rightarrow 1} g_\epsilon(a) = a$, which implies that the limit of $a^*(\epsilon)$ has to be $1$ as $\epsilon$ tends to $1$.
\end{proof}

\begin{remark}\label{rmk:epsilon-to-one}
The behavior of $a^*(\epsilon)$ for $\epsilon$ near $1$ could be undesirable for practical purposes (though such cases are uncommon in most applications).
In reality, regardless of how large $\epsilon$ is,  some action should be taken once the posterior probability of the presence of  disorder becomes sufficiently large. To address this issue, we will discuss in Section~\ref{sec:disc} an extension of our problem that could ensure a more intuitive decision rule even when $\epsilon$ is large.
\end{remark}

Next, we study the average drift-detection time under the optimal strategy. In particular, we look at how $\E_\pi [ \tau^*_{N_Y} ]$ depends on $\epsilon$. We observed
in Remark~\ref{rmk:hitting-Psi} that, for any initial value $\pi \leq a^*$, $N_Y$ is geometrically distributed with success probability $(1 - \epsilon) a^*$. Thus $\E_\pi[ N_Y] = ((1 - \epsilon) a^*)^{-1} $ for $\pi \in [0, a^*]$.
With the notation from Theorem~\ref{cor:V}, setting
\[\tau^*_{N_Y}=\inf\{\tau_n^*:Y_{\tau^*_n}=1\} \ \text{ and }\ \bar \tau_{N_Y}=\inf\{\bar \tau_n:Y_{\bar \tau_n}=1\},\]
we have
\begin{align*}
\E_\pi[\tau^*_{N_Y}]&=\E_\pi[\bar \tau_{N_Y}]=\E_\pi\Big[\sum_{k=0}^{N_Y-1}(\bar\tau_{k+1}-\bar\tau_k)\Big] \\ &=\E_\pi[\bar \tau_1]+\E_\pi\Big[\sum_{k=1}^{\infty}1_{\{N_Y\ge k+1\}}(\bar\tau_{k+1}-\bar\tau_k)\Big]\\
&=\E_\pi[\bar \tau_1]+\E_\pi\Big[\sum_{k=1}^{\infty}1_{\{N_Y\ge k+1\}}\E_\pi\big[\bar\tau_{k+1}-\bar\tau_k|\cG^{k}_{\bar \tau_k}\big]\Big],
\end{align*}
where we used that $\{N_Y\ge k+1\}\in\cG^{k}_{\bar \tau_k}$ and the monotone convergence theorem in the last step.  Using the explicit dynamics given by~\eqref{eq:Pik} and \eqref{eq:Pibar} and recalling $\tau_{a^*} = \inf\{ t\ge 0 : \bar{\Pi}_t = a^* \}$, we conclude that
$\E_\pi[\bar \tau_1]=\bar \E_\pi[ \tau_{a^*}]$ and
\[
1_{\{N_Y\ge k+1\}}\E_\pi\big[\bar\tau_{k+1}-\bar\tau_k|\cG^{k}_{\bar \tau_k}\big]=1_{\{N_Y\ge k+1\}}\bar \E_{g(a^*)}\big[\tau_{a^*}\big].
\]
Combining the above formulae we obtain
\begin{equation}\label{eq:wald}
\begin{aligned}
    \E_\pi [ \tau^*_{N_Y} ] =\;\bar \E_\pi[ \tau_{a^*} ]+(\E_\pi[N_Y]-1)\bar \E_{g(a^*)}[\tau_{a^*}]=\;\bar \E_\pi[ \tau_{a^*} ]+\left( \frac{1}{(1 - \epsilon)a^*} - 1 \right)\bar \E_{g(a^*)}[\tau_{a^*}].
\end{aligned}
\end{equation}

Setting
\begin{align*}
\chi(\pi) = -\frac{1}{\gamma}e^{-h(\pi)} \int_{0}^{\pi} \frac{e^{h(x)}}{x^2{(1-x)^2}} \ud x ,
\end{align*}
Lemma~\ref{lm:g3} gives the explicit expression for $\bar \E_\pi[ \tau_{a^*}]$.
\begin{lemma}\label{lm:g3}
It holds
\[
\bar \E_\pi[  \tau_{a^*} ] = -\int^{a^*}_{\pi} \chi(x) \ud x.
\]
\end{lemma}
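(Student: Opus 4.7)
The plan is to identify $w(\pi):=\bar\E_\pi[\tau_{a^*}]$ as the solution of a standard boundary value problem and then solve the associated ODE explicitly by an integrating factor, exactly as was done in Section \ref{sec:construct.sol.free.boundary} for $\varphi$ and $\psi$. Specifically, I will set up
\begin{align*}
(\cL w)(\pi) = -1\ \text{for }\pi\in(0,a^*),\qquad w(a^*)=0,\qquad \lim_{\pi\downarrow 0}|w'(\pi)|<\infty,
\end{align*}
where the ODE follows from Dynkin's formula applied to $w(\bar\Pi_{t\wedge\tau_{a^*}})$, the boundary condition at $a^*$ is immediate, and finiteness at $0$ reflects the fact that $\{0\}$ is an entrance boundary for $\bar\Pi$ (as already noted after Proposition \ref{prop:verif}). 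Existence, boundedness and $\cC^2$-regularity of $w$ on $(0,a^*)$ follow from the same classical arguments used to justify $\varphi$ in Section \ref{sec:construct.sol.free.boundary}, so no new ideas are needed here.

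Next I would reduce the ODE to first order by setting $u:=w'$, which yields
\begin{align*}
\gamma\pi^2(1-\pi)^2 u'(\pi) + \lambda(1-\pi)u(\pi) = -1,\quad \pi\in(0,a^*).
\end{align*}
The key computation is that the integrating factor is precisely $e^{h(\pi)}$: using the partial fraction decomposition $\frac{1}{\pi^2(1-\pi)}=\frac{1}{\pi^2}+\frac{1}{\pi}+\frac{1}{1-\pi}$, one gets
\begin{align*}
\frac{\lambda}{\gamma}\int \frac{d\pi}{\pi^2(1-\pi)} = \rho\Bigl(\log\tfrac{\pi}{1-\pi}-\tfrac{1}{\pi}\Bigr) = h(\pi),
\end{align*}
so the equation becomes $\frac{d}{d\pi}\bigl(e^{h(\pi)}u(\pi)\bigr) = -\frac{e^{h(\pi)}}{\gamma \pi^2(1-\pi)^2}$. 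This is exactly the same integrating factor that produced $\psi$ in \eqref{eq:def.psi}, which is reassuring.

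Integrating from $0$ to $\pi$ gives $e^{h(\pi)}u(\pi) - \lim_{x\downarrow 0} e^{h(x)}u(x) = -\frac{1}{\gamma}\int_0^\pi \frac{e^{h(x)}}{x^2(1-x)^2}dx$. Since $h(x)\to-\infty$ as $x\downarrow 0$ (the $-1/x$ term dominates) and $u$ is bounded near $0$ by the entrance boundary condition, the boundary term vanishes. This yields $w'(\pi)=u(\pi)=\chi(\pi)$, and integrating back from $a^*$ using $w(a^*)=0$ gives $w(\pi) = -\int_\pi^{a^*}\chi(x)dx$, as required.

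The only non-routine point is the vanishing of the boundary term at $0$, but it is handled exactly as in the analogous derivation of $\psi$ (cf. \cite[Sec.\ 22]{peskir2006optimal} and the computations for $\varphi$ in Section \ref{sec:construct.sol.free.boundary}). Everything else is a direct integrating-factor computation, so I expect the proof to be quite short.
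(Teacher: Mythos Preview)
Your proposal is correct and follows essentially the same approach as the paper: set up the boundary value problem $(\cL w)(\pi)=-1$ with $w(a^*)=0$ and $|w'(0+)|<\infty$, solve to obtain $w'=\chi$, and integrate back from $a^*$. The paper's proof is simply terser, declaring the BVP ``well-known'' and omitting the integrating-factor details that you spell out.
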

\begin{proof}
Set $G(\pi) = \bar\E_\pi[\tau_{a^*}]$ for $\pi \in [0, a^*]$. It is well known that $G$ satisfies $\cL G = - 1$, with $G'(0+) < \infty$ and $G(a^*) = 0$.
Solving $\cL G = - 1$ and imposing $G'(0+) < \infty$, we find that $G' = \chi$. From the condition $G(a^*) = 0$, we get $G(\pi) =- \int^{a^*}_\pi \chi(x) \ud x$.
\end{proof}

We numerically investigate how $(1 - \epsilon)a^*(\epsilon)$,  $\bar \E_\pi[ \tau_{a^*}]$ and $\E_\pi [ \tau_{N_Y} ]$ change with $\epsilon$. We fix $\lambda = 2$ and $\gamma = 0.5$ and let $\beta = 0.1, 1, 10, 100$.
In Fig.~\ref{fig:astar}, we study how $a^*(\epsilon)$, $g_\epsilon(a^*(\epsilon))$ and $a^*(\epsilon) - g_{\epsilon}(a^*(\epsilon))$ change  as  functions of $\epsilon$.
As expected from  Lemma~\ref{lm:n1}, $a^*$ increases monotonically with $\epsilon$,  which means that as $\epsilon$ increases it is optimal for the observer to wait longer before performing the first test.
This further implies that $\epsilon \mapsto g_\epsilon(a^*(\epsilon))$ is also increasing, since $g_\epsilon(\pi)$ is monotone increasing in both $\epsilon$ and $\pi$.
However, $a^* - g(a^*)$ decreases with $\epsilon$. Thus, under the optimal stopping strategy, upon observing a negative outcome of a test, the change in the belief of the observer is small when $\epsilon$ is large.

\begin{figure}
\centerline{\includegraphics[width=0.99\linewidth]{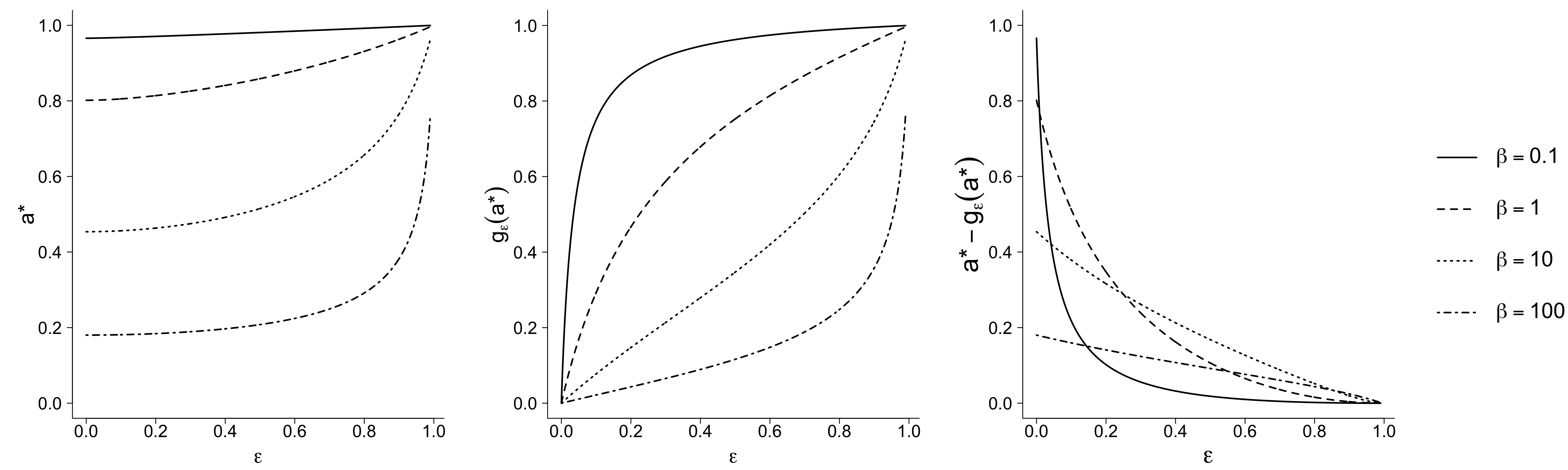}}
\caption{Optimal stopping rule $a^*$ for $\lambda = 2$, $\gamma = 0.5$ and $\epsilon \in [0, 0.99]$.  The line type indicates the choice of $\beta$.
    Left: $\epsilon \mapsto a^*(\epsilon)$; middle: $\epsilon \mapsto g_\epsilon(a^*(\epsilon))$; right:   $\epsilon \mapsto    a^*(\epsilon) - g_{\epsilon}(a^*(\epsilon))$.
    } \label{fig:astar}
\end{figure}

Fig.~\ref{fig:wait} visualizes the properties of the optimal stopping times under the same parameter setting as that of Fig.~\ref{fig:astar}.
First, the left panel shows that $\E_0[N_Y] = ((1 - \epsilon)a^*(\epsilon))^{-1}$ (i.e., the average number of tests needed to detect the drift-change) increases with $\epsilon$; that is, $\epsilon \mapsto ((1 - \epsilon)a^*(\epsilon))$ is decreasing though $\epsilon \mapsto a^*(\epsilon)$ is increasing.
In the middle panel, we plot the expected waiting time between first and second tests, conditional upon a negative outcome of the first test, as a function of $\epsilon$ (i.e., $\bar \E_{g(a^*)}[\tau_{a^*}]$, calculated by using Lemma~\ref{lm:g3}). Since in the right panel of Fig.~\ref{fig:astar} we have already seen that $a^* - g(a^*)$ decreases with $\epsilon$, then it is not surprising to observe in Fig.~\ref{fig:wait} that the expected waiting time between two consecutive tests also decreases when $\epsilon$ increases. That is,  the observer with lower accuracy will test the system more often than the observer with higher accuracy.
In the right panel of Fig.~\ref{fig:wait}, we plot $\E_\pi[\tau^*_{N_Y}]$ with $\pi = 0$, which is the expected time needed to detect the change in drift and can be calculated by~\eqref{eq:wald}. We see that $\E_0[\tau^*_{N_Y}]$ increases with $\epsilon$, suggesting that the observer with higher accuracy can detect and confirm the change faster than the observer with lower accuracy.
We have also tried other values for $\pi$ and made the same observation.

\begin{figure}
\centerline{\includegraphics[width=0.99\linewidth]{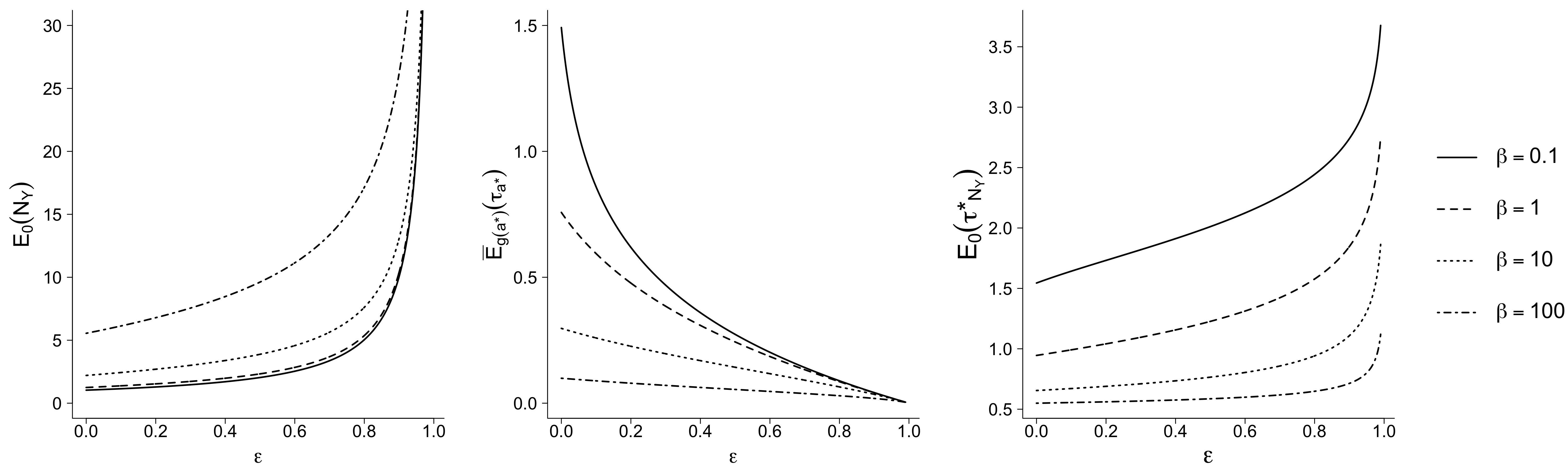}}
\caption{Optimal stopping times for $\lambda = 2$, $\gamma = 0.5$ and $\epsilon \in [0, 0.99]$.  The line type indicates the choice of $\beta$.
    Left: $\E_0[N_Y] = ( (1-\epsilon) a^* )^{-1}$, the expected number of tests needed to detect the drift-change;
    middle:  $\bar{\E}_{g(a^*)}[\tau_{a^*}]$,
    the expected waiting time between consecutive tests; right: $\E_0[\tau^*_{N_Y}]$, the expected time of the drift-change detection, with $\pi = 0$.
    } \label{fig:wait}
\end{figure}

\subsection{Analysis of the special case with \texorpdfstring{$\epsilon = 0$}{epsilon=0}} 

The last result we prove focuses on the special case with $\epsilon = 0$ and illustrates the relationship between our problem and the classical formulation with cost function~\eqref{eq:classical-cost}.
Without loss of generality, we assume $\alpha = 1$ in~\eqref{eq:classical-cost} and by a conditioning argument, we obtain the Bayesian formulation for the  value function for the classical quickest detection problem~\cite[Ch. 22]{peskir2006optimal}:
\begin{align*}
    V_{\rm{c}}(\pi ) = \inf_{\tau \in \cT (\bar{\cF}_t) } \E_\pi \left[  1 - \bar{\Pi}_\tau  + \beta \int_0^\tau \bar{\Pi}_t\ud t   \right],
\end{align*}
It is optimal  to stop whenever $\bar{\Pi}_t$ exceeds a threshold $a^*_{\rm{c}} \in (0, 1)$ such that $\psi(a^*_{\rm{c}}) = -1$~\cite[Sec.\ 22, Thm.\ 22.1]{peskir2006optimal}.
By~\eqref{eq:hitting-Psi},  for $\pi \leq a^*_{\rm{c}}$, we have
\begin{equation}\label{eq:value-c}
    V_{\rm{c}}(\pi ) =  (1 - a^*_{\rm{c}}) + \left\{ \Psi(\pi) - \Psi( a^*_{\rm{c}} ) \right\}.
\end{equation}
For comparison, let $V_0(\pi )$ denote the value function for the recursive quickest detection problem \eqref{eq:def.v} with $\epsilon = 0$, and let $a^*_0$ denote the corresponding optimal stopping boundary. By~\eqref{eq:hatV-alter}, for $\pi \leq  a^*_0$,
\begin{equation}\label{eq:value-0}
V_0(\pi ) =   \frac{1}{ a^*_0} + \left\{ \Psi(\pi) -  \frac{1 }{ a^*_0 } \Psi(a^*_0) \right\}.
\end{equation}
While the functions~\eqref{eq:value-c} and~\eqref{eq:value-0} are not directly comparable, after a suitable scaling of the model parameter $\beta$, the two stopping boundaries $a^*_{\rm{c}}, a^*_0$ coincide, and the two value functions can be obtained from each other by a simple linear transformation.
This connection is formally established in the next proposition, where we write $V_{\rm{c}}(\pi; \beta), V_{0}(\pi; \beta), a^*_{\rm{c}}(\beta), a^*_0(\beta), \Psi(\pi; \beta)$ to emphasize the dependence on $\beta$.

\begin{proposition}\label{prop:recur2classical}
Given $\beta>0$, it holds $V_0(\pi; \beta) = 1+V_0(0;\beta)V_{\rm{c}}(\pi; \tilde{\beta})$
 and $a^*_{\rm{c}}(\tilde{\beta}) = a^*_0(\beta)$
with $\tilde{\beta} = \beta / V_0(0; \beta)$.
\end{proposition}
\begin{proof}
By the definition of $V$ in \eqref{eq:def.v}, recalling $g_0(\pi)=0$ from \eqref{eq:def-g} and using $\tilde{\beta} = \beta / V_0(0; \beta)$,
\begin{align*}
V_0(\pi; \beta ) &=  \inf_{\tau \in \cT (\bar{\cF}_t) } \E_\pi \Big[ 1 +  \beta \int_0^\tau \Pi_t \ud t + (1 - \Pi_\tau) V_0(0; \beta)   \Big]  \\
&= 1 + V_0(0; \beta) \inf_{\tau \in \cT (\bar{\cF}_t) } \E_\pi \Big[\frac{\beta}{V_0(0; \beta)} \int_0^\tau \Pi_t\ud t  + 1 - \Pi_\tau \Big]  \\
&= 1 + V_0(0; \beta)  V_{\rm{c}}(\pi; \tilde{\beta}).
\end{align*}
Equivalence of the optimal stopping rules is now clear because
\[
V_0(\pi; \beta ) = 1+(1-\pi)V_0(0;\beta)\iff V_{\rm{c}}(\pi;\tilde\beta)=1-\pi
\]
shows equivalence of the stopping sets.  (One can also verify $a^*_{\rm{c}}(\tilde{\beta}) = a^*_0(\beta)$ by directly comparing the equations they must satisfy.)
\end{proof}

\section{Discussion and further generalization} \label{sec:disc}

The above analysis of the special case with $\epsilon = 0$ illustrates that the cost function~\eqref{eq:classical-cost} for the classical quickest detection problem and our cost function~\eqref{eq:our-cost}  are quite similar, in the sense that both penalize delay in detection and  stopping before the drift occurs.  Allowing for $\epsilon\in(0,1)$, our cost function takes into account the possibility of false negatives, which is the primary motivation of this work.
{We acknowledge that the formulation studied in this paper is  simplified relative to many real-world applications, which makes it possible to derive an explicit solution. This model also serves as a useful benchmark for possible, more complex extensions, which we now discuss to conclude the paper.}

A natural extension is to combine the two cost functions to allow for both ``inspections'' and ``declaration'' of the drift. This can be useful in some scenarios where system inspections are costly and one may prefer to simply assume the drift has occurred; in the context of the COVID-19 example, this could correspond to initiating the treatment without confirming the disease through testing.
Mathematically, the optimizer can choose a sequence of nondecreasing  random times $(\tau_j)_{j \geq 1}$, and at each $\tau_k$ the optimizer can take an action $d_k \in \{0, 1\}$. If $d_k = 0$,  an inspection is performed on the system and the optimizer observes $Z_k \in \{0, 1\}$ (the distribution of $Z_k$ is the same as in our problem formulation), and if $d_k = 1$, the optimizer declares that the drift has occurred and ends the optimization  (without inspection).
The goal is to find the optimal policy $(\tau_j, d_j)_{j \geq 1}$ that minimizes the cost function
\begin{equation}\label{eq:combined-cost}
\alpha \P_\pi (\tau_N < \theta) +    \E_\pi\Big[ \beta  (\tau_N -\theta)^+\Big]
   + \E_\pi\Big[\nu \sum\nolimits_{j=1}^N  1_{\{ d_j = 0\} } \Big],
\end{equation}
where
\begin{equation}
    N = \min\{k\in\N \colon  (1 - d_k) Z_k = 1 \text{ or } d_k = 1 \},
\end{equation}
$\alpha > 0$ measures the cost of a false declaration of the drift, and $\nu > 0$ measures the cost of each inspection.

If $d_N = 0$,   the optimization ends because the drift is detected after an inspection. In that case $\P (\tau_N < \theta)  = 0$.  If instead $d_N=1$, the optimization ends because a drift has been declared without inspection. In that case $\P (\tau_N < \theta) $ may be strictly positive.
The classical quickest detection problem can be seen as a special case with $\nu\rightarrow \infty$, because each inspection becomes prohibitively expensive. In contrast, our formulation is a special case with  $\alpha \rightarrow \infty$, i.e., when the cost of mistakenly declaring a drift is infinite. A comprehensive investigation into this problem is left for future research.

The cost function~\eqref{eq:combined-cost} also enables us to further generalize the problem by allowing for false positives. That is, we can allow both probabilities
\[
\P_\pi(Z = 1 \mid \theta > \tau)\ \text{ and }\ \P_\pi(Z = 0 \mid \theta \leq \tau)
\]
to be strictly positive, where $Z$ denotes the inspection outcome at  time $\tau$.
The cost function~\eqref{eq:combined-cost} can still be used, and in this case, even if the optimization stops due to a positive inspection outcome, the false declaration probability $\P_\pi (\tau_N < \theta)$ is not zero.
The methodology employed in this paper may  perhaps be reused to address this problem, though the solution is likely to be much more complicated to characterize.
{Other possible extensions include incorporating costly observations, as in Bayraktar {\rm et al.}~\cite{BEG2022}, where the optimizer pays a fee each time the signal process $X$ is observed,
and introducing budget constraints that limit the total number of inspections or the total expenditure that can be incurred.
Such extensions would further enrich the model by capturing realistic resource limitations, which we  leave for future research.
}

\section*{Acknowledgements}
 We would like to thank the anonymous reviewers for valuable feedback, especially for suggesting the alternative problem formulation that we discussed in Section~\ref{sec:disc}.
The work of J.\ Garg and Q.\ Zhou was partially supported by the National Science Foundation through grants DMS-2245591, DMS-2311307. T.\ De Angelis was partially supported by EU -- Next Generation EU -- PRIN2022 (2022BEMMLZ) CUP: D53D23005780006 and by PRIN-PNRR2022 (P20224TM7Z) CUP: D53D23018780001.

\renewcommand{\thesection}{\Alph{section}} 
\setcounter{section}{0} 
\section{Appendix}
\subsection{Proof of Proposition~\ref{prop:dynamics}\label{sec:proof.dynamics}}
\begin{proof}
Fix $k$ and let $A\in\cG^k_{\tau_k\vee t}$. Then, using the definition of $\Pi^k$ and of the measure $\P_\pi$, we obtain that
\begin{align*}
&\E_\pi\big[1_A\Pi^k_{\tau_k\vee t}\big]=\E_\pi\big[1_A1_{\{\theta\le \tau_k\vee t\}}\big]\\
&=\pi\E^0\big[1_A\big]+(1-\pi)\int_0^\infty\lambda \e^{-\lambda s}\E^s\big[1_A1_{\{\theta\le \tau_k\vee t\}}\big]\ud s\\
&=\E_\pi\Big[1_A\Big(\pi\frac{\ud \P^0}{\ud \P_\pi}\Big|_{\cG^k_{\tau_k\vee t}}+(1-\pi)\int_0^{\tau_k\vee t}\lambda \e^{-\lambda s}\frac{\ud \P^s}{\ud \P_\pi}\Big|_{\cG^k_{\tau_k\vee t}}\ud s\Big)\Big].
\end{align*}
Thus, $\P_\pi$-a.s.
\[
\Pi^k_{\tau_k\vee t}=\pi\frac{\ud \P^0}{\ud \P_\pi}\Big|_{\cG^k_{\tau_k\vee t}}+(1-\pi)\int_0^{\tau_k\vee t}\lambda \e^{-\lambda s}\frac{\ud \P^s}{\ud \P_\pi}\Big|_{\cG^k_{\tau_k\vee t}}\ud s.
\]
By analogous arguments, for $t>0$
\begin{align*}
&\E_\pi\big[1_A(1-\Pi^k_{\tau_k\vee t})\big]=\E_\pi\big[1_A1_{\{\theta> \tau_k\vee t\}}\big]\\
&=(1-\pi)\int_0^\infty\lambda \e^{-\lambda s}\E^s\big[1_A1_{\{\theta > \tau_k\vee t\}}\big]\ud s\\
&=\E_\pi\Big[1_A(1-\pi)\int_{\tau_k\vee t}^\infty \lambda \e^{-\lambda s}\frac{\ud \P^s}{\ud \P_\pi}\Big|_{\cG^k_{\tau_k\vee t}}\ud s\Big]\\
&=\E_\pi\Big[1_A(1-\pi)\frac{\ud \P^\infty}{\ud \P_\pi}\Big|_{\cG^k_{\tau_k\vee t}}\int_{\tau_k\vee t}^\infty \lambda \e^{-\lambda s}\ud s\Big]\\
&=\E_\pi\Big[1_A(1-\pi)\frac{\ud \P^\infty}{\ud \P_\pi}\Big|_{\cG^k_{\tau_k\vee t}}\e^{-\lambda (\tau_k\vee t)}\Big],
\end{align*}
where in the third equality we use that $\P^t|_{\cG^k_t}=\P^s|_{\cG^k_t}=\P^\infty|_{\cG^k_t}$ for any $t\le s$. Thus, $\P_\pi$-a.s.
\[
1-\Pi^k_{\tau_k\vee t}=(1-\pi)\frac{\ud \P^\infty}{\ud \P_\pi}\Big|_{\cG^k_{\tau_k\vee t}}\e^{-\lambda (\tau_k\vee t)}.
\]

In order to find the dynamics of $\Pi^k$ we start by determining $\Pi^k_{\tau_k}$, separately on the two events $\{Y_{\tau_k}=0\}$ and $\{Y_{\tau_k}>0\}$. First we notice that for any $A\in\cG^k_{\tau_k}$
\[
\E_\pi\big[1_A1_{\{Y_{\tau_k}>0\}}\Pi^k_{\tau_k}\big]=\E_\pi\big[1_A1_{\{Y_{\tau_k}>0\}}1_{\{\theta\le \tau_k\}}\big]=\E_\pi\big[1_A1_{\{Y_{\tau_k>}0\}}\big],
\]
so that $\Pi^k_{\tau_k}=1$, $\P_\pi$-a.s.\ on the event $\{Y_{\tau_k}>0\}$. Next we want to calculate the expressions $\E_\pi\big[1_A1_{\{Y_{\tau_k}=0\}}\Pi^k_{\tau_k}\big]$. Notice that $\cG^k_{\tau_k}$ is generated by sets of the form $F\cap G$ with $F\in\cG^{k-1}_{\tau_k}$ and $G\in\sigma(Z_k)$. Then, in particular, it is sufficient to use such sets of the form $A=F\cap G$. Given that $\sigma(Z_k)$ is finite  and $\{Y_{\tau_k}=0\}\subset\{Z_k=0\}$  we only need to consider $A=F\cap\{Z_k=0\}$, which yields
\begin{align}
\E_\pi\big[1_A1_{\{Y_{\tau_k}=0\}}\Pi^k_{\tau_k}\big]=\E_\pi\big[1_F 1_{\{Y_{\tau_k}=0\}}\Pi^k_{\tau_k}\big]
\end{align}
for $F\in\cG^{k-1}_{\tau_k}$. Then, since $\cG^{k-1}_{\tau_k} \subset \cG^{k}_{\tau_k}$, the tower property yields
\begin{align}\label{eq:change}
\begin{aligned}
&\E_\pi\big[1_F 1_{\{Y_{\tau_k}=0\}}\Pi^k_{\tau_k}\big]=\E_\pi\big[1_F 1_{\{Y_{\tau_k}=0\}}1_{\{\theta\le \tau_k\}}\big]\\
&=\E_\pi\big[1_F \P_\pi\big(Y_{\tau_k}=0, \theta\le \tau_k\big|\cG^{k-1}_{\tau_k}\big)\big] \\
&=\E_\pi\big[1_F1_{\{Y_{\tau_{k-1}}=0\}}\P_\pi\big(Z_k=0,\theta\le \tau_k\big|\cG^{k-1}_{\tau_k}\big)\big]\\
&=\E_\pi\big[1_F1_{\{Y_{\tau_{k-1}}=0\}}\P_\pi(Z_k=0\big|\cG^{k-1}_{\tau_k})\P_\pi\big(\theta\le \tau_k\big|\cG^{k-1}_{\tau_k},Z_k=0\big)\big].
\end{aligned}
\end{align}

The expression under expectation can be calculated by first noticing that
\begin{equation}\label{eq:Pi.k.tau.k}
\begin{aligned}
   &\P_\pi( \theta \leq \tau_k |\cG^{k-1}_{\tau_k}, Z_k=0) \\ 
   &= \frac{ \P_\pi( \theta \leq \tau_k, Z_k=0 |\cG^{k-1}_{\tau_k}) }{  \P_\pi(Z_k=0|\cG^{k-1}_{\tau_k}) } \\
    &= \frac{ \P_\pi( Z_k=0 |\cG^{k-1}_{\tau_k},\theta \leq \tau_k) \P_\pi(\theta \leq \tau_k|\cG^{k-1}_{\tau_k} ) }{\P_\pi(Z_k=0|\cG^{k-1}_{\tau_k})}=\frac{\epsilon\Pi^{k-1}_{\tau_k}}{\P_\pi(Z_k=0|\cG^{k-1}_{\tau_k})},
\end{aligned}
\end{equation}
where the final expression holds by \eqref{eq:false.negative} and the definition of $\Pi^{k-1}$.
For the denominator of the expression above we use
\begin{align}\label{eq:YkGk}
\begin{aligned}
\P_\pi(Z_{k}=0|\cG^{k-1}_{\tau_k})&=\P_\pi(\theta\le \tau_k, Z_{k}=0|\cG^{k-1}_{\tau_k})+\P_\pi( \theta> \tau_k, Z_{k}=0|\cG^{k-1}_{\tau_k})\\
&= \epsilon\Pi^{k-1}_{\tau_k} +\P_\pi(\theta> \tau_k|\cG^{k-1}_{\tau_k})\\
&=1-(1-\epsilon)\Pi^{k-1}_{\tau_k},
\end{aligned}
\end{align}
where the second equality uses  the numerator in  \eqref{eq:Pi.k.tau.k} and $\{\theta>\tau_k\}\subset\{Z_k=0\}$,
and the final expression holds by definition of $\Pi^{k-1}$.
Combining \eqref{eq:Pi.k.tau.k} and \eqref{eq:YkGk} we obtain that
\begin{align}\label{eq:gmeaning}
\P_\pi( \theta \leq \tau_k |\cG^{k-1}_{\tau_k}, Z_k=0)=g\big(\Pi^{k-1}_{\tau_k}\big).
\end{align}
Substituting into the final line of \eqref{eq:change} we obtain
\begin{equation}
\begin{aligned}
&\E_\pi\big[1_F 1_{\{Y_{\tau_k}=0\}}\Pi^k_{\tau_k}\big] =\E_\pi\big[1_F1_{\{Y_{\tau_{k-1}}=0\}}\P_\pi(Z_k=0\big|\cG^{k-1}_{\tau_k})g\big(\Pi^{k-1}_{\tau_k}\big)\big]\\
&=\E_\pi\big[1_F 1_{\{Y_{\tau_k}=0\}}g\big(\Pi^{k-1}_{\tau_k}\big)\big],
\end{aligned}
\end{equation}
where the final equality holds because $F\cap\{Y_{\tau_k-1}=0\}\in\cG^{k-1}_{\tau_k}$ and $\{Y_{\tau_k-1}=0\}\cap\{Z_k=0\}=\{Y_{\tau_k}=0\}$.
Thus we conclude that $\Pi^k_{\tau_k}=g(\Pi^{k-1}_{\tau_k})$, $\P_\pi$-a.s.\ on the event $\{Y_{\tau_k}=0\}$.

Next, in order to obtain \eqref{eq:Pik} we follow the usual route and introduce the likelihood ratio
\[
\Phi^k_{t\vee\tau_k}=\frac{\Pi^k_{t\vee\tau_k}}{1-\Pi^k_{t\vee\tau_k}}=\e^{\lambda(\tau_k\vee t)}\frac{\ud \P^0}{\ud \P^\infty}\Big|_{\cG^k_{t\vee\tau_k}}\Big(\phi+\int_0^{t\vee\tau_k}\lambda \e^{-\lambda s}\frac{\ud \P^s}{\ud \P^0}\Big|_{\cG^k_{t\vee\tau_k}}\ud s\Big),
\]
where $\phi=\pi(1-\pi)^{-1}$. By construction we have $\Phi^k_{\tau_k}=\infty$ on $\{Y_{\tau_k}>0\}$ and
\begin{align}\label{eq:Phi0}
1_{\{Y_{\tau_k}=0\}}\Phi^k_{\tau_k}=1_{\{Y_{\tau_k}=0\}}\frac{g\big(\Pi^{k-1}_{\tau_k}\big)}{1-g\big(\Pi^{k-1}_{\tau_k}\big)}.
\end{align}
It is not difficult to show by Girsanov's theorem that for any $t\ge 0$ and any $k\in\N\cup\{0\}$
\[
\frac{\ud \P^0}{\ud \P^\infty}\Big|_{\cG^k_t}=\exp\Big(\frac{\mu}{\sigma^2}\big(X_t-X_0\big)-\frac{\mu^2}{2\sigma^2}t\Big)=:M_t.
\]
Similarly, for $s\in[0,t]$,
\[
\frac{\ud \P^0}{\ud \P^s}\Big|_{\cG^k_t}=\exp\Big(\frac{\mu}{\sigma^2}\big(X_s-X_0\big)-\frac{\mu^2}{2\sigma^2}s\Big)=M_s.
\]
Using these expressions in the formula for $\Phi^k$ we arrive at
\begin{align}
\Phi^k_{t\vee\tau_k}=e^{\lambda (t\vee\tau_k)}M_{t\vee\tau_k}\Big(\phi+\int_0^{t\vee\tau_k}\lambda \e^{-\lambda s}\frac{1}{M_s}\ud s\Big).
\end{align}
By It\^o's formula
\[
\ud M_t=\frac{\mu}{\sigma^2}M_t\ud X_t
\]
and therefore, on the event $\{Y_{\tau_k}=0\}$
\[
\Phi^k_{t\vee\tau_k}=\frac{g\big(\Pi^{k-1}_{\tau_k}\big)}{1-g\big(\Pi^{k-1}_{\tau_k}\big)}+\int_{\tau_k}^{t\vee \tau_k}\lambda\big(1+\Phi^k_s\big)\ud s+\int_{\tau_k}^{t\vee \tau_k}\frac{\mu}{\sigma^2}\Phi^k_s\ud X_s
\]
and
\[
\Pi^k_{t\vee\tau_k}=g\big(\Pi^{k-1}_{\tau_k}\big)+\int_{\tau_k}^{t\vee \tau_k}\lambda\big(1-\Pi^k_s\big)\ud s+\int_{\tau_k}^{t\vee \tau_k}\frac{\mu}{\sigma^2}\Pi^k_s\big(1-\Pi^k_s\big)\big(\ud X_s-\mu \Pi^k_s\ud s\big).
\]

It remains to show that
\[
\widetilde W^k_t:=W^k_{\tau_k+t}-W^k_{\tau_k}=\frac{1}{\sigma}\int_{\tau_k}^{\tau_k+t}\big(\ud X_s-\mu \Pi^k_s\ud s\big),
\]
is indeed a Brownian motion with respect to the filtration $(\cG^k_{\tau_k+t})_{t\ge 0}$  under $\P_\pi$.
As usual this will be accomplished via L\`evy's theorem \cite[Thm.\ 3.3.16]{karatzas2012brownian}.

It is clear that $\widetilde W^k_0=0$ and that $\E_\pi[|\widetilde W^k_t|]<\infty$ for all $t\ge 0$.
Moreover,
\[
\langle \widetilde W^k\rangle_t=\sigma^{-2}\langle X_{\tau_k+\cdot}-X_{\tau_k}\rangle_t=t
\]
and it only remains to check the martingale property. Take $s<t$:
\begin{equation}
\begin{aligned}
\E_\pi\big[\widetilde W^k_t-\widetilde W^k_s\big|\cG^k_{\tau_k+s}\big]=&\frac{1}{\sigma}\E_\pi\Big[\int_{\tau_k+s}^{\tau_k+t}\Big(\ud X_r-\mu\P_\pi\big(\theta\le r|\cG^k_{r}\big)\ud r\Big)\Big|\cG^k_{\tau_k+s}\Big]\\
=&\frac{1}{\sigma}\E_\pi\Big[\int_{\tau_k+s}^{\tau_k+t}\Big(\ud X_r-\mu1_{\{\theta\le r\}}\ud r\Big)\Big|\cG^k_{\tau_k+s}\Big]\\
=&\E_\pi\Big[B_{\tau_k+t}-B_{\tau_k+s}\Big|\cG^k_{\tau_k+s}\Big],
\end{aligned}
\end{equation}
where the second equality is by tower property and observing that $\{\tau_k+s\le r\}$ and $\{\tau_k+t>r\}$ are both $\cG^k_r$-measurable.
Increments of the Brownian motion $B$ after time $\tau_k+s$ are independent of the history of the processes $X$ and $Y$ and independent of $\theta$. Then $\E_\pi\big[B_{\tau_k+t}-B_{\tau_k+s}\big|\cG^k_{\tau_k+s}\big]=0$, concluding our proof. 
\end{proof}

\subsection{Proof of \texorpdfstring{$V(\pi)=\widetilde V^k(\pi)$}{v(pi)=vk(pi)}  }\label{app:VV}
\begin{proof}
Fix an admissible sequence $(\tau_n)_{n=0}^\infty\in\cT_*$. For any $k$, letting $\widetilde\Pi^k_t:=\Pi^{k}_{\tau_k+ t}$, $\widetilde W^k_t:=W^k_{\tau_k+t}-W^k_{\tau_k}$ and $\widetilde \cG^k_t:=\cG^k_{\tau_k+t}$, the process $((\widetilde\Pi^k_t,\widetilde \cG^k_{t})_{t\ge 0}, \P_\pi)$ is a strong Markov process. In particular, when $\widetilde\Pi^k_0=\pi$ for some $\pi\in[0,1]$ we have $\widetilde\Pi^k_t=f(t,\pi,\widetilde W^k_\cdot)$ for some measurable non-anticipative mapping $f:\R_+\times[0,1]\times \cC(\R_+)\to [0,1]$, because \eqref{eq:Pik} admits a unique strong solution. It is worth noticing that the function $f$ is the same for all $k$'s because the structure of the SDE in \eqref{eq:Pik} is the same for all $k$'s. By the same observation it also follows that $\bar \Pi_t=f(t,\pi,\bar W_t)$. Therefore, for any $\pi,\pi'\in[0,1]$
\begin{align}\label{eq:laws}
\mathsf{Law}^{\P_\pi}\Big((\widetilde \Pi^k_t)_{t\ge 0}\big|\widetilde\Pi^k_0=\pi'\Big)=\mathsf{Law}^{\bar\P}\Big((\bar \Pi_t)_{t\ge 0}\big|\bar\Pi_0=\pi'\Big).
\end{align}

By strong Markov property of $\widetilde\Pi^k$, there exists a Borel measurable function
\[
\widetilde V^k\colon[0,1]\to [0,\infty)
\]
for which
\begin{align}\label{eq:def.vk}
\begin{aligned}
&\essinf_{\substack{\tau\in\cT(\cG^k_t)
\tau\ge\tau_k}} \E_\pi \Big[\beta\int_{\tau_k}^\tau \Pi^k_t\ud t  + (\cA V) \big( \Pi^k_\tau\big)\Big|\cG^k_{\tau_k} \Big]\\
&=\essinf_{\widetilde\tau\in\cT(\widetilde\cG^k_t)
} \E_\pi \Big[\beta\int_{0}^{\widetilde\tau} \widetilde\Pi^k_t\ud t  + (\cA V) \big( \widetilde\Pi^k_\tau\big)\Big|\widetilde\cG^k_{0} \Big]=\widetilde V^k(\widetilde\Pi^k_{0}).
\end{aligned}
\end{align}
Notice that on the event $\{Y_{\tau_k}=0\}$ it holds $\widetilde V^k(\Pi^k_{\tau_k})=\widetilde V^k\big(g(\Pi^{k-1}_{\tau_k})\big)$.
The function, $\widetilde V^k$, is constructed in \cite[Ch.\ 3, Secs.\ 2 and 3]{shiryaev2007optimal} (see Lemma~3 and Theorem~1 therein) as the largest measurable function $\Gamma:[0,1]\to[0,\infty)$ dominated by $\cA V$ (i.e., $\Gamma(\pi)\le(\cA V)(\pi)$ for $\pi\in[0,1]$) that satisfies the sub-harmonic property
\begin{align}\label{eq:subh1}
\Gamma(\widetilde \Pi^k_{\tau})\le \E_\pi\Big[\Gamma(\widetilde \Pi^k_{\sigma})+\beta\int_{\tau}^{\sigma}\widetilde\Pi^k_s\ud s\Big|\widetilde\cG^k_\tau\Big]
\end{align}
for any $\tau,\sigma\in\cT(\widetilde\cG^k_t)$ such that $0\le \tau\le \sigma$. Analogously, the value function $V$ of \eqref{eq:def.v} is the largest measurable function dominated by $\cA V$ that satisfies the sub-harmonic property
\begin{align}\label{eq:subh2}
V(\bar \Pi_{\tau})\le \bar \E_\pi\Big[V(\bar \Pi_{\sigma})+\beta\int_{\tau}^{\sigma}\bar\Pi_s\ud s\Big|\bar\cF_\tau\Big].
\end{align}

We can restrict the class of stopping times to the class of entry times to Borel sets \cite[Ch.\ 3, Sec.\ 3, Thm.\ 3]{shiryaev2007optimal}. Then, the law of the underlying process completely determines $\widetilde V^k$ and $V$ via conditions \eqref{eq:subh1} and \eqref{eq:subh2}. Since \eqref{eq:laws} holds, maximality of the sub-harmonic function dominated by $\cA V$ yields
\begin{align}\label{eq:VkV}
\widetilde V^k(\pi)=V(\pi),\quad \pi\in[0,1],
\end{align}
which completes the proof. 
\end{proof}

\subsection{Proof of Lemma~\ref{lm:psi}\label{sec:proof.lm.psi}}
\begin{proof}
Using $h'(\pi) = \rho \pi^{-2}(1 - \pi)^{-1}$ and integration by parts, we find that
\begin{align}
\rho \int_0^\pi \frac{ \e^{h(x)}}{x(1-x)^2} \ud x
=\;&  \frac{\pi \e^{h(\pi)}}{1 - \pi} - \int_0^\pi \frac{\e^{h(x)}}{(1-x)^2} \ud x,   \label{eq:psi1.1} \\
\int_0^\pi \frac{\e^{h(x)}}{(1-x)^2} \ud x =\;&   \frac{\pi^2e^{h(\pi)}}{\rho(1 - \pi)} - \int_0^\pi \frac{\e^{h(x)}}{\rho }  \left[ \frac{1}{(1-x)^2} - 1 \right] \ud x.  \label{eq:psi1.2}
\end{align}
Since $\pi \in (0, 1)$, all integrals in the above equalities are positive. Hence, using $\rho = \lambda / \gamma$, we get
\begin{equation}\label{eq:psi.bound1}
  -\frac{\pi}{1 - \pi}  < \frac{\lambda}{\beta} \psi(\pi) = -   \rho \e^{- h (\pi) } \int_0^\pi \frac{\e^{h(x)}}{x(1-x)^2} \ud x < -\frac{\pi}{1 - \pi}  + \frac{\pi^2}{\rho (1 - \pi)},
\end{equation}
which are slightly looser than the bounds stated in the lemma. To sharpen the bounds, observe that for any  $\pi \in (0, 1)$,
\begin{align}
    & \int_0^\pi \frac{\e^{h(x)}}{(1-x)^2}  < \int_0^\pi \frac{\e^{h(x)}}{x (1-x)^2} \ud x, \label{eq:psi.b1} \\
    & \int_0^\pi \frac{\e^{h(x)}}{\rho } \left[ \frac{1}{(1-x)^2} - 1 \right]  \ud x
    < \rho^{-1} \int_0^\pi \frac{\e^{h(x)}}{ (1-x)^2}  \ud x.  \label{eq:psi.b2}
\end{align}
Plugging~\eqref{eq:psi.b1} into~\eqref{eq:psi1.1} and~\eqref{eq:psi.b2} into~\eqref{eq:psi1.2}, we obtain
\begin{align}
    \rho \int_0^\pi \frac{ \e^{h(x)}}{x(1-x)^2} >\;&  \frac{\rho}{1 + \rho} \frac{\pi \e^{h(\pi)}}{1 - \pi}, \label{eq:psi.b3} \\
    \int_0^\pi \frac{\e^{h(x)}}{(1-x)^2} \ud x >\;& \frac{1}{ 1 + \rho }  \frac{\pi^2   \e^{h(\pi)}}{1-\pi}. \label{eq:psi.b4}
\end{align}
Plugging~\eqref{eq:psi.b4} into~\eqref{eq:psi1.1}, we get
\begin{equation}\label{eq:psi.b5}
\rho \int_0^\pi \frac{ \e^{h(x)}}{x(1-x)^2} \ud x = \frac{\pi \e^{h(\pi)}}{1 - \pi} - \int_0^\pi \frac{\e^{h(x)}}{(1-x)^2} \ud x
< \frac{\pi \e^{h(\pi)}}{1 - \pi} \left( 1 - \frac{\pi}{1 + \rho} \right),
\end{equation}
The upper bound stated in the lemma follows from~\eqref{eq:psi.b3} and the lower bound follows from~\eqref{eq:psi.b5}.

Letting $\pi \rightarrow 1$, we find that
\[\lim_{\pi \uparrow 1}  \frac{ \lambda  }{ \beta}(1 - \pi) \psi(\pi)  = - \frac{ \rho }{ 1 + \rho} = -\frac{\lambda}{\lambda + \gamma},\]
from which the second claim follows.
\end{proof}

\subsection{Proof of Lemma~\ref{lm:u2}\label{sec:proof.lm.u2}}
\begin{proof}
Plugging $\Psi(\pi) = \int_0^\pi \psi(x) dx$ into the ODE in \eqref{eq:c1} and solving for $\psi'$ we obtain
\begin{align} \label{eq:def.psi1}
\psi'(\pi) =\;& - \frac{\beta \pi + \lambda(1 - \pi) \psi (\pi)}{\gamma  \pi^2 (1 - \pi)^2 }.
\end{align}
Plugging the bounds in~\eqref{eq:psi.bound1} into~\eqref{eq:def.psi1}, we get the desired bounds for $\psi'$.
\end{proof}

\subsection{Proof of Lemma~\ref{lm:key}\label{sec:proof.lm.key}}

\begin{proof}
Consider the first inequality.
Fix an arbitrary $\pi \in (0, 1)$ and set
\begin{align*}
f(\epsilon) = (1 - \epsilon) \beta \pi + \lambda (1 - \pi) [ \psi(\pi) - \psi(g_\epsilon(\pi))].
\end{align*}
Since $g_\epsilon(\pi) = \pi$ when $\epsilon = 1$, we have $f(1) = 0$.
Then it suffices to prove that $f'(\epsilon) < 0$ for $\epsilon \in (0, 1)$.
A direct calculation yields
\begin{align*}
f'(\epsilon) =\;&  - \beta \pi - \lambda (1 - \pi) \psi'( g_\epsilon(\pi) ) \frac{\partial g_\epsilon (\pi)}{\partial \epsilon} \\
=\;&  - \beta \pi - \lambda \psi'( g_\epsilon(\pi) )  \frac{  \pi(1 - \pi)^2 }{ [ 1 - (1 - \epsilon) \pi]^2 } \\
< \;&  - \beta \pi  +   \frac{ \beta \pi(1 - \pi)^2 }{ [ 1 - (1 - \epsilon) \pi]^2 [ 1 - g_\epsilon(\pi) ]^2} = 0,
\end{align*}
where the strict inequality follows from Lemma~\ref{lm:u2} and the final equality follows from the definition of $g_\epsilon$, which yields the equality
\[1 - g_\epsilon(\pi) = \frac{ 1 - \pi } {1 - (1 - \epsilon)\pi}.\]

To prove the second asserted inequality, we use~\eqref{eq:def.psi1} to get
\begin{align} \label{eq:key1}
    \psi(\pi) = -  \frac{ \gamma \pi^2 (1 - \pi)^2 \psi'(\pi) + \beta \pi }{\lambda (1 - \pi)},
\end{align}
\begin{align}\label{eq:key2}
    \psi(g(\pi)) = -  \frac{ \gamma g(\pi)^2 (1 - g(\pi))^2 \psi'(g(\pi)) + \beta g(\pi) }{\lambda (1 - g(\pi))} 
    =     - \frac{ \frac{ \gamma \epsilon^2  \pi^2 (1 - \pi)^2 \psi'(g(\pi)) }{[1 - (1 - \epsilon)\pi ]^3 } + \beta \epsilon \pi  }{\lambda (1-\pi) }.
\end{align}
The proof is completed by plugging~\eqref{eq:key1} and~\eqref{eq:key2} into  $f(\epsilon) > 0$ (i.e., the first part of the lemma).
\end{proof}

\bibliographystyle{abbrv}
\bibliography{ref}

\end{document}